\documentclass[psamsfonts]{amsart}
\usepackage{amssymb,amsfonts,lacromay}
\usepackage[usenames,dvipsnames]{color}
\usepackage[all,arc]{xy}
\usepackage{enumerate}
\usepackage{mathrsfs}
\usepackage{stmaryrd}


\theoremstyle{thm}
\newtheorem{thm}{Theorem}[section]
\newtheorem{cor}[thm]{Corollary}
\newtheorem{prop}[thm]{Proposition}
\newtheorem{lem}[thm]{Lemma}

\newtheorem{quest}[thm]{Question}

\theoremstyle{definition}
\newtheorem{defn}[thm]{Definition}

\newtheorem{con}[thm]{Construction}

\newtheorem{exmp}[thm]{Example}

\newtheorem{notns}[thm]{Notations}

\newtheorem{asses}[thm]{Assumptions}
\newtheorem{rem}[thm]{Remark}

\makeatletter
\let\c@equation\c@thm
\makeatother
\numberwithin{equation}{section}

\makeatletter
\ifx\SK@label\undefined\let\SK@label\label\fi
 \let\your@thm\@thm
 \def\@thm#1#2#3{\gdef\currthmtype{#3}\your@thm{#1}{#2}{#3}}
 \def\mylabel#1{{\let\your@currentlabel\@currentlabel\def\@currentlabel
  {\currthmtype~\your@currentlabel}
 \SK@label{#1@}}\label{#1}}
 \def\myref#1{\ref{#1@}}
\makeatother

\bibliographystyle{plain}

\title{Enriched model categories and presheaf categories}

\author{Bertrand Guillou}
\email{bertguillou@uky.edu}
\address{Department of Mathematics,
University of Kentucky, 
Lexington, KY 40506}

\author{J.P. May}
\email{may@math.uchicago.edu}
\address{Department of Mathematics,
The University of Chicago,
Chicago, IL 60637}


\subjclass[2000]{ 55U35, 55P42}
\keywords{Enriched model categories, Enriched presheaf categories}

\begin{document}

\begin{abstract}
We collect in one place a variety of known and folklore
results in enriched model category theory and add a few
new twists.  The central theme is a general procedure for 
constructing a Quillen adjunction, often a Quillen equivalence, 
between a given $\sV$-model category and a category of enriched
presheaves in $\sV$, where $\sV$ is any good enriching category. 
For example, we rederive the result of
Schwede and Shipley that reasonable stable model categories
are Quillen equivalent to presheaf categories of spectra
(alias categories of module spectra) under more general
hypotheses.  The technical improvements 
and modifications of general model categorical results given 
here are applied to equivariant contexts in the
sequels \cite{GM2, GMequiv}, where we indicate various
directions of application.
\end{abstract}

\maketitle

\tableofcontents

\section*{Introduction}

The categories, $\sM$ say, that occur in nature have both hom 
sets $\sM(X,Y)$ and enriched hom objects $\ul{\sM}(X,Y)$ in 
some related category, $\sV$ say.  Technically $\sM$ is enriched 
over $\sV$.  In topology, the 
enrichment is often given simply as a topology on the set of
maps between a pair of objects, and its use is second nature.
In algebra, enrichment in abelian groups is similarly familiar
in the context of additive and Abelian categories.
In homological algebra, this becomes enrichment in chain complexes,
and the enriched categories go under the name of 
DG-categories.

Quillen's model category theory encodes homotopical algebra in 
general categories.  In and of itself, it concerns just the 
underlying category, but the relationship with the enrichment
is of fundamental importance in nearly all of the applications.

The literature of model category theory largely focuses on enrichment
in the category of simplicial sets and related categories with a 
simplicial flavor. Although there are significant technical advantages 
to working simplicially, as we shall see, the main reason for this is 
nevertheless probably more 
historical than mathematical.  Simplicial enrichment often occurs naturally, 
but it is also often arranged artificially, replacing a different naturally 
occurring enrichment with a simplicial one.  This is very natural if one's 
focus is on, for example, categories of enriched categories and all-embracing
generality.  It is not very
natural if one's focus is on analysis of, or calculations in, a particular
model category that comes with its own intrinsic enrichment.

The focus on simplicial enrichment gives a simplicial
flavor to the literature that perhaps impedes the wider dissemination
of model theoretic techniques.  For example, it can hardly be expected
that those in representation theory and other areas that deal naturally
with DG-categories will read much of the simplicially oriented model category 
literature, even though it is directly relevant to their work. 

Even in topology, 
it usually serves no mathematical purpose to enrich simplicially in situations 
in equivariant, parametrized, and classical homotopy theory that arise in nature 
with topological enrichments.  We recall a nice joke of John Baez when given a 
simplicial answer to a topological question.

\vspace{1mm}

{\footnotesize

``The folklore is fine as long as we really {\em can} interchange topological
spaces and simplicial sets. 

Otherwise it's a bit like this:

\vspace{1mm}

`It doesn't matter if you take a cheese sandwich or a ham sandwich;
  they're equally good.'

`Okay, I'll take a ham sandwich.'

`No!  Take a cheese sandwich - they're equally good.'

\vspace{1mm}

One becomes suspicious\dots''

}

\vspace{1mm}

Technically, however, there is very good reason for focusing on simplicial
enrichment: simplicity. The model category of simplicial sets enjoys special properties
that allow general statements about simplicially enriched model categories, unencumbered by
annoying added and hard to remember hypotheses that are necessary when enriching in a category 
$\sV$ that does not satisfy these properties.  Lurie \cite[A.3.2.16]{Lurie} defined the notion of an 
``excellent'' enriching category and restricted to those in his treatment \cite[A.3.3]{Lurie}
of diagram categories. In effect, that definition encodes the relevant special properties of simplicial 
sets.  None of the topological and few of the algebraic examples of interest to us are 
excellent.  These properties preclude other desirable properties.  For example, in algebra and 
topology it is often helpful to work with enriching categories in which all objects are fibrant,
whereas every object is cofibrant in an excellent enriching category.

While we also have explicit questions in mind, one of our goals is to
summarize and explain some of how model category theory works in general
in enriched contexts, adding a number of technical refinements that we need 
in the sequels \cite{GM2, GMequiv} and could not find in the literature. Many of our results appear in one form
or another in the standard category theory sources (especially Kelly \cite{Kelly}
and Borceux \cite{BorII}) and in the model theoretic work of Dugger, Hovey, Lurie, Schwede, 
and Shipley \cite{Dugger, DS, Hovey, Lurie, SS0, SS, SS2}. Although the latter papers largely 
focus on simplicial contexts, they contain the original versions and forerunners of many 
of our results. 

Cataloging the technical hypotheses needed to work with a general $\sV$ is tedious and 
makes for tedious reading. To get to more interesting things first, we follow a
referee's suggestion and work backwards.  We recall background material 
that gives the basic framework at the end. Thus we discuss enriched model 
categories, called $\sV$-model categories (see \myref{monoidal}), in general 
in \S\ref{enriched} and we discuss enriched diagram categories in  \S\ref{CatPre}.
The rest of \S\ref{diagram} gives relevant categorical addenda not used earlier. Thus
\S\ref{overV} and \S\ref{overM} describe ways of constructing maps from small $\sV$-categories 
into full $\sV$-subcategories of $\sV$ or, more generally, $\sM$, and \S\ref{mult} discusses
prospects for multiplicative elaborations of our results.  

Our main focus is the comparison between given
enriched categories and related categories of enriched presheaves.
{We are especially interested in examples where, in contrast to modules over a commutative
monoid in $\sV$, the model category $\sM$ requires more than one ``generator'', as is typical of
equivariant contexts \cite{GM2, GMequiv}.  We shall see in \cite{GMequiv} that the notion of ``equivariant 
contexts'' admits a considerably broader interpretation than just the study of group actions.}

We will discuss answers to the following questions in general terms in \S\ref{comparisons}.
They are natural variants on the theme of understanding the relationship
between model categories in general and model categories of enriched presheaves.
When $\sV$ is the category $sSet$ of simplicial sets, a version of the first 
question was addressed by Dwyer and Kan \cite{DK}.  Again when $\sV=sSet$, a 
question related to the second was addressed by Dugger \cite{Dug0, Dug}.  
When $\sV$ is the category $\SI\sS$ of symmetric spectra, the third question 
was addressed by Schwede and Shipley \cite{SS}.  
In the DG setting, an instance of the third question was addressed in \cite[\S7]{KthyDer}.

In all four questions, $\sD$ 
denotes a small $\sV$-category. The only model structure on presheaf categories
that concerns us in these questions is the projective level model structure induced from a 
given model structure on $\sV$: a map $f\colon X\rtarr Y$ of presheaves is a weak equivalence 
or fibration if and only if $f_d\colon X_d\rtarr Y_d$ is a weak equivalence or fibration for each
object $d$ of $\sD$; the cofibrations are
the maps that satisfy the left lifting property (LLP) with respect to the acyclic fibrations.
There is an evident dual notion of an injective model structure, but that will not concern us here.  
We call the projective level model structure the level model structure in this paper.

\begin{quest}\mylabel{quest1} Suppose that $\sM$ is a $\sV$-category
and $\de \colon \sD\rtarr \sM$ is a $\sV$-functor.  When can one use
$\de$ to define 
a $\sV$-model structure on $\sM$ such that $\sM$ is Quillen equivalent 
to the $\sV$-model category $\mathbf{Pre}(\sD,\sV)$ of enriched presheaves 
$\sD^{\text{op}}\rtarr \sV$?
\end{quest}

\begin{quest}\mylabel{quest2} Suppose that $\sM$ is a $\sV$-model category.  
When is $\sM$ Quillen equivalent to $\mathbf{Pre} (\sD,\sV)$, where $\sD$ is the full
$\sV$-subcategory of $\sM$ given by some well chosen set of objects $d\in \sM$?
\end{quest}

\begin{quest}\mylabel{quest3} Suppose that $\sM$ is a $\sV$-model category,
where $\sV$ is a {\em stable} model category. When is $\sM$ Quillen equivalent 
to $\mathbf{Pre} (\sD,\sV)$, where $\sD$ is the full $\sV$-subcategory of $\sM$ given 
by some well chosen set of objects $d\in \sM$?
\end{quest}

\begin{quest}\mylabel{quest4} More generally, we can ask Questions~\ref{quest2} 
and \ref{quest3}, but seeking a Quillen equivalence between 
$\sM$ and $\mathbf{Pre} (\sD,\sV)$ for some $\sV$-functor $\de\colon \sD\rtarr \sM$, 
not necessarily the inclusion of a full $\sV$-subcategory.
\end{quest}

Our answer to \myref{quest3} is  a variant of a theorem of Schwede and Shipley \cite{SS}.
It will play a central role in the sequel \cite{GM2}, where we give a convenient presheaf model for the 
category of $G$-spectra for any finite group $G$.

We are interested in \myref{quest4} since we shall see in \cite{GMequiv} that there are interesting $\sV$-model 
categories $\sM$ that are Quillen equivalent to  presheaf categories $\mathbf{Pre} (\sD,\sV)$, 
where $\sD$ is not a full subcategory of $\sM$ but, as far as we know, are not Quillen equivalent 
to a presheaf category $\mathbf{Pre} (\sD,\sV)$ for any full subcategory $\sD$ of $\sM$.  

We return to the general theory in  \S\ref{changeDM} and \S\ref{changeDMV}, 
where we give a variety of results that show 
how to change $\sD$, $\sM$, and $\sV$ without changing the Quillen equivalence 
class of the model categories we are interested in. Many of these results are technical 
variants or generalizations (or sometimes just helpful specializations) of results of 
Dugger, Hovey, Schwede, and Shipley \cite{Dugger, DS, Hovey, SS0, SS, SS2}. 
Some of these results are needed in \cite{GM2, GMequiv} and others are not, 
but we feel that a reasonably thorough compendium in one place may well be a service to others.
The results in this direction are
scattered in the literature, and they are important in applications of model category theory in a variety of contexts.  
The new notion of a tensored adjoint pair in \S\ref{tensored}
is implicit but not explicit in the literature and captures a commonly occurring phenomenon of enriched adjunction. The new notions of weakly unital $\sV$-categories and presheaves in
\S\ref{weakly} describe a phenomenon that appears categorically when the unit $\bf{I}$ of the symmetric monoidal model category $\sV$ is not cofibrant and appears topologically in 
connection with Atiyah duality, as we will explain in \cite{GM2}.

The basic idea is that $\sV$ is in practice a 
well understood model category, as are presheaf categories with values in
$\sV$. Modelling a general model category $\sM$ in terms of such a presheaf
category, with its elementary levelwise model structure, can be very useful 
in practice, as many papers in the literature make clear.  It is important
to the applications to understand exactly what is needed for such modelling
and how one can vary the model.   
We were led to our general questions by specific topological applications \cite{GM2, GMequiv},
but there are many disparate contexts where they are of interest.  

Our focus is on what all these contexts
have in common, and we shall try to make clear exactly where generalities must give way to context-specific proofs.   For applications, we want $\sD$ to be as concrete
as possible, something given or constructed in a way that makes it potentially useful for calculation rather than
just theory.  Towards this end, we find it essential to work with given
enrichments in naturally occurring categories $\sV$, rather than modifying $\sV$ for
greater theoretical convenience. 

{The reader is assumed to be familiar with basic model category theory, as in \cite{Hirsch, Hovey, MP}.  The last of these is the most
recent textbook souce. It was written at the same time as the first draft of this paper, which can be viewed as a natural sequel to the basics
of enriched model category theory as presented there.  We give full details or precise references on everything we use that is not in \cite{MP}.}

It is a pleasure to thank an anonymous referee for an especially helpful report.
This  work  was  partially  supported  by Simons Collaboration Grant No. 282316 held by the first author.

\section{Comparisons between model categories $\sM$ and $\mathbf{Pre} (\sD,\sV)$}\label{comparisons}

\subsection{Standing assumptions on $\sV$, $\sM$, and $\sD$}\label{standass}

We fix assumptions here.  We fill in background and comment on our 
choices of assumptions and notations in \S\ref{enriched} and \S\ref{CatPre}.

Throughout this paper, $\sV$ will be a bicomplete closed symmetric monoidal category that is also
a cofibrantly generated and proper monoidal model category (as specified in \cite[4.2.6]{Hovey},
 \cite[11.1.2]{Hirsch}, or \cite[16.4.7]{MP}; see \myref{monoidal} below). While it is sensible 
to require $\sV$ to be proper, we shall not make essential use of that assumption in this paper.
We write $V\otimes W$ or $V\otimes_{\sV}W$ for the product and $\ul{\sV}(V,W)$ for the internal 
hom in $\sV$, and we write $\sV(V,W)$ for the set of morphisms $V\rtarr W$ in $\sV$.  We let
$\mathbf{I}$ denote the unit object of $\sV$. We do not assume that $\mathbf{I}$ is cofibrant,
and we do not assume the monoid axiom (see \myref{monax}).
We assume given canonical sets $\mathcal{I}$ and $\mathcal{J}$ of generating cofibrations and
generating acyclic cofibrations for $\sV$.

We assume familiarity with the definitions of enriched categories, enriched functors,
and enriched natural transformations \cite{BorII, Kelly}. A brief elementary account
is given in \cite[Ch. 16]{MP} and we give some review in \S\ref{enriched} and \S\ref{CatPre}.  
We refer to these as $\sV$-categories, $\sV$-functors, and $\sV$-natural transformations. 

Throughout this paper, $\sM$ will be a bicomplete 
$\sV$-category. We explain the bicompleteness assumption in \S\ref{EnHyp}. 
We let 
$\ul{\sM}(M,N)$ denote the enriched hom object in $\sV$ between objects $M$ and $N$ of 
$\sM$.  We write $\ul{\sM}_{\sV}(M,N)$ when considering changes of enriching category.   
We write $\sM(M,N)$ for the set of morphisms $M\rtarr N$ in the underlying category of $\sM$. 
By definition,
\begin{equation}\label{homset}
\sM(M,N) = \sV(\mathbf{I},\ul{\sM}(M,N)).
\end{equation}
Bicompleteness includes having tensors and cotensors, which we denote by
\[ M\odot V \ \ \text{and} \ \  F(V,M) \]
for $M\in \sM$ and $V\in \sV$;   (\ref{biten}) gives the defining adjunctions for these objects of $\sM$.

We regard the underlying category as part of the structure of $\sM$. Philosophically, if
we think of the underlying category as the primary structure, we think of ``enriched''
as an adjective modifying the term category. If we think of the entire structure as 
fundamental, we think of ``enriched category'' as a noun (see \cite{MP} and \myref{underly}).

In fact, when thinking of it as a noun, it can sometimes be helpful to think of the underlying category 
as implicit and unimportant.  One can then think of the enrichment as specifying a 
$\sV$-category, with morphism objects $\ul{\sM}(M,N)$ in $\sV$, unit maps $\mathbf{I}\rtarr \ul{\sM}(N,N)$
in $\sV$, and a unital and associative composition law in $\sV$, but with no mention of underlying 
maps despite their implicit definition in (\ref{homset}).

We fix a small $\sV$-category $\sD$.  We then have the category $\mathbf{Pre} (\sD,\sV)$ of 
$\sV$-functors $X\colon \sD^{op}\rtarr \sV$ and $\sV$-natural transformations; we
call $X$ an enriched presheaf.  

\begin{rem}\mylabel{polemic} When considering the domain categories $\sD$ of presheaf categories, 
we are never interested in the underlying category of $\sD$ and in fact the underlying category
is best ignored.  We therefore use the notation $\sD(d,e)$ rather than $\ul{\sD}(d,e)$ for the 
hom objects in $\sV$ of the domain categories of presheaf categories.  We may issue reminders, 
but the reader should always remember this standing convention.
\end{rem}

We write $X_d$ for the object of $\sV$ that $X$ assigns to an object $d$ of $\sD$.  Then $X$ is given
by maps 
\[ X(d,e) \colon \sD(d,e) \rtarr \ul{\sV}(X_e,X_d) \]
in $\sV$.  Maps $f\colon X\rtarr Y$ of presheaves are given by maps $f_d\colon X_d\rtarr Y_d$
in $\sV$ that make the appropriate diagrams commute; see (\ref{Vnat}).  
 
\begin{rem}\mylabel{polemic2}  As we explain in \S5.1,  $\mathbf{Pre} (\sD,\sV)$
is itself the underlying category of a $\sV$-category.  We write
$\ul{\mathbf{Pre}}(\sD,\sV)(M,N)$
for the hom object in $\sV$  of morphisms of presheaves $M\rtarr N$.  It is an 
equalizer displayed in greater generality in (\ref{PXY}).  
\end{rem}

The Yoneda embedding $\bY\colon \sD\rtarr \mathbf{Pre} (\sD,\sV)$ plays an important role in the theory.

\begin{defn}\mylabel{YonNot} For $d\in \sD$, $\bY(d)$ denotes the presheaf in $\sV$ 
represented by $d$, so that $$\bY(d)_e = {\sD}(e,d);$$ 
$\bY$ is the object function of a $\sV$-functor $\bY\colon \sD\rtarr \mathbf{Pre} (\sD,\sV)$. 
Thus
$$ \bY \colon \sD(d, d') \rtarr \ul{\mathbf{Pre}}(\sD,\sV)(\bY(d),\bY(d')) $$
is a map in $\sV$ for each pair of objects $d, d'$ of $\sD$.
\end{defn}

The classical Yoneda lemma generalizes to an enriched Yoneda lemma \cite[6.3.5]{BorII}
identifying enriched natural transformations out of represented enriched functors. 
We have defined $\mathbf{Pre}(\sD,\sV)$, but we need notation for more general functor categories.

\begin{defn}  Let $\mathbf{Fun} (\sD^{op},\sM)$ denote the category of $\sV$-functors 
$\sD^{op}\rtarr \sM$ and $\sV$-natural transformations.  In particular, taking 
$\sM= \sV$,
$$\mathbf{Fun} (\sD^{op},\sV) = \mathbf{Pre} (\sD,\sV).$$
Again, as we explain in \S\ref{CatPre}, $\mathbf{Fun}(\sD^{op},\sM)$ is bicomplete and is the underlying 
category of a $\sV$-category, with hom objects displayed as equalizers in (\ref{PXY}).
\end{defn}

\begin{defn}\mylabel{Fd} Let $\text{ev}_d\colon \mathbf{Fun}(\sD^{op},\sM)\rtarr \sM$ denote the $d^{th}$ object 
$\sV$-functor, which sends $X$ to $X_d$.  Let $F_d\colon \sM\rtarr \mathbf{Fun} (\sD^{op},\sM)$ be the 
$\sV$-functor defined on objects by $F_dM = M\odot \bY(d)$, so that
\[ (F_dM)_e = M\odot \sD(e,d). \]
\end{defn}

We discuss $\sV$-adjunctions in \S\ref{EnHyp} and explain the following result in 
\S\ref{CatPre}.

\begin{prop}\mylabel{Fb} The pair $(F_d,\text{ev}_d)$ is a $\sV$-adjunction
between $\sM$ and $\mathbf{Fun} (\sD^{op},\sM)$.
\end{prop}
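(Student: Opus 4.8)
The plan is to exhibit a $\sV$-natural isomorphism
\[ \ul{\mathbf{Fun}(\sD^{op},\sM)}(F_d M, Y) \;\cong\; \ul{\sM}(M, Y_d) \]
in $\sV$, natural in $M\in\sM$ and $Y\in\mathbf{Fun}(\sD^{op},\sM)$. Since $Y_d = \text{ev}_d Y$, and since (by the definition of a $\sV$-adjunction recalled in \S\ref{EnHyp}) such an isomorphism is precisely what it means for $(F_d,\text{ev}_d)$ to be a $\sV$-adjunction, this proves the proposition; applying $\sV(\mathbf{I},-)$ and invoking (\ref{homset}) then recovers the underlying ordinary adjunction on morphism sets.

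First I would use the description, given in \S\ref{CatPre}, of the enriched hom objects of $\mathbf{Fun}(\sD^{op},\sM)$ as the end
\[ \ul{\mathbf{Fun}(\sD^{op},\sM)}(X,Y) \;=\; \int_{e\in\sD}\ul{\sM}(X_e,Y_e), \]
which exists because $\sM$ is a bicomplete $\sV$-category. Setting $X = F_d M$, so that $(F_d M)_e = M\odot\sD(e,d)$ by \myref{Fd}, and applying inside the integrand the defining $\sV$-adjunction isomorphism for tensors, $\ul{\sM}(M\odot V,N)\cong\ul{\sV}(V,\ul{\sM}(M,N))$ (natural in $V$ and $N$), I obtain a $\sV$-natural isomorphism
\[ \ul{\mathbf{Fun}(\sD^{op},\sM)}(F_d M, Y) \;\cong\; \int_{e\in\sD}\ul{\sV}\bigl(\sD(e,d),\ul{\sM}(M,Y_e)\bigr). \]

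Next I would observe that $e\mapsto\ul{\sM}(M,Y_e)$ is itself an object $Z$ of $\mathbf{Pre}(\sD,\sV)$: post-composing the structure maps $\sD(d',e)\rtarr\ul{\sM}(Y_e,Y_{d'})$ of the $\sV$-functor $Y$ with the $\sV$-functor $\ul{\sM}(M,-)\colon\sM\rtarr\sV$ produces maps $\sD(d',e)\rtarr\ul{\sV}(Z_e,Z_{d'})$ with $Z_e=\ul{\sM}(M,Y_e)$, and these satisfy the presheaf axioms. Since $\bY(d)_e=\sD(e,d)$ (\myref{YonNot}), the end just displayed is, by the definition of the enriched hom in $\mathbf{Pre}(\sD,\sV)$, exactly $\ul{\mathbf{Pre}(\sD,\sV)}(\bY(d),Z)$; the enriched Yoneda lemma, in the form $\ul{\mathbf{Pre}(\sD,\sV)}(\bY(d),Z)\cong Z_d$, identifies it with $Z_d=\ul{\sM}(M,Y_d)$. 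Splicing the two displayed isomorphisms yields the required one, and all of the isomorphisms involved are manifestly $\sV$-natural in $M$ and $Y$.

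The argument is essentially formal, so I do not anticipate a genuine obstacle; the only care needed is to verify that each step is an isomorphism of objects of $\sV$ that is $\sV$-natural rather than merely natural on underlying categories, and that the manipulations with ends are legitimate --- all of which belong to the standard enriched-categorical machinery assembled in \S\ref{enriched} and \S\ref{CatPre} (in particular the construction of $\mathbf{Fun}(\sD^{op},\sM)$ as a bicomplete $\sV$-category and the enriched Yoneda lemma). An alternative, essentially equivalent, route is to recognize $\text{ev}_d$ as restriction along the $\sV$-functor from the unit $\sV$-category picking out the object $d$ of $\sD^{op}$ and $F_d$ as the corresponding enriched left Kan extension, which exists because $\sM$ is bicomplete and tensored and satisfies $(F_d M)_e = M\odot\sD^{op}(d,e) = M\odot\sD(e,d)$; restriction and enriched left Kan extension always form a $\sV$-adjunction.
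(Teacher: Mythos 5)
Your proof is correct and takes essentially the same route as the paper: the paper's proof consists of applying the adjunction (\ref{WMX}) with $X=\bY(d)$ (so that $X\odot M=F_dM$) and then the enriched Yoneda lemma, and your computation with the end (\ref{PXY}) and the tensor adjunction (\ref{biten}) is exactly a direct verification of that instance of (\ref{WMX}), finished off by Yoneda in the same way. Your closing Kan-extension remark is just another packaging of the same adjunction, so there is nothing substantively different to flag.
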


\begin{rem}\mylabel{Gb} Dually, we have the $\sV$-functor $G_d\colon \sM\rtarr \mathbf{Fun} (\sD,\sM)$
defined by $G_dM = F(\bY(d),M)$, and
$(\text{ev}_d,G_d)$ is a $\sV$-adjunction between $\sM$ and $\mathbf{Fun} (\sD,\sM)$. 
\end{rem}

\subsection{The categorical context for the comparisons}

Under mild assumptions, discussed in \S\ref{ModelD}, the levelwise weak equivalences 
and fibrations determine a model structure on $\mathbf{Pre} (\sD,\sV)$.  {This is usually 
verified by \myref{level2}, and it often holds for any $\sD$ by \myref{monoid}.  We assume throughout 
that all of our presheaf categories $\mathbf{Pre} (\sD,\sV)$ are such model categories.
Presheaf model categories of this sort are the starting point for a great deal of work in 
many directions.  In particular, they give the starting point 
for several constructions of the stable homotopy category 
and for Voevodsky's homotopical approach to algebraic geometry.} 
In these applications, the level model structure is just a step on the way towards 
the definition of a more sophisticated model structure, but we are interested 
in applications in which the level model structure is itself the one of interest.  

We have so far assumed no relationship between $\sD$ and $\sM$, and in practice
one encounters different interesting contexts.  We are especially interested in the 
restricted kind of $\sV$-categories $\sD$ that are given by full embeddings 
$\sD\subset \sM$, but we shall see in \cite{GM2, GMequiv} that it is worth working more generally 
with a fixed $\sV$-functor $\de \colon \sD\rtarr \sM$ as starting
point.   We set up the relevant formal context before returning to
model theoretic considerations.

\begin{notns}\mylabel{Dsets} We fix a small $\sV$-category $\sD$ and
a $\sV$-functor $\de \colon \sD\rtarr \sM$, writing $(\sD,\de )$ for 
the pair.  As a case of particular interest, for a fixed set $\sD$ 
(or $\sD_{\sM}$) of objects of $\sM$, we let $\sD$ 
also denote the full $\sV$-subcategory of $\sM$ with object set $\sD$, 
and we then implicitly take $\de$ to be the inclusion.  
\end{notns}

We wish to compare $\sM$ with $\mathbf{Pre} (\sD,\sV)$.  There are 
two relevant frameworks.  In one, $\sD$ is given a priori, independently of $\sM$, and
$\sM$ is defined in terms of $\sD$ and $\sV$.  In the other,
$\sM$ is given a priori and $\sD$ is defined in terms of $\sM$.  
Either way, we have a $\sV$-adjunction relating $\sM$ and $\mathbf{Pre} (\sD,\sV)$. 

\begin{defn}\mylabel{bU} Define a $\sV$-functor $\bU\colon \sM\rtarr \mathbf{Pre} (\sD,\sV)$ 
by letting $\bU(M)$ be the $\sV$-functor represented (or $\de$-represented)
by $M$, so that $\bU(M)_d = \ul{\sM}(\de d,M)$. The evaluation maps of this
presheaf are 
{\small
\[ \xymatrix@1{
 \ul{\sM}(\de e,M)\otimes {\sD}(d,e) \ar[r]^-{\id\otimes \de } & 
 \ul{\sM}(\de e,M)\otimes  \ul{\sM}(\de d,\de e) \ar[r]^-{\circ} 
 &  \ul{\sM}(\de d,M).\\} \] }
When $\de$ is a full embedding, $\bU$ extends the Yoneda embedding: $\bU\circ \de = \bY$.
\end{defn}

\begin{prop}\mylabel{bT} The $\sV$-functor $\bU$ has the left $\sV$-adjoint $\bT$
defined by $\bT X = X\odot_{\sD} \de$.
\end{prop}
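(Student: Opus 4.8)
The plan is to construct $\bT$ explicitly as an enriched coend and then verify the $\sV$-adjunction isomorphism $\mathbf{Pre}(\sD,\sV)(X,\bU M)\cong \sM(\bT X, M)$, upgraded to an isomorphism of hom-objects in $\sV$. The first step is to define, for a presheaf $X\colon\sD^{op}\rtarr\sV$, the object $\bT X = X\odot_\sD\de\in\sM$ as the coend
\[
X\odot_\sD\de \;=\; \int^{d\in\sD} X_d\odot \de d,
\]
where $\odot$ is the tensor of $\sM$ over $\sV$. Concretely this is the coequalizer in $\sM$ of the two evident maps
\[
\coprod_{d,e} \bigl(X_e\otimes\sD(d,e)\bigr)\odot\de d \;\rightrightarrows\; \coprod_{d} X_d\odot\de d,
\]
one built from the action $X(e,d)\colon \sD(d,e)\rtarr\ul\sV(X_e,X_d)$ of the presheaf together with the adjunction $\sM(A\odot V,B)\cong\sV(V,\ul\sM(A,B))$, the other from the functor $\de$ together with the $\sM$-composition $\ul\sM(\de e,\de d\bigr)$ acting on $\de e\odot(-)$. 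Since $\sM$ is bicomplete and tensored over $\sV$, this coend exists; one then checks it is the object part of a $\sV$-functor $\bT\colon\mathbf{Pre}(\sD,\sV)\rtarr\sM$ by the usual functoriality of coends.

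The second step is the adjunction itself. On underlying hom-sets, using the universal property of the coequalizer, a map $\bT X\rtarr M$ in $\sM$ is a compatible family of maps $X_d\odot\de d\rtarr M$, i.e.\ by the tensor adjunction a compatible family $X_d\rtarr\ul\sM(\de d,M)=\bU(M)_d$, and the compatibility condition forced by the coequalizer is precisely $\sV$-naturality of the family in $d$; that is, exactly the data of a map $X\rtarr\bU M$ in $\mathbf{Pre}(\sD,\sV)$. This gives the natural bijection on morphism sets. To get a $\sV$-adjunction in the sense of \S\ref{EnHyp}, one enriches the same argument: the hom-object $\ul{\mathbf{Pre}(\sD,\sV)}(X,\bU M)$ is computed as an end $\int_d\ul\sV(X_d,\ul\sM(\de d,M))$, the hom-object $\ul\sM(\bT X,M)$ is computed by mapping the defining coend out, and the canonical adjunction isomorphisms $\ul\sM(A\odot V,B)\cong\ul\sV(V,\ul\sM(A,B))$ together with the fact that $\ul\sM(-,M)$ carries the coend to the corresponding end yield the required isomorphism in $\sV$, naturally in $X$ and $M$. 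Naturality and the fact that these isomorphisms are compatible with composition (hence assemble into a $\sV$-natural isomorphism) is a routine diagram chase using the definitions of the evaluation maps recorded in \myref{bU}.

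The main obstacle is bookkeeping rather than conceptual: one must be careful that the two maps in the coequalizer are the correct ones — in particular that the action map uses $X(e,d)$ in the contravariant slot correctly and that the enriched (as opposed to merely ordinary) coend is what appears, so that the resulting $\bT$ is genuinely a $\sV$-functor and the adjunction is genuinely $\sV$-enriched. A clean way to avoid grinding through this is to observe that $\bT$ is forced: $\bU$ preserves all (enriched) limits since it is built from the representable-valued functor $\ul\sM(\de(-),M)$, $\mathbf{Pre}(\sD,\sV)$ is a free cocompletion of $\sD$ in the enriched sense, and any $\sV$-functor out of it that is to be left adjoint to something is determined on representables; since $\bU\circ\de=\bY$ (when $\de$ is a full embedding, and more generally $\bU(M)_d=\ul\sM(\de d,M)$), the left adjoint must send $\bY(d)$ to $\de d$, hence by cocontinuity and the coend formula $X\cong\int^d X_d\odot\bY(d)$ it must send $X$ to $\int^d X_d\odot\de d$. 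This reduces the proof to: (i) this coend exists (bicompleteness of $\sM$), and (ii) the enriched Yoneda/density argument shows the resulting functor is indeed left $\sV$-adjoint to $\bU$, which is standard enriched category theory (Kelly \cite{Kelly}).
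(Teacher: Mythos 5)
Your proposal is correct and follows essentially the same route as the paper: the paper defines $\bT X = X\odot_{\sD}\de$ via the coequalizer (tensor product of functors) recorded in (\ref{FWXO}) and obtains the enriched adjunction as a special case of (\ref{WMXO}), which is exactly the coend/end manipulation you spell out. Your closing density/Kan-extension observation is a valid alternative packaging, but the substance matches the paper's argument.
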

\begin{proof}
This is an example of a tensor product of functors as specified in 
(\ref{FWXO}). It should be thought of as the extension of $X$ from $\sD$ to $\sM$. The $\sV$-adjunction
\[  \ul{\sM}(\bT X,M)\iso  \ul{\mathbf{Pre} }(\sD,\sV)(X,\bU M) \]
is a special case of (\ref{WMXO}). 
\end{proof} 

We will be studying when $(\bT,\bU)$ is a Quillen equivalence of model categories and we
record helpful observations about the unit $\et\colon \Id \rtarr \bU\bT$ and counit 
$\epz\colon \bT\bU\rtarr \Id$ of the adjunction $(\bT,\bU)$.  We are interested in
applying $\et$ to $X=F_dV\in \mathbf{Pre} (\sD,\sV)$ and $\epz$ to $d\in \sD$ when $\sD$ is
a full subcategory of $\sM$.  Remember that $F_dV = \bY(d)\odot V$.

\begin{lem}\mylabel{cute}  Let $d\in\sD$ and $V\in \sV$. Then
$\bT(F_dV)$ is naturally isomorphic to $\de d\odot V$. When 
evaluated at $e\in \sD$,  
\begin{equation}\label{eta}
\et\colon \bY(d)\odot V = F_dV \rtarr \bU\bT(F_dV) \iso
\bU(\de d\odot V)
\end{equation}
is the map
\[ \xymatrix{ 
\sD(e,d)\otimes V \ar[r]^-{\de\otimes\id} &  \ul{\sM}(\de e,\de d)\otimes V \ar[r]^-{\om} &
 \ul{\sM}(\de e,\de d\odot V),\\} \]
where $\om$ is the natural map of (\ref{keymap}). Therefore, if $\de \colon \sD\rtarr \sM$ 
is the inclusion of a full subcategory and $V=\mathbf{I}$, then 
$\et\colon {\sD}(e,d)\rtarr \ul{\sM}(e,d)$ is the identity map and 
$\epz\colon \bT\bU(d) = \bT\bY(d)\to d$ is an isomorphism.
\end{lem}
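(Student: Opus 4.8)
The plan is to unwind the definition of $\bT$ on the representable-tensor presheaf $F_dV$ and identify it with $\de d\odot V$ directly, then trace the adjunction unit through this identification and finally specialize.

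First I would compute $\bT(F_dV)$. By \myref{bT}, $\bT X = X\odot_\sD \de$, the tensor product of functors. For $X = F_dV = \bY(d)\odot V$, the defining property of the Yoneda presheaf together with the co-Yoneda lemma (the enriched analogue, which is the computation of the coend $\int^e \sD(e,d)\odot \de e \iso \de d$) gives $\bT(\bY(d)) \iso \de d$, and since $\odot$-tensoring commutes with the coend in the appropriate variable, $\bT(F_dV) = \bT(\bY(d)\odot V) \iso (\bT\bY(d))\odot V \iso \de d\odot V$. This is where I would cite (\ref{FWXO}) and the coend manipulations referenced there; the isomorphism is natural in both $d$ and $V$.

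Next I would identify the unit map $\et$ under this isomorphism. The unit $\et\colon X \rtarr \bU\bT X$ of the $\sV$-adjunction $(\bT,\bU)$, evaluated at $X = F_dV$ and then at the object $e\in\sD$, must land in $\bU(\de d\odot V)_e = \ul{\sM}(\de e,\de d\odot V)$. By the standard recipe for an adjunction unit in terms of the enriched Yoneda/coend presentation of $\bT$, together with the explicit description of $\bU$ and its evaluation maps in \myref{bU}, the composite $\sD(e,d)\otimes V \rtarr \ul{\sM}(\de e,\de d\odot V)$ is forced to be the one that first applies $\de\colon \sD(e,d)\rtarr \ul{\sM}(\de e,\de d)$ and then the canonical map $\om$ of (\ref{keymap}) that realizes the tensor $\de d\odot V$; this is essentially a diagram chase identifying the transpose of the identity on $\bT(F_dV)$. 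I expect this bookkeeping — matching the abstract unit against the concrete formula built from $\de\otimes\id$ and $\om$ — to be the main obstacle, though it is purely formal once the right naturality squares are drawn.

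Finally I would specialize. If $\de$ is the inclusion of a full $\sV$-subcategory, then by definition $\sD(e,d) = \ul{\sM}(e,d)$ and $\de$ is the identity on hom-objects, so $\de\otimes\id$ is the identity. Taking $V = \mathbf{I}$, the map $\om\colon \ul{\sM}(e,d)\otimes\mathbf{I}\rtarr \ul{\sM}(e,d\odot\mathbf{I})$ is, after the canonical unit isomorphisms $\ul{\sM}(e,d)\otimes\mathbf{I}\iso\ul{\sM}(e,d)$ and $d\odot\mathbf{I}\iso d$, again the identity (this is one of the coherence constraints for the cotensor/tensor action of $\sV$ on $\sM$). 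Hence $\et\colon \sD(e,d)\rtarr\ul{\sM}(e,d)$ is the identity for all $e$, which says $\et\colon \bY(d)\rtarr\bU\bT\bY(d)$ is an isomorphism of presheaves. Since $(\bT,\bU)$ is an adjunction, the triangle identity $\epz\bT \circ \bT\et = \id$ then shows $\epz\colon \bT\bU(d) = \bT\bY(d)\rtarr d$ is a split epimorphism whose section is an isomorphism, hence an isomorphism; alternatively one notes $\bT\bY(d)\iso \de d = d$ and checks $\epz$ is this isomorphism directly.
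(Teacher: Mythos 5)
Your proposal is correct, and it reaches the first isomorphism by a genuinely different (dual) route from the paper. The paper never computes the coend: it shows that $\ul{\sM}(\bT(\bY(d)\odot V),M)$ and $\ul{\sM}(\de d\odot V,M)$ are naturally isomorphic for all $M$, using the $(\bT,\bU)$-adjunction, two applications of (\ref{biten}), and the enriched Yoneda lemma for $\mathbf{Pre}(\sD,\sV)$, and then invokes the enriched Yoneda lemma once more to conclude $\bT(F_dV)\iso \de d\odot V$; you instead evaluate the tensor product of functors (\ref{FWXO}) directly via the enriched co-Yoneda lemma $\bY(d)\odot_{\sD}\de\iso \de d$ and commute $(-)\odot V$ past the coend (legitimate, since $(-)\odot V$ is a left adjoint and (\ref{trans}) handles the rebracketing). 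The mapping-out argument buys a very short proof with no colimit bookkeeping; your coend computation is more explicit and makes the subsequent identification of $\et$ as $\om\circ(\de\otimes\id)$ feel less like ``inspection,'' which is all the paper offers at that step, so the two treatments of $\et$ are at the same level of rigor. For the final statement the paper simply observes $\om=\id$ when $V=\mathbf{I}$; your triangle-identity argument as literally written has an index slip ($\epz_{\bT X}\circ\bT\et_X=\id$ with $X=\bY(d)$ concerns $\epz$ at the object $\bT\bU(d)$, not at $d$, so one still needs naturality of $\epz$ along the isomorphism $\bT\bY(d)\iso d$, or else $\bU\epz_d\circ\et_{\bU d}=\id$ only gives that $\bU\epz_d$ is an isomorphism), but the alternative you state---identify $\bT\bY(d)\iso d$ and check that $\epz$ is this isomorphism---is exactly the intended argument and closes the point.
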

\begin{proof}  For the first statement, for any $M\in\sM$ we have 
\begin{eqnarray*}
\ul{\sM}(\bT(\bY(d)\odot V),M) & \iso & \ul{\mathbf{Pre} }(\sD,\sV)(\bY(d)\odot V,\bU(M))\\
& \iso & \ul{\sV}(V, \ul{\mathbf{Pre} }(\sD,\sV)(\bY(d),\bU(M)))\\
& \iso &\ul{\sV}(V,\ul{\sM}(\de d,M)) \\
&\iso & \ul{\sM}(\de d\odot V,M),
\end{eqnarray*}
by adjunction, two uses of (\ref{biten}) below, and the definition of tensors.
By the enriched Yoneda lemma, this implies $\bT(\bY(d)\odot V)\iso \de d\odot V$. 
The description of $\et$ follows by inspection, and the last statement holds
since $\om = \id$ when $V=\mathbf{I}$.
\end{proof}

\begin{rem}\mylabel{Ddealt}  There is a canonical factorization of the pair
$(\sD,\de)$.  We take $\sD\!_{\sM}$ to be the full $\sV$-subcategory of
$\sM$ with objects the $\de d$.  Then $\de$ factors as the composite of
a $\sV$-functor $\de \colon \sD\rtarr \sD\!_{\sM}$ and the inclusion 
$\io \colon \sD\!_{\sM} \subset \sM$.  The $\sV$-adjunction $(\bT,\bU)$
factors as the composite of $\sV$-adjunctions
\[ \de_!\colon \mathbf{Pre} (\sD,\sV) \rightleftarrows \mathbf{Pre} (\sD_{\sM},\sV) \colon 
\de ^* \ \ \
\text{and} \ \ \ \bT\colon \mathbf{Pre} (\sD_{\sM},\sV)  
\rightleftarrows \sM\colon \bU \]
(see \myref{flesh} below). As suggested by the notation, the same $\sD$ can relate to 
different categories $\sM$. However, the composite Quillen adjunction can be a Quillen 
equivalence even though neither of the displayed Quillen adjunctions is so.  
An interesting class of examples is given in \cite{GMequiv}. 
\end{rem}

\subsection{When does $(\sD,\de)$ induce an equivalent model structure on $\sM$?}\label{DKM}

With the details of context in hand, we return to the questions in the introduction.
Letting $\sM$ be a bicomplete $\sV$-category, we repeat the first question.  Here we 
start with a model category $\mathbf{Pre} (\sD,\sV)$ of presheaves and try to create a Quillen
equivalent model structure on $\sM$. Here and in the later questions, we are interested 
in Quillen $\sV$-adjunctions and Quillen $\sV$-equivalences, as defined in \myref{Quillad}.

\begin{quest}\mylabel{quest1too} For which $\de \colon \sD\rtarr \sM$ can 
one define a $\sV$-model structure on $\sM$ such that $\sM$ is 
Quillen equivalent to $\mathbf{Pre} (\sD,\sV)$?  
\end{quest}

Perhaps more sensibly, we can first ask this question for full embeddings corresponding 
to chosen sets of objects of $\sM$ and then look for more calculable smaller
categories $\sD$, using \myref{Ddealt} to break the question into two steps.

An early topological example where \myref{quest1too} has a positive answer 
is that of $G$-spaces (Piacenza \cite{Pia}, \cite[Ch. VI]{EHCT}), which we recall
and generalize in \cite{GMequiv}.   

The general answer to \myref{quest1too} starts from a model 
structure on $\sM$ defined in terms of $\sD$, which we 
call the $\sD$-model structure.  Recall
that $(\bU M)_d = \ul{M}(\de d,M)$.   

\begin{defn}\mylabel{newmod}  { Recall our standing assumption that 
$\mathbf{Pre}(\sD,\sV)$ has the level model structure of \myref{level}, which 
specifies sets $F\mathcal{I}$ and $F\mathcal{J}$ of generating cofibrations and 
generating acyclic cofibrations.}
A map $f\colon M\rtarr N$ in $\sM$ is a $\sD$-equivalence or 
$\sD$-fibration if $\bU f_d$ is a weak 
equivalence or fibration in 
$\sV$ for all $d\in \sD$; $f$ is a $\sD$-cofibration if it 
satisfies the LLP with respect to the $\sD$-acyclic $\sD$-fibrations.  
Define $\bT F\mathcal{I}$ and 
$\bT F\mathcal{J}$ to be the sets of maps in $\sM$ obtained by applying $\bT$ to  
$F\mathcal{I}$ and $F\mathcal{J}$.
\end{defn}

{We assume familiarity with the small object argument (e.g. \cite[\S15.1]{MP}).}

\begin{thm}\mylabel{Thm1} If $\bT F\mathcal{I}$ and $\bT F\mathcal{J}$ satisfy 
the small object argument
 and $\bT F\mathcal{J}$ satisfies the acyclicity condition
for the $\sD$-equivalences, then $\sM$ is a cofibrantly generated 
$\sV$-model category under the 
$\sD$-classes of maps, and $(\bT,\bU)$ is a Quillen $\sV$-adjunction. It is a Quillen 
$\sV$-equivalence if and only if the unit map $\et\colon X\rtarr \bU\bT X$ is a weak 
equivalence in $\mathbf{Pre} (\sD,\sV)$ for all cofibrant objects $X$.
\end{thm}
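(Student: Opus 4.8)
The plan is to break the statement into three parts: (i) that the $\sD$-classes of maps form a cofibrantly generated model structure on $\sM$; (ii) that this is a $\sV$-model category and $(\bT,\bU)$ is a Quillen $\sV$-adjunction; and (iii) the Quillen-equivalence criterion. For (i), I would invoke the standard recognition theorem for cofibrantly generated model categories (Hovey \cite[2.1.19]{Hovey} or Hirschhorn \cite[11.3.1]{Hirsch}), with $\bT F\mathcal{I}$ as generating cofibrations and $\bT F\mathcal{J}$ as generating acyclic cofibrations. The hypotheses of that theorem are: the weak equivalences satisfy two-out-of-three and are closed under retracts (immediate, since they are created by $\bU$ from weak equivalences in $\sV$); both sets permit the small object argument (assumed); $\bT F\mathcal{J}$-cell complexes are $\sD$-equivalences (this is exactly the assumed acyclicity condition); and the two characterizations of fibrations/acyclic fibrations via lifting match up. The last point is where one uses the adjunction $(\bT,\bU)$: a map $f$ in $\sM$ has the RLP with respect to $\bT F\mathcal{I}$ iff $\bU f$ has the RLP with respect to $F\mathcal{I}$ in $\mathbf{Pre}(\sD,\sV)$, iff (by the level model structure) each $(\bU f)_d$ is an acyclic fibration in $\sV$, iff $f$ is a $\sD$-acyclic $\sD$-fibration; similarly with $\mathcal{J}$ in place of $\mathcal{I}$. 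This forces $\bT F\mathcal{I}$-injectives to be $\sD$-acyclic $\sD$-fibrations and $\bT F\mathcal{J}$-injectives to be $\sD$-fibrations, which is what the recognition theorem needs.

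For (ii), I would check the pushout-product axiom (the $\sV$-analogue of SM7; see \myref{monoidal}). Since $\bT$ is a left $\sV$-adjoint, it preserves tensors with objects of $\sV$ and preserves pushout-products: $\bT(F_i \square g) \iso \bT(F_i) \square g$ for a generating (acyclic) cofibration $F_i$ of $\mathbf{Pre}(\sD,\sV)$ and a (acyclic) cofibration $g$ of $\sV$. So it suffices to know the pushout-product axiom in $\mathbf{Pre}(\sD,\sV)$ on generators, which holds because that model structure is $\sV$-enriched (it is built levelwise from $\sV$, which is a monoidal model category, and $F_d$ is a left $\sV$-adjoint). Then one reduces the pushout-product axiom in $\sM$ to generating cofibrations by the usual cell-induction argument, using that $\bT F\mathcal{I}$ and $\bT F\mathcal{J}$ generate. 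That $(\bT,\bU)$ is a Quillen $\sV$-adjunction is then automatic: $\bT$ takes the generating (acyclic) cofibrations $F\mathcal{I}$ (resp. $F\mathcal{J}$) of $\mathbf{Pre}(\sD,\sV)$ to $\bT F\mathcal{I}$ (resp. $\bT F\mathcal{J}$), which are generating (acyclic) cofibrations of $\sM$ by construction.

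For (iii), one uses the standard criterion (Hovey \cite[1.3.16]{Hovey}): a Quillen adjunction $(\bT,\bU)$ is a Quillen equivalence iff $\bU$ reflects weak equivalences between fibrant objects and, for every cofibrant $X$, the composite $X \xrightarrow{\et} \bU\bT X \to \bU R(\bT X)$ of the unit with $\bU$ applied to a fibrant replacement is a weak equivalence. Here every object of $\sM$ is $\sD$-fibrant: a $\sD$-fibration is detected by $\bU$ landing in fibrations of $\sV$, and every object of $\sV$ is fibrant only if $\sV$ has that property — so instead I would argue directly that $\bU$ creates weak equivalences (by \myref{newmod}, $f$ is a $\sD$-equivalence precisely when $\bU f$ is a levelwise equivalence), so $\bU$ reflects all weak equivalences, not merely those between fibrant objects, and the fibrant-replacement term can be absorbed since $\bU$ preserves the relevant weak equivalence $\bT X \to R\bT X$ (it is a $\bT F\mathcal J$-cell-ish acyclic cofibration, hence a $\sD$-equivalence, hence carried to a weak equivalence by $\bU$). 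This reduces the criterion to: $\et\colon X \to \bU\bT X$ is a weak equivalence for all cofibrant $X$, which is the stated condition.

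\textbf{Main obstacle.} The genuinely delicate point is (ii), the pushout-product axiom in $\sM$, since the model structure on $\sM$ is defined only by a lifting property and $\bT$ need not preserve cofibrations of arbitrary shape — only those built from $\bT F\mathcal{I}$. The argument has to run entirely through the generators and the fact that $\bT$, being a left adjoint preserving $\sV$-tensors, commutes with the pushout-product construction on generating maps; one must be careful that "cofibration in $\mathbf{Pre}(\sD,\sV)$" on the source side really can be reduced to $F\mathcal I$, which in turn relies on the level model structure on $\mathbf{Pre}(\sD,\sV)$ being cofibrantly generated by $F\mathcal I$ and $F\mathcal J$ (recorded in \S\ref{ModelD}). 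The acyclicity half of the pushout-product axiom similarly rests on the assumed acyclicity condition for $\bT F\mathcal{J}$. Everything else is bookkeeping with adjunctions and the recognition theorem.
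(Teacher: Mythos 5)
Your proposal is correct and follows essentially the same route as the paper: the paper compresses your parts (i) and (ii) into a citation of the standard transfer argument (\cite[11.3.2]{Hirsch} together with \myref{criterion}), notes that $\bU$ creates the $\sD$-equivalences and $\sD$-fibrations to obtain the Quillen $\sV$-adjunction, and handles the equivalence criterion exactly as you do via \cite[1.3.16]{Hovey}. Your explicit reduction of the pushout-product axiom to the generating sets is precisely what that citation is standing in for.
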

\begin{proof} As in \cite[11.3.2]{Hirsch},
$\sM$ inherits its $\sV$-model structure from $\mathbf{Pre} (\sD,\sV)$, via \myref{criterion}.
Since $\bU$ creates the $\sD$-equivalences and $\sD$-fibrations in $\sM$,
$(\bT,\bU)$ is a Quillen $\sV$-adjunction.  The last statement holds by
\cite[1.3.16]{Hovey} or \cite[16.2.3]{MP}.
\end{proof} 

\begin{rem}  By adjunction, the smallness condition {required for the small object
argument} holds if the
domains of maps in $\mathcal{I}$ or $\mathcal{J}$ are small with 
respect to the maps $\ul{\sM}(\de d,A)\rtarr \ul{\sM}(\de d,X)$, 
where $A\rtarr X$ is a $\bT F \mathcal{I}$ or $\bT F \mathcal{J}$
cell object in $\sM$.  This condition is usually easy to check in
practice, and it holds in general when $\sM$ is locally presentable.  
The acyclicity condition (defined in \myref{catweak})
holds if and only if $\bU$ carries relative $\bT F\mathcal{J}$-cell 
complexes to level equivalences, so it is obvious what must be proven.
However, the details of proof can vary considerably from one
context to another.
\end{rem}

\begin{rem}\mylabel{proper} Since $\sV$ is right proper and the
right adjoints $\sM(\de d,-)$ preserve pullbacks, it is 
clear that $\sM$ is right proper.  It is not clear that $\sM$
is left proper.   Since we have assumed that $\sV$ is left proper, 
$\sM$ is left proper provided that, for a cofibration $M\rtarr N$ and a weak
equivalence $M\rtarr Q$, the maps 
\[ \ul{\sM}(\de d,M)\rtarr \ul{\sM}(\de d,N) \] are 
cofibrations in $\sV$ and the canonical maps 
\[ \ul{\sM}(\de d,N)\cup_{\ul{\sM}(\de d,M)} \ul{\sM}(\de d,Q) 
\rtarr \ul{\sM}(\de d, N\cup_M Q) \]
are weak equivalences in $\sV$.  In topological situations,
left properness can often be shown in situations where it is not
obviously to be expected; see \cite[6.5]{MMSS} or \cite[5.5.1]{MS}, for example. 
\end{rem}

\begin{rem}\mylabel{howto}  To prove that $\et\colon X \rtarr \bU\bT X$ 
is a weak equivalence when $X$ is cofibrant, one may assume that $X$
is an $F\mathcal{I}$-cell complex. When $X=F_dV$, the maps 
\[ \om\colon \ul{\sM}(e,d)\otimes V\rtarr \ul{\sM}(e,d\odot V)\] 
of (\ref{keymap}) that appear in our description of $\eta$ in \myref{cute} 
are usually quite explicit, and sometimes even isomorphisms, and one first
checks that they are weak equivalences when $V$ is the source or target 
of a map in $\mathcal{I}$. One then uses that cell complexes are built up as
(transfinite) sequential colimits of pushouts of coproducts of maps in 
$F\mathcal{I}$.  There are two considerations in play.  First, one needs $\sV$ to 
be sufficiently well behaved that the relevant colimits preserve weak equivalences. 
Second, one needs $\sM$ 
and $\sD$ to be sufficiently well behaved that the right adjoint $\bU$ 
preserves the relevant categorical colimits, at least up to weak equivalence. Formally, if
$X$ is a relevant categorical colimit, $\colim X_s$ say, then
$\et\colon X_d\rtarr \ul{\sM}(\de d,\bT X)$ factors as the composite
\[ \colim (X_s)_d\rtarr \colim \ul{\sM}(\de d,\bT X_s) \rtarr 
\ul{\sM}(\de d,\colim\bT X_s),\]
and a sensible strategy is to prove that these two maps are each weak equivalences, 
the first as a colimit of weak equivalences in $\sV$ and
the second by a preservation of colimits result for $\bU$.  Suitable 
compactness (or smallness) of the objects $d$ can reduce the problem to 
the pushout case, which can be dealt with using an appropriate
version of the gluing lemma asserting that a pushout of weak equivalences
is a weak equivalence. We prefer not to give a formal axiomatization 
since the relevant verifications can be technically quite different in 
different contexts.
\end{rem}

\subsection{When is a given model category $\sM$ equivalent to some $\mathbf{Pre} (\sD,\sV)$?}

We are more interested in the second question in the introduction, which we repeat. 
Changing focus, we now start with a given model structure on $\sM$.

\begin{quest}\mylabel{quest2too} Suppose that $\sM$ is a $\sV$-model category.  
When is $\sM$ Quillen equivalent to $\mathbf{Pre} (\sD,\sV)$, where $\sD=\sD_{\sM}$ 
is the full sub $\sV$-category of $\sM$ given by some well chosen set of objects $d\in \sM$?
\end{quest}

\begin{asses}\mylabel{assess}
Since we want $\ul{\sM}(d,e)$ to be homotopically meaningful,
we require henceforward that the objects of our full subcategory
$\sD$ be bifibrant.   {As usual, we also assume that 
$\mathbf{Pre} (\sD,\sV)$ has the level model structure of \S\ref{ModelD}.}
\end{asses}

The following invariance result helps motivate the assumption that the objects
of $\sD$ be bifibrant. 

\begin{lem}\mylabel{Cutesy} Let $\sM$ be a $\sV$-model category, let $M$
and $M'$ be cofibrant objects of $\sM$, and let $N$ and $N'$ be fibrant objects
of $\sM$.  If $\ze\colon M\rtarr M'$ and $\xi\colon N\rtarr N'$ are weak equivalences
in $\sM$, then the induced maps
\[ \ze^*\colon \ul{\sM}(M',N) \rtarr \ul{\sM}(M,N) \ \ \ \text{and}\ \ \ 
\xi_*\colon \ul{\sM}(M,N) \rtarr \ul{\sM}(M,N')  \]
are weak equivalences in $\sV$.
\end{lem}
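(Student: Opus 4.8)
The plan is to deduce both assertions from the pushout--product axiom built into the definition of a $\sV$-model category (see \myref{monoidal}) together with Ken Brown's lemma. Among the equivalent forms of that axiom is the statement that, for a cofibration $i\colon A\rtarr B$ and a fibration $p\colon X\rtarr Y$ in $\sM$, the induced map
\[ \ul{\sM}(B,X)\rtarr \ul{\sM}(A,X)\times_{\ul{\sM}(A,Y)}\ul{\sM}(B,Y) \]
is a fibration in $\sV$, and is acyclic if $i$ or $p$ is acyclic. Specializing with $A$ the initial object of $\sM$ shows that when $M$ is cofibrant the $\sV$-functor $\ul{\sM}(M,-)\colon\sM\rtarr\sV$ carries fibrations to fibrations and acyclic fibrations to acyclic fibrations. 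Specializing instead with $Y$ the terminal object shows that when $N$ is fibrant the contravariant $\sV$-functor $\ul{\sM}(-,N)\colon\sM\rtarr\sV$ carries cofibrations to fibrations and acyclic cofibrations to acyclic fibrations.

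First I would handle $\xi_*$. Fix the cofibrant object $M$. By the previous paragraph, $\ul{\sM}(M,-)$ sends acyclic fibrations between fibrant objects of $\sM$ to acyclic fibrations, hence to weak equivalences, in $\sV$. The dual form of Ken Brown's lemma then shows that $\ul{\sM}(M,-)$ sends every weak equivalence between fibrant objects of $\sM$ to a weak equivalence in $\sV$; since $N$ and $N'$ are fibrant, $\xi_*$ is a weak equivalence.

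The argument for $\zeta^*$ is the formal dual, now fixing the fibrant object $N$ and working with the contravariant functor $\ul{\sM}(-,N)$. By the first paragraph it sends acyclic cofibrations between cofibrant objects of $\sM$ to acyclic fibrations, in particular to weak equivalences, in $\sV$. The contravariant form of Ken Brown's lemma (apply the usual statement to $\ul{\sM}(-,N)$ regarded as a covariant functor $\sM^{\mathrm{op}}\rtarr\sV$) then shows that $\ul{\sM}(-,N)$ carries every weak equivalence between cofibrant objects of $\sM$ to a weak equivalence in $\sV$, so $\zeta^*$ is a weak equivalence because $M$ and $M'$ are cofibrant.

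There is no real obstacle here; the only things to watch are the bookkeeping of variance when passing to $\sM^{\mathrm{op}}$ and the trivial identification of the pullbacks that appear when one of the objects in the pushout--product map is initial or terminal. One could alternatively phrase the first half by noting that $(-)\odot M\colon\sV\rtarr\sM$ is left Quillen when $M$ is cofibrant, so that its right adjoint $\ul{\sM}(M,-)$ is right Quillen and hence preserves weak equivalences between fibrant objects, and symmetrically for $\zeta^*$ using the cotensor $F(-,N)$; I would probably still present the Ken Brown argument, since it is self-contained and avoids identifying adjoints.
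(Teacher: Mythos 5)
Your proof is correct and takes essentially the same route as the paper: Ken Brown's lemma combined with the enriched lifting axiom of a $\sV$-model category, applied to $\ul{\sM}(M,-)$ for the fibrant objects $N,N'$ and dually to $\ul{\sM}(-,N)$ for the cofibrant objects $M,M'$. The paper feeds the key input in via the pushout-product form (\ref{tencoten2}) together with the adjunction (\ref{biten}) rather than by specializing (\ref{tencoten1}) at the initial and terminal objects, but these are the equivalent formulations recorded in \myref{monoidal}, so the argument is the same.
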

\begin{proof} We prove the result for $\xi_*$. The proof for $\ze^*$ is dual.
Consider the functor $\ul{\sM}(M,-)$ from $\sM$ to $\sV$. By Ken Brown's 
lemma (\cite[1.1.12]{Hovey} or \cite[14.2.9]{MP}) and our assumption that $N$ and $N'$ are
fibrant, it suffices to prove that $\xi_*$ is a weak equivalence when $\xi$ is an acyclic 
fibration. If $V\rtarr W$ is a cofibration in $\sV$, then $M\odot V \rtarr M\odot W$ 
is a cofibration in $\sM$ since $M$ is cofibrant and $\sM$ is a $\sV$-model category.
Therefore the adjunction (\ref{biten}) that defines $\odot$ implies that if $\xi$ is an
acyclic fibration in $\sM$, then $\xi_*$ is an acyclic fibration in $\sV$ and thus a weak 
equivalence in $\sV$.
\end{proof} 

\myref{quest2too} does not seem to have been asked before in quite this form 
and level of generality.  Working simplicially, Dugger \cite{Dug} studied a related question, 
asking when a given model category is Quillen equivalent to some 
localization of a presheaf category.  He called such an equivalence a 
``presentation'' of a model category, viewing the localization as specifying 
the relations.  That is an interesting point of view for theoretical purposes, 
since the result can be used to deduce formal properties of $\sM$ from formal 
properties of presheaf categories and localization.  However, the relevant domain 
categories $\sD$ are not intended to be small and calculationally accessible.

Working simplicially with stable model categories enriched over symmetric
spectra, Schwede and Shipley made an extensive study of essentially
this question in a series of papers, starting with \cite{SS}. The question
is much simpler to answer stably than in general, and we shall return to 
this in \S\ref{Stablesec}. 

{Of course, if the given model structure on $\sM$ is a $\sD$-model 
structure, as in \myref{Thm1}, then nothing more need be said. However, when that is not the case,
 the answer is not obvious.}  We
offer a general approach to the question.  The following starting point is immediate from 
the definitions and \myref{assess}. 

\begin{prop}\mylabel{Quill} $(\bT,\bU)$ is a Quillen adjunction between 
the $\sV$-model categories $\sM$ and $\mathbf{Pre} (\sD,\sV)$.
\end{prop}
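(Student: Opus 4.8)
The plan is to verify that $\bU$ is a right Quillen functor; since $(\bT,\bU)$ is a $\sV$-adjunction by \myref{bT}, that is all that is required. By \myref{assess} we are equipping $\mathbf{Pre}(\sD,\sV)$ with its level model structure, so a map $g$ of presheaves is a fibration, resp.\ acyclic fibration, precisely when each $g_d$ is a fibration, resp.\ acyclic fibration, in $\sV$. Since $(\bU f)_d = \ul{\sM}(\de d, f)$ by the definition of $\bU$ in \myref{bU}, the proposition reduces to showing that for every $d\in\sD$ the $\sV$-functor $\ul{\sM}(\de d,-)\colon\sM\rtarr\sV$ preserves fibrations and acyclic fibrations.

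For this I would use the bifibrancy of the objects of $\sD$ from \myref{assess}: in particular $\de d$ is cofibrant. As noted in the proof of \myref{Cutesy}, for a cofibration $V\rtarr W$ in $\sV$ the map $\de d\odot V\rtarr \de d\odot W$ is then a cofibration in $\sM$, and the analogous statement holds for acyclic cofibrations, because $\sM$ is a $\sV$-model category (\myref{monoidal}) and $\de d$ is cofibrant. Thus $(\de d\odot -,\,\ul{\sM}(\de d,-))$ is a Quillen adjunction between $\sV$ and $\sM$, so the right adjoint $\ul{\sM}(\de d,-)$ carries fibrations to fibrations and acyclic fibrations to acyclic fibrations. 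Assembling this over all $d$ shows that $\bU$ preserves fibrations and acyclic fibrations, and we are done.

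There is no genuine obstacle here; the only hypothesis that does any work is the cofibrancy half of the bifibrancy assumption on $\sD$ (\myref{assess}), which is exactly what makes each $\de d\odot(-)$ a left Quillen functor, together with the fact that the level model structure detects fibrations levelwise. One could instead argue on the left: the level model structure is cofibrantly generated by $F\mathcal{I}$ and $F\mathcal{J}$, and by \myref{cute} one has $\bT(F_d V)\iso \de d\odot V$, so the same cofibrancy of $\de d$ shows that $\bT$ sends the generating cofibrations and generating acyclic cofibrations of $\mathbf{Pre}(\sD,\sV)$ to cofibrations and acyclic cofibrations of $\sM$, whence $\bT$ is left Quillen. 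I would present the argument through $\bU$ since it does not presuppose a chosen set of generators for $\mathbf{Pre}(\sD,\sV)$.
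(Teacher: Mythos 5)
Your proof is correct and is essentially the paper's argument: the paper likewise deduces that each $\ul{\sM}(\de d,-)$ preserves fibrations and acyclic fibrations from the cofibrancy of the objects of $\sD$ together with the $\sV$-model category axiom for $\sM$ (applied in the form (\ref{tencoten1}) to the cofibration $\emptyset\rtarr d$, which is the same content as your pushout-product formulation), and then uses that fibrations and acyclic fibrations in $\mathbf{Pre}(\sD,\sV)$ are detected levelwise. Your alternative remark via $\bT$ on the generating (acyclic) cofibrations is a fine equivalent route but not needed.
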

\begin{proof} Applied to the cofibrations $\emptyset \rtarr d$ given by our assumption
that the objects of $\sD$ are cofibrant, the definition 
of a $\sV$-model structure implies that if $p\colon E\rtarr B$ is a fibration or acyclic 
fibration in $\sM$, then 
$p_*\colon \ul{\sM}(d,E)\rtarr \ul{\sM}(d,B)$ 
is a fibration or acyclic fibration in $\sV$. 
\end{proof}

{As with any Quillen adjunction, $(\bT,\bU)$ is a Quillen equivalence if and only if it induces
an adjoint equivalence of homotopy categories.  Clearly, we cannot expect this to hold unless
the  $\sD$-equivalences are closely related to the class $\sW$ of weak 
equivalences in the given model structure on $\sM$. }

\begin{defn}\mylabel{refcre} Let $\sD$ be a set of objects of $\sM$ satisfying \myref{assess}.
\begin{enumerate}[(i)]
\item Say that $\sD$ is a reflecting set if $\bU$ reflects weak equivalences
between fibrant objects of $\sM$; this means that if $M$ and $N$ are fibrant
and $f\colon M\rtarr N$ is a map in $\sM$ such that $\bU f$ is a weak
equivalence, then $f$ is a weak equivalence.
\item Say that $\sD$ is a creating set if $\bU$ creates
the weak equivalences in $\sM$; this means that a map $f\colon M\rtarr N$
in $\sM$ is a weak equivalence if and only if $\bU f$ is a weak equivalence,
so that $\sW$ coincides with the $\sD$-equivalences.
\end{enumerate}
\end{defn}

\begin{rem} Since the functor $\bU$ preserves acyclic fibrations between
fibrant objects, it preserves weak equivalences between fibrant objects
(\cite[1.1.12]{Hovey} or \cite[14.2.9]{MP}).  Therefore, if $\sD$ is a reflecting set, then 
$\bU$ creates the weak equivalences between the fibrant objects of $\sM$.
\end{rem}

Observe that \myref{Thm1} requires $\sD$ to be a creating set.  However, 
when one starts with a given model structure 
on $\sM$, there are many examples where no reasonably small set $\sD$
creates all of the weak equivalences in 
$\sM$, rather than just those between fibrant objects. On the other hand, 
in many algebraic and topological situations all objects are fibrant, and then there is 
no distinction. By \cite[1.3.16]{Hovey} or \cite[16.2.3]{MP}, we have the 
following criteria for $(\bT,\bU)$ to be a Quillen equivalence.

\begin{thm}\mylabel{Quill2}  Let $\sM$ be a $\sV$-model category and $\sD\subset \sM$
be a small full subcategory such that \myref{assess} are satisfied.
\begin{enumerate}[(i)] 
\item $(\bT,\bU)$ is a Quillen equivalence if and only if $\sD$ is a 
reflecting set and the composite
\[ \xymatrix@1{
X\ar[r]^-{\et} & \bU\bT X \ar[r]^-{\bU \la} & \bU \bR \bT X\\}\]
is a weak equivalence in $\mathbf{Pre} (\sD,\sV)$ for every cofibrant object $X$.
Here $\et$ is the unit of the adjunction and $\la\colon \Id\rtarr \bR$ is a 
fibrant replacement functor in $\sM$.  
\item When $\sD$ is a creating set, $(\bT,\bU)$ is a 
Quillen equivalence if and only if the map
$\et\colon X\rtarr \bU\bT X$ is a weak equivalence for every
cofibrant $X$.
\end{enumerate}
\end{thm}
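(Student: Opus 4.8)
The plan is to derive all three parts directly from the standard characterizations of Quillen equivalences in \cite[1.3.13]{Hovey} and \cite[1.3.16]{Hovey} (equivalently \cite[A.2]{MMSS}), together with the fact, recorded in \myref{Quill}, that $(\bT,\bU)$ is already a Quillen adjunction under \myref{assess}. The one preliminary point is that a ``Quillen $\sV$-equivalence'' may be tested on underlying data: the underlying adjunction of the $\sV$-adjunction $(\bT,\bU)$ is the ordinary adjunction appearing in \myref{Quill}, with hom sets recovered from the enriched homs by (\ref{homset}), and the $\sV$-model structure axioms are imposed on the underlying model categories. So it suffices to work with the underlying model categories and adjunction throughout.

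For part (i), I would simply quote the relevant equivalent formulation in \cite[1.3.16]{Hovey}: a Quillen adjunction with right adjoint $\bU$ is a Quillen equivalence if and only if $\bU$ reflects weak equivalences between fibrant objects and, for every cofibrant $X$, the composite $X\xrightarrow{\et}\bU\bT X\xrightarrow{\bU\la}\bU R\bT X$ is a weak equivalence, where $\la$ is a fibrant replacement functor in $\sM$. The first clause is precisely the statement that $\sD$ is a reflecting set in the sense of \myref{refcre}(i), so (i) is a transcription of this criterion into the present notation.

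For part (ii), observe that a creating set is a fortiori a reflecting set, so the reflecting hypothesis of (i) is automatic. Moreover $\bU$ creates weak equivalences, and $\la\colon \bT X\rtarr R\bT X$ is a weak equivalence in $\sM$ by construction, so $\bU\la$ is always a weak equivalence in $\mathbf{Pre}(\sD,\sV)$. Hence, by the two-out-of-three property, the composite in (i) is a weak equivalence if and only if its first factor $\et\colon X\rtarr\bU\bT X$ is; substituting this into (i) yields (ii).

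For part (iii), recall that a Quillen adjunction passes to a derived adjunction $(\mathbb{L}\bT,\mathbb{R}\bU)$ of homotopy categories. By \cite[1.3.13]{Hovey}, $(\bT,\bU)$ is a Quillen equivalence if and only if $\mathbb{L}\bT$, equivalently $\mathbb{R}\bU$, is an equivalence of categories; and a functor that is one adjoint of an adjunction and is also an equivalence is automatically part of an adjoint equivalence, with its given adjoint as quasi-inverse. This gives both implications of (iii). I expect no genuine obstacle in any of this: the content lies entirely in the cited general model-categorical lemmas, and the only thing that needs attention is the routine enriched-versus-underlying bookkeeping noted at the outset.
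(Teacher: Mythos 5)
Your proposal is correct and matches the paper's approach: the paper gives no separate proof, deriving all three parts directly from the standard criteria of \cite[1.3.16]{Hovey} and \cite[A.2]{MMSS} applied to the Quillen adjunction of \myref{Quill}, exactly as you do. Your extra details for (ii) (creating implies reflecting, and the two-out-of-three reduction using that $\bU\la$ is a weak equivalence) and for (iii) are the intended routine bookkeeping.
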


Thus $\sM$ can only be Quillen equivalent to the presheaf category $\mathbf{Pre} (\sD,\sV)$
when $\sD$ is a reflecting set. In outline, the verification of (i) or (ii) of \myref{Quill2} proceeds 
along much the same lines as in \myref{howto}, and again we see little point 
in an axiomatization.  Whether or not the conclusion holds, we have the following observation.

\begin{prop}\mylabel{helpful?} Let $\sD$ be a creating set of objects of $\sM$ such that
$\sM$ is a $\sD$-model category, as in \myref{Thm1}. Then the identity functor on 
$\sM$ is a left Quillen equivalence from the $\sD$-model structure on $\sM$ to the 
given model structure, and $(\bT,\bU)$ is a Quillen equivalence with respect to one of
these model structures if and only if it is a Quillen equivalence with respect to the other.
\end{prop}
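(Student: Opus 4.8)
The plan is to use that, since $\sD$ is a creating set, the $\sD$-equivalences coincide with the class $\sW$ of weak equivalences in the given structure, so the $\sD$-model structure of \myref{Thm1} --- call that model category $\sM_{\sD}$ --- and the given model category $\sM$ have literally the same weak equivalences. First I would check that $\Id\colon \sM_{\sD}\rtarr \sM$ is left Quillen. Since the two classes of weak equivalences agree, it is enough to show that every $\sD$-cofibration is a cofibration in the given structure, and then that acyclic $\sD$-cofibrations are acyclic cofibrations. For the first: by \myref{Quill}, $(\bT,\bU)$ is a Quillen adjunction to the given structure, so $\bU$ carries every acyclic fibration of $\sM$ to an acyclic level fibration of $\mathbf{Pre}(\sD,\sV)$; that is, every acyclic fibration of $\sM$ is a $\sD$-acyclic $\sD$-fibration. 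Hence a map with the LLP against all $\sD$-acyclic $\sD$-fibrations has the LLP against all acyclic fibrations of $\sM$, which is what we want. The second statement follows since a $\sD$-acyclic $\sD$-cofibration is a $\sD$-cofibration, hence a cofibration by the first part, that also lies in $\sW$.

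Next I would observe that $(\Id,\Id)\colon \sM_{\sD}\rightleftarrows \sM$ is automatically a Quillen equivalence, not just a Quillen adjunction: since $\Id$ preserves all weak equivalences (the two classes being equal), both of its derived functors are the identity on the common homotopy category, so the derived adjunction is an adjoint equivalence and we conclude by \myref{Quill2}(iii), which as noted is valid for any Quillen adjunction. Finally, the $\sD$-model structure carries the Quillen adjunction $(\bT,\bU)\colon \mathbf{Pre}(\sD,\sV)\rightleftarrows \sM_{\sD}$ of \myref{Thm1}, and composing it with $(\Id,\Id)$ gives back, on the nose, the Quillen adjunction $(\bT,\bU)\colon \mathbf{Pre}(\sD,\sV)\rightleftarrows \sM$ of \myref{Quill}, since $\Id\circ \bT = \bT$ and $\bU\circ \Id = \bU$. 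By the two-out-of-three property for Quillen equivalences \cite{Hovey} and the fact that $(\Id,\Id)$ is a Quillen equivalence, the composite $(\bT,\bU)$ to the given structure is a Quillen equivalence if and only if $(\bT,\bU)$ to the $\sD$-model structure is.

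The argument is essentially formal, and the only step that calls for real attention is the comparison of fibrations in the first paragraph: one must remember that it is \myref{Quill}, applied to the given model structure, that forces the acyclic fibrations of $\sM$ to be detected by $\bU$ and hence to be $\sD$-acyclic $\sD$-fibrations. Once that inclusion is in hand, the rest is bookkeeping with the definitions, the equality of the two weak-equivalence classes supplied by the creating-set hypothesis, and the standard two-out-of-three property of Quillen equivalences.
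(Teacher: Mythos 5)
Your proposal is correct and follows essentially the same route as the paper: both arguments rest on the coincidence of the two weak equivalence classes (the creating-set hypothesis) together with the fact that $(\bT,\bU)$ is Quillen for the given structure (\myref{Quill}), your lifting-property comparison of cofibrations being just the adjoint form of the paper's observation that retracts of relative $\bT F\mathcal{I}$- and $\bT F\mathcal{J}$-cell complexes are (acyclic) cofibrations in both structures. You also spell out the final ``if and only if'' via composition of derived adjunctions, which the paper leaves implicit; that is a sound and welcome bit of extra care, not a different method.
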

\begin{proof}
The weak equivalences of the two model structures on $\sM$ are the same,
and since $\bT$ is a Quillen left adjoint for both model structures, the relative 
$\bT F \mathcal{J}$--cell complexes are acyclic cofibrations in both. Their
retracts give all of the $\sD$-cofibrations, but perhaps only some of the
cofibrations in the given model structure, which therefore might have
more fibrations and so also more acyclic fibrations.
\end{proof}

A general difficulty in using a composite such as that in \myref{Quill2}(i)
to prove a Quillen equivalence is that the fibrant approximation $\bR$ is almost
never a $\sV$-functor and need not behave well with respect to colimits.
The following observation is relevant (and so is Baez's joke).

\begin{rem}
In topological situations, one often encounters Quillen equivalent
model categories $\sM$ and $\sN$ with different advantageous features. 
Thus suppose that $(\bF,\bG)$ is a 
Quillen equivalence $\sM\rtarr \sN$ such that $\sM$ but not necessarily 
$\sN$ is a $\sV$-model category and every object of $\sN$ is fibrant.
Let $X$ be a cofibrant object of $\mathbf{Pre} (\sD,\sV)$, as in \myref{Quill2}(i), and 
consider the diagram
\[ \xymatrix{
X\ar[r]^-{\et} \ar[dr] & \bU\bT X \ar[r]^-{\bU \la} \ar[d]^{\bU\ze}
& \bU \bR\bT X\ar[d]^{\bU\ze}_{\htp}\\
& \bU\bG\bF\bT X \ar[r]^-{\htp}_-{\bU\bG\bF\la}
& \bU\bG\bF \bR\bT X,}\]
where $\ze$ is the unit of $(\bF,\bG)$.  The arrows labeled ${\htp}$ are
weak equivalences because $\bR\bT X$ is bifibrant in $\sM$ and
$\bG\bF R\bT X$ is fibrant in $\sM$. Therefore the top composite is a weak
equivalence, as desired, if and only if the diagonal arrow $\bU\ze\com \et$ 
is a weak equivalence.  In effect, $\bG\bF\bT X$ is a fibrant approximation 
of $\bT X$, eliminating the need to consider $\bR$.  It can happen that $\bG$ 
has better behavior on colimits than $\bR$ does, and this can simplify the 
required verifications.
\end{rem}

\subsection{Stable model categories are categories of module spectra}\label{Stablesec}

In \cite{SS}, which has the same title as this section, Schwede and Shipley define a ``spectral category'' to be a small category enriched in the category $\SI\sS$ of symmetric spectra, and they understand a ``category of module spectra'' to be a presheaf category of the form 
$\mathbf{Pre} (\sD,\SI\sS)$ for some spectral category $\sD$.  Up to notation, their context is the same as the context of our \S1 and \S2, but restricted to  $\sV=\SI\sS$.  In particular, 
they give an answer to that case of \myref{quest3}, which we repeat.

\begin{quest}\mylabel{quest3too} Suppose that $\sM$ is a $\sV$-model category,
where $\sV$ is a stable model category.  When is $\sM$ Quillen equivalent to 
$\mathbf{Pre} (\sD,\sV)$, where $\sD$ is the full $\sV$-subcategory of $\sM$ given
by some well chosen set of objects $d\in \sM$?
\end{quest}

To say that $\sV$ is stable just means that $\sV$ is pointed and that the suspension functor 
$\SI$ on $\text{Ho}\sV$ is an equivalence.  It follows 
that $\text{Ho}\sV$ is triangulated \cite[\S7.2]{Hovey}.  It also follows 
that any $\sV$-model category $\sM$ is again stable and therefore $\text{Ho}\sM$ is triangulated.  
This holds since the suspension functor $\SI$ on $\text{Ho}\sM$ is equivalent to the derived tensor with the invertible object $\SI \mathbf{I}$ of $\text{Ho}\sV$.  

We here reconsider the work of Schwede and Shipley \cite{SS} and the later related work of Dugger \cite{Dugger} 
from our perspective.  {Schwede and Shipley} start with a stable model category $\sM$. They do not assume that it is a
$\SI\sS$-model category (which they call a ``spectral model category'') {and they are not concerned
with any other enrichment that $\sM$ might have.} Under appropriate hypotheses
on $\sM$, Hovey \cite{Hovey} defined the category $\SI\sM$ of symmetric spectra in $\sM$ and proved both that it
is a $\SI\sS$-model category and that it is Quillen equivalent to $\sM$ \cite[8.11, 9.1]{Hovey}.   Under
significantly weaker hypotheses on $\sM$, Dugger \cite[5.5]{Dugger} observed that an application of his
earlier work on presentations of model categories \cite{Dug} implies that $\sM$ is Quillen equivalent
to a model category $\sN$ that satisfies the hypotheses needed for Hovey's results.  

By the main result of Schwede and Shipley, \cite[3.9.3]{SS}, when $\sM$ and hence $\sN$ has a compact set of generators (see 
\myref{compactgen} below), $\SI\sN$ is Quillen equivalent to a presheaf category $\mathbf{Pre} (\sE,\SI\sS)$ for a full 
$\SI\sS$-subcategory $\sE$ of $\SI\sN$.  Dugger proves that one can pull back the $\SI\sS$-enrichment of 
 $\SI\sN$ along the two Quillen equivalences to obtain a $\SI\sS$-model category structure on 
$\sM$ itself.  Pulling back $\sE$ gives a full $\SI\sS$-subcategory $\sD$ of $\sM$ such that $\sM$ is Quillen equivalent to 
$\mathbf{Pre} (\sD,\SI\sS)$.  In a sequel to \cite{SS}, Schwede and Shipley \cite{SS2} show that the conclusion can be 
transported along changes of $\sV$ to any of the other standard modern model categories of spectra.

{However, stable model categories $\sM$ often appear in nature as $\sV$-enriched in an appropriate stable category 
$\sV$ other than $\SI\sS$,  and we shall work from that starting point.   It is then natural to model $\sM$ by presheaves with
values in $\sV$, starting with an appropriate full $\sV$-subcategory $\sD$ of $\sM$.  We are especially interested in finding 
explicit simplified models for $\sD$. For that purpose, it is most convenient to work directly with the given enrichment on $\sM$, not on 
some enriched category Quillen equivalent to $\sM$.  That is a central point of the sequel \cite{GM2}, where we give a convenient 
presheaf model for the category of $G$-spectra when $G$ is a finite group.}

Philosophically, it seems to us that when one starts with a nice 
$\sV$-enriched model category $\sM$, there is little if any gain in switching from $\sV$ to 
$\SI\sS$ or to any other preconceived choice.  In fact, with the switch, it is not 
obvious how to compare an intrinsic $\sV$-category $\sD$ living in $\sM$ to the 
associated spectral category living in $\SI \sM$.  When $\sV$ is $\SI\sS$ itself, this point is 
addressed in \cite[A.2.4]{SS}, and it is addressed more generally in \cite{Dugger, DS}.
We shall turn to the study of comparisons of this sort in \S\S\ref{changeDM},\ref{changeDMV}. However, 
it seems sensible to avoid unnecessary comparisons by working with given enrichments whenever possible. 

This perspective allows us to avoid the particular technology of symmetric spectra, 
which is at the technical heart of \cite{SS} and \cite{Dugger}.  A price is a loss 
of generality, since we ignore the problem of how to enrich a given stable model 
category if it does not happen to come in nature with a suitable enrichment: as our
sketch above indicates, that problem is a major focus of \cite{Dugger, SS}.  A gain, perhaps,
is brevity of exposition.

In any context, as already said, working stably makes it easier to prove Quillen
equivalences.  We give a $\sV$-analogue of \cite[Thm 3.3.3(iii)]{SS} 
after some recollections about triangulated categories that explain 
how such arguments work in general.

\begin{defn}\mylabel{compactgen} Let $\sA$ be a triangulated category with coproducts.  
An object $X$ of $\sA$ is compact (or small) if the natural map 
$\oplus \sA(X,Y_i)\rtarr \sA(X,\amalg Y_i)$
is an isomorphism for every set of objects $Y_i$. A set $\sD$ of objects
generates $\sA$ if a map $f\colon X\rtarr Y$ is an isomorphism 
if and only if $f_*\colon \sA(d,X)_*\rtarr \sA(d,Y)_*$ is an isomorphism 
for all $d\in \sD$.  We write $\sA(-,-)$ and 
$\sA(-,-)_*$ for the maps and graded maps in $\sA$. We use graded maps
so that generating sets need not be closed under $\SI$. We say that $\sD$ 
is compact if each $d\in \sD$ is compact.
\end{defn}

We emphasize the distinction between generating sets in triangulated
categories and the sets of domains (or cofibers) of generating sets 
of cofibrations in model categories.  The former generating sets can 
be much smaller. For example, in a good model category of spectra, one 
must use all spheres $S^n$ to obtain a generating set of cofibrations, 
but a generating set for the homotopy category need only contain 
$S=S^0$.  The difference is much more striking for parametrized 
spectra \cite[13.1.16]{MS}.
  
The following result is due to Neeman \cite[3.2]{Nee}. Recall that a 
localizing subcategory of a triangulated category is a sub triangulated
category that is closed under coproducts; it is necessarily also 
closed under isomorphisms.

\begin{lem}\mylabel{smallest} 
The smallest localizing subcategory
of $\sA$ that contains a compact generating set $\sD$ 
is $\sA$ itself.
\end{lem}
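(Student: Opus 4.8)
\emph{Proof proposal.} The plan is to show directly that every object $Y$ of $\sA$ lies in the smallest localizing subcategory $\mathcal{L}$ of $\sA$ that contains $\sD$. The idea is to build, out of (de)suspensions of objects of $\sD$, an object $L\in\mathcal{L}$ together with a map $p\colon L\to Y$ that is a ``$\sD$-equivalence'', meaning that $p_*\colon\sA(d,L)_*\to\sA(d,Y)_*$ is an isomorphism for every $d\in\sD$. Completing $p$ to a triangle $L\to Y\to C\to\SI L$ and applying the long exact sequences $\sA(d,-)_*$ then forces $\sA(d,C)_*=0$ for all $d\in\sD$, so $C=0$ because $\sD$ generates $\sA$; hence $p$ is an isomorphism and $Y\cong L\in\mathcal{L}$. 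As $Y$ is arbitrary, $\mathcal{L}=\sA$.

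The object $L$ is produced by the classical cell-attachment (Postnikov--Whitehead) construction, phrased with coproducts and cones. Set $L_0=\coprod\SI^n d$, the coproduct over the set of all triples $(d,n,f)$ with $d\in\sD$, $n\in\mathbb{Z}$, and $f\in\sA(\SI^n d,Y)$, with the map $p_0\colon L_0\to Y$ whose $(d,n,f)$-component is $f$. Then $L_0\in\mathcal{L}$ (a triangulated subcategory is closed under (de)suspension, and $\mathcal{L}$ is closed under coproducts), and $(p_0)_*$ is surjective on $\sA(\SI^n d,-)$ for all $d$ and $n$. Inductively, given $p_i\colon L_i\to Y$, form $K_i=\coprod\SI^m d$, the coproduct over all triples $(d,m,h)$ with $h\in\sA(\SI^m d,L_i)$ and $p_i\circ h=0$, with the tautological map $k_i\colon K_i\to L_i$. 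Since $p_i\circ k_i=0$, the triangulated axioms let us choose $L_{i+1}=\operatorname{cone}(k_i)\in\mathcal{L}$ together with a map $p_{i+1}\colon L_{i+1}\to Y$ whose composite with $L_i\to L_{i+1}$ is $p_i$. Finally put $L=\operatorname{hocolim}_i L_i$, the cone of $1-\mathrm{shift}\colon\coprod_i L_i\to\coprod_i L_i$; this lies in $\mathcal{L}$ since $\mathcal{L}$ is closed under coproducts and cones, and the $p_i$ induce a map $p\colon L\to Y$.

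It remains to check that $p$ is a $\sD$-equivalence, and here compactness of the objects of $\sD$ enters: each $\SI^n d$ is compact, so the standard computation with the defining triangle of the homotopy colimit gives $\sA(\SI^n d,L)\cong\colim_i\sA(\SI^n d,L_i)$. Surjectivity of $p_*$ on $\sA(\SI^n d,-)$ is inherited from stage $0$. For injectivity, a kernel class is represented by some $g_i\colon\SI^n d\to L_i$; compatibility of the $p_i$ with $p$ forces $p_i\circ g_i=0$, so $(d,n,g_i)$ is one of the triples indexing $K_i$, whence $g_i$ maps to $0$ in $L_{i+1}$ (consecutive maps in the triangle defining $L_{i+1}$), and therefore to $0$ in $L$. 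Thus $p_*\colon\sA(\SI^n d,L)\to\sA(\SI^n d,Y)$ is an isomorphism for all $d\in\sD$ and all $n$, which is what was needed above. (This is essentially Neeman's argument \cite[3.2]{Nee}; alternatively one can invoke Brown representability for the compactly generated triangulated category $\mathcal{L}$ to produce a right adjoint to the inclusion $\mathcal{L}\hookrightarrow\sA$ and identify it, but the cellular construction is self-contained.)

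The only real work lies in the homotopy-colimit bookkeeping: justifying $\sA(X,\operatorname{hocolim}_i L_i)\cong\colim_i\sA(X,L_i)$ for $X$ compact (a $\lim^1$-free exact-sequence chase off the defining triangle, $\lim^1$-free precisely because compactness turns $\sA(X,\coprod L_i)$ into $\bigoplus\sA(X,L_i)$), and keeping the maps $p_i$, the transition maps $L_i\to L_{i+1}$, and the identification of $\sA(\SI^n d,L)$ with the colimit mutually coherent, so that ``killed at stage $i+1$'' really does entail ``zero in $L$''. None of this is deep once it is set up carefully, and with it in place the compactness hypothesis and the generating hypothesis are each used in a single line.
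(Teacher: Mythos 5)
Your proof is correct, and it is exactly the cellular-tower/homotopy-colimit argument of Neeman that the paper itself invokes without proof by citing \cite[3.2]{Nee}: the paper states the lemma as a recalled result, so your write-up simply supplies the standard argument behind that citation. The key points (compactness giving $\sA(\SI^n d,\mathrm{hocolim}_i L_i)\iso \colim_i\sA(\SI^n d,L_i)$ via the $1-\mathrm{shift}$ triangle, killing kernel classes at the next stage, and then using the generating hypothesis on the cofiber of $p$) are all handled correctly.
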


This result is used in tandem with the following one to prove equivalences.

\begin{lem}\label{wrap}
Let $E, F\colon \sA\rtarr \sB$ be exact and coproduct-preserving 
functors between triangulated categories and let $\ph\colon E\rtarr F$ 
be a natural transformation that commutes with $\SI$. Then the full 
subcategory of $\sA$ consisting of those objects $X$ for which $\ph$ is 
an isomorphism is localizing.
\end{lem}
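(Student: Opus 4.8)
The plan is to verify directly that $\sC$, the full subcategory of $\sA$ whose objects are those $X$ for which $\ph_X\colon EX\rtarr FX$ is an isomorphism, satisfies the defining closure properties of a localizing subcategory: closure under isomorphisms, under $\SI$ and $\SI^{-1}$, under completion of distinguished triangles, and under arbitrary coproducts.

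The coproduct and isomorphism closures are formal. If $X\iso X'$, then naturality of $\ph$ identifies $\ph_X$ with $\ph_{X'}$ as a map in the arrow category of $\sB$, so one is invertible precisely when the other is; and $0\in\sC$ since $E$ and $F$ are additive. If $X_i\in\sC$ for all $i$ in some index set, then, using that $E$ and $F$ preserve coproducts, $\ph_{\coprod X_i}$ is, under the canonical identifications $E(\coprod X_i)\iso\coprod EX_i$ and $F(\coprod X_i)\iso\coprod FX_i$, the coproduct $\coprod \ph_{X_i}$ of isomorphisms, hence an isomorphism; thus $\coprod X_i\in\sC$. In particular $\sC$ is closed under finite direct sums and so is additive. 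Closure under the shift is where the hypothesis that $\ph$ commutes with $\SI$ enters: under the structure isomorphisms $E\SI\iso\SI E$ and $F\SI\iso\SI F$ attached to the exact functors $E$ and $F$, one has $\ph_{\SI X}=\SI\ph_X$, so $\ph_X$ invertible forces $\ph_{\SI X}$ invertible, and likewise for $\SI^{-1}$ since $\SI$ is an autoequivalence of $\sA$.

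The one genuinely triangulated point is closure under triangle completion. Let $X\rtarr Y\rtarr Z\rtarr\SI X$ be a distinguished triangle in $\sA$ with two of $X$, $Y$, $Z$ lying in $\sC$. Since $E$ and $F$ are exact, applying them yields distinguished triangles $EX\rtarr EY\rtarr EZ\rtarr\SI EX$ and $FX\rtarr FY\rtarr FZ\rtarr\SI FX$ in $\sB$. Naturality of $\ph$ makes the first two squares of the resulting ladder commute, and the compatibility of $\ph$ with $\SI$ makes the third square commute; hence $(\ph_X,\ph_Y,\ph_Z)$ is a morphism of distinguished triangles. By the five lemma for distinguished triangles — a morphism of triangles with two of its three components isomorphisms has its third component an isomorphism — the remaining instance of $\ph$ is an isomorphism, so the third object of the original triangle lies in $\sC$.

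Combining these observations, $\sC$ is a triangulated subcategory of $\sA$ closed under coproducts, i.e., a localizing subcategory, as claimed. I do not expect a real obstacle here: the only step with content beyond bookkeeping is the triangle-completion step, and there the work is already done by the exactness of $E$ and $F$ — which promotes a map of ``source'' triangles to a map of ``target'' triangles — together with the standard five lemma in triangulated categories.
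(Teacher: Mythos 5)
Your verification is correct, and it is the standard argument: the paper itself states this lemma without proof, treating the check (closure under isomorphisms, coproducts, shifts via the compatibility of $\ph$ with $\SI$, and triangle completion via exactness of $E$ and $F$ plus the five lemma for morphisms of distinguished triangles) as routine, which is exactly what you carried out. No gaps; the only point of substance is the triangle-completion step, and you handled it as intended.
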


When proving adjoint equivalences, the exact and coproduct-preserving 
hypotheses in the previous result are dealt with using the following 
observations (see \cite[3.9 and 5.1]{Nee2} and \cite[7.4]{FHM}). Of
course, a left adjoint obviously preserves coproducts.

\begin{lem}\mylabel{preserves} 
Let $(L,R)$ be an adjunction between triangulated
categories $\sA$ and $\sB$.  Then $L$ is exact if and only if 
$R$ is exact.  Assume that $L$ is additive and $\sA$ has a compact 
set of generators $\sD$. If $R$ preserves coproducts, then $L$ preserves 
compact objects.  Conversely, if $L(d)$ is compact for $d\in \sD$, then $R$ preserves coproducts.
\end{lem}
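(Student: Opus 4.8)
The plan is to treat the three assertions separately; the first is the standard fact that the adjoint of an exact functor between triangulated categories is exact, and the other two are formal, being consequences of the adjunction together with the compactness hypotheses.

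For the first assertion I would simply cite \cite[3.9, 5.1]{Nee2} and \cite[7.4]{FHM}. If one prefers to spell it out, the statement is symmetric in $L$ and $R$, so assume $L$ exact. Then $R$ commutes with $\SI$ by the Yoneda lemma applied to the natural chain
\[ \sA(X,R\SI Y)\iso\sB(LX,\SI Y)\iso\sB(\SI^{-1}LX,Y)\iso\sB(L\SI^{-1}X,Y)\iso\sA(\SI^{-1}X,RY)\iso\sA(X,\SI RY) \]
(using $L\SI\iso\SI L$, hence $L\SI^{-1}\iso\SI^{-1}L$, and that $\SI$ is an equivalence with inverse $\SI^{-1}$). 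That $R$ carries distinguished triangles to distinguished triangles is then the usual argument: complete the $R$-image of the first morphism of a triangle in $\sB$ to a triangle in $\sA$, map its $L$-image into the original triangle via the counit $LR\rtarr\Id$, and use the octahedral axiom to identify the remaining vertex.

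For the second assertion, let $X$ be compact in $\sA$ and let $\{Y_i\}$ be a set of objects of $\sB$. I would check that the canonical comparison map $\oplus_i\sB(LX,Y_i)\rtarr\sB(LX,\amalg_i Y_i)$ coincides with the composite
\[ \oplus_i\sB(LX,Y_i)\iso\oplus_i\sA(X,RY_i)\iso\sA(X,\amalg_i RY_i)\iso\sA(X,R(\amalg_i Y_i))\iso\sB(LX,\amalg_i Y_i), \]
where the first and last isomorphisms are the adjunction, the second is the compactness of $X$, and the third is the hypothesis that $R$ preserves coproducts. This composite is an isomorphism, so $LX$ is compact; the only thing to verify is the routine naturality diagram showing that the composite really is the comparison map.

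For the converse, I would show that the canonical map $\amalg_i RY_i\rtarr R(\amalg_i Y_i)$ is an isomorphism. Since $\sD$ generates $\sA$, it suffices to show it induces an isomorphism on $\sA(d,-)_*$ for each $d\in\sD$, and this follows from the chain
\[ \sA(d,\amalg_i RY_i)_*\iso\oplus_i\sA(d,RY_i)_*\iso\oplus_i\sB(Ld,Y_i)_*\iso\sB(Ld,\amalg_i Y_i)_*\iso\sA(d,R(\amalg_i Y_i))_*, \]
in which the first isomorphism uses that $d$ is compact (and that $\SI$, being an equivalence, preserves coproducts), the second and fourth are the adjunction, and the third uses the hypothesis that $Ld$ is compact; once more a naturality check identifies the composite with the map being tested. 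The one point here that needs care — and really the only obstacle, as the rest is pure formalism — is handling the graded maps occurring in the definition of a generating set: to pass a shift $\SI^n$ through $R$ throughout this chain one uses that $R$, equivalently $L$ by the first assertion, is exact and so commutes with $\SI$, which is the case in the situations where this lemma is applied. With that understood, both of the last two assertions reduce to manipulation of the adjunction and the two compactness hypotheses.
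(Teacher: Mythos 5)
The paper offers no proof of this lemma at all: it is stated as a recollection from the literature, with the reader referred to \cite[3.9 and 5.1]{Nee2} and \cite[7.4]{FHM}, and with only the remark that a left adjoint obviously preserves coproducts. Your proposal supplies the standard arguments, and they are correct. The two adjunction chains are exactly the expected formal proofs, and the point you flag at the end is the one genuine subtlety: since \myref{compactgen} defines generation via graded maps precisely so that $\sD$ need not be closed under $\SI$, the third isomorphism in your last chain requires $L\SI^n d$, not merely $Ld$, to be compact, and hence requires $L$ to commute with $\SI$ (equivalently, one could assume $\sD$ closed under shifts). As you observe, this costs nothing where the lemma is used: in the proof of \myref{Stablethm} the derived functors of $\bT$ and $\bU$ are shown to be exact before the lemma is invoked, so $L\SI^n d\iso \SI^n Ld$ is compact and the argument closes. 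It would be worth making that dependence explicit if the proof were to be included.

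One small correction to your optional spelled-out argument for the first assertion: after completing $RX\rtarr RY$ to a triangle with third vertex $W$ and mapping its $L$-image to the given triangle via the counit, the comparison map $LW\rtarr Z$ is produced by the fill-in axiom (TR3), and its adjoint $W\rtarr RZ$ is then shown to be an isomorphism by applying $\sA(A,-)$, the adjunction, and the five lemma, followed by Yoneda; the octahedral axiom is not what identifies the remaining vertex. Alternatively, simply citing \cite{Nee2} and \cite{FHM} for this assertion, as you first suggest, matches what the paper itself does.
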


Returning to our model theoretic context, let $\sD$ be any small $\sV$-category, not necessarily related to
any given $\sM$.  To apply the results above, we need a compact generating 
set in $\text{Ho}\mathbf{Pre} (\sD,\sV)$, and for that we need a compact generating 
set in $\text{Ho}\sV$.  It is often the case in applications that the unit object $\mathbf{I}$ 
is itself a compact generating set, but it is harmless to start out more generally.  
We have in mind equivariant applications where that would fail.

\begin{lem}\mylabel{complem} Let $\text{Ho}\sV$ have a compact generating set 
$\sC$ and define $F\sC$ to be the set of objects 
$F_dc\in\text{Ho}\mathbf{Pre} (\sD,\sV)$, where $c\in\sC$ and $d\in\sD$.  Assume either that
cofibrant presheaves are levelwise cofibrant or that any coproduct of weak 
equivalences in $\sV$ is a weak equivalence. Then $F\sC$ is a compact generating set. 
\end{lem}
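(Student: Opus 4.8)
The plan is to route everything through the Quillen adjunction $(F_d,\text{ev}_d)$ between $\sV$ and $\mathbf{Pre}(\sD,\sV)$ supplied by \myref{Fb} with $\sM=\sV$ (here $\text{ev}_d$ is right Quillen because fibrations and acyclic fibrations in $\mathbf{Pre}(\sD,\sV)$ are levelwise), together with the fact that weak equivalences in $\mathbf{Pre}(\sD,\sV)$ are also levelwise. Since $\sV$ is stable, the $\sV$-model category $\mathbf{Pre}(\sD,\sV)$ is stable. The functor $\text{ev}_d$ preserves \emph{all} weak equivalences, so it is its own right derived functor, and $\text{ev}_d\colon \text{Ho}\mathbf{Pre}(\sD,\sV)\rtarr \text{Ho}\sV$ is exact, being (the derived functor of) a right Quillen functor between stable model categories. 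We may assume the objects of $\sC$ are cofibrant in $\sV$; then $F_dc$ is cofibrant in $\mathbf{Pre}(\sD,\sV)$, and the derived adjunction, with exactness taking care of the grading, gives a natural isomorphism of graded abelian groups
\[ \text{Ho}\mathbf{Pre}(\sD,\sV)(F_dc,X)_{*}\iso \text{Ho}\sV(c,X_d)_{*} \]
for every $X$, with $X_d$ computed from any representative. This identity carries the whole argument.

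First I would show that $F\sC$ generates $\text{Ho}\mathbf{Pre}(\sD,\sV)$. By definition of the level model structure a map $f$ of presheaves is a weak equivalence iff each $f_d$ is, so a map $f$ in $\text{Ho}\mathbf{Pre}(\sD,\sV)$ is an isomorphism iff each $f_d$ is an isomorphism in $\text{Ho}\sV$. By the displayed isomorphism, $f_{*}$ is an isomorphism on $\text{Ho}\mathbf{Pre}(\sD,\sV)(F_dc,-)_{*}$ for all $c\in\sC$ and $d\in\sD$ iff $(f_d)_{*}$ is an isomorphism on $\text{Ho}\sV(c,-)_{*}$ for all $c$ and $d$; since $\sC$ generates $\text{Ho}\sV$ this holds iff each $f_d$ is an isomorphism, i.e. iff $f$ is an isomorphism. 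Hence $F\sC$ generates.

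For compactness of $F_dc$, by the displayed isomorphism it suffices to show that $\text{ev}_d\colon \text{Ho}\mathbf{Pre}(\sD,\sV)\rtarr \text{Ho}\sV$ preserves arbitrary coproducts, since compactness of $c$ in $\text{Ho}\sV$ then yields
\[ \text{Ho}\mathbf{Pre}(\sD,\sV)\Big(F_dc,\coprod_i X^i\Big)_{*}\iso \text{Ho}\sV\Big(c,\coprod_i X^i_d\Big)_{*}\iso \bigoplus_i\text{Ho}\sV(c,X^i_d)_{*}\iso \bigoplus_i\text{Ho}\mathbf{Pre}(\sD,\sV)(F_dc,X^i)_{*}, \]
compatibly with the coproduct inclusions, so that the canonical map is an isomorphism. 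A homotopy coproduct $\coprod_i X^i$ in $\mathbf{Pre}(\sD,\sV)$ is modelled by the categorical coproduct $\coprod_i QX^i$ of cofibrant replacements, which is formed levelwise, so $\text{ev}_d$ of this coproduct is $\coprod_i (QX^i)_d$; the point to check is that $\coprod_i (QX^i)_d$ represents the homotopy coproduct $\coprod_i X^i_d$ in $\text{Ho}\sV$. If cofibrant presheaves are levelwise cofibrant, each $(QX^i)_d$ is cofibrant in $\sV$ and we are done. Otherwise, choose cofibrant replacements $Q\big((QX^i)_d\big)\rtarr (QX^i)_d$ in $\sV$; their coproduct $\coprod_i Q\big((QX^i)_d\big)\rtarr \coprod_i (QX^i)_d$ is a coproduct of weak equivalences, hence a weak equivalence by hypothesis, and its source is the homotopy coproduct, so the target represents it as well. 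Either way $\text{ev}_d$ preserves coproducts.

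I expect the only real obstacle to be this last point: identifying $\text{ev}_d$ of a homotopy coproduct with the homotopy coproduct of the $\text{ev}_d$'s in $\text{Ho}\sV$, i.e. that evaluation commutes with homotopy coproducts, which is precisely the role played by the two alternative hypotheses on $\sV$ or on cofibrant presheaves. Everything else is a formal consequence of the $(F_d,\text{ev}_d)$ adjunction and the levelwise nature of the model structure.
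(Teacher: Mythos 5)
Your proposal is correct and follows essentially the same route as the paper: pass through the Quillen adjunction $(F_d,\text{ev}_d)$, reduce generation to the fact that $\text{ev}_d$ creates (hence detects) weak equivalences, and reduce compactness to the observation that the levelwise coproduct of bifibrant (or cofibrant) presheaves represents the derived coproduct in $\text{Ho}\sV$, which is exactly where the two alternative hypotheses enter in the paper's argument as well.
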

\begin{proof} Since this is a statement about homotopy categories, we may assume without 
loss of generality that each $c\in\sC$ is cofibrant in $\sV$.
Since the weak equivalences and fibrations in $\mathbf{Pre} (\sD,\sV)$ are defined levelwise,
they are preserved by $\text{ev}_d$. Therefore $(F_d,\text{ev}_d)$ is a Quillen adjunction,
hence the adjunction passes to homotopy categories.  Since coproducts
in $\mathbf{Pre} (\sD,\sV)$ are defined levelwise, they commute with $\text{ev}_d$. Therefore  
the map
\[ \oplus_i \, \text{Ho}\mathbf{Pre} (\sD,\sV)(F_dc,Y_i)\rtarr \text{Ho}\mathbf{Pre} (\sD,\sV)(F_dc,\amalg_i \, Y_i)\]
can be identified by adjunction with the isomorphism 
\[ \oplus_i \, \text{Ho}\sV(c, \text{ev}_d Y_i)\rtarr \text{Ho}\sV(c,\amalg_i \, \text{ev}_d Y_i),\]
where the $Y_i$ are bifibrant presheaves. The identification of sources is immediate. For 
the identification of targets, either of our alternative assumptions ensures that the coproduct 
$\amalg \text{ev}_d Y_i$
in $\sV$ represents the derived coproduct $\amalg \text{ev}_d Y_i$ in $\text{Ho}\sV$.
Since the functors $\text{ev}_d$ create the weak 
equivalences in $\mathbf{Pre} (\sD,\sV)$, it is also clear by adjunction that $F\sC$ 
generates $\text{Ho}\mathbf{Pre} (\sD,\sV)$ since $\sC$ generates $\text{Ho}\sV$.
\end{proof}

By \myref{Fb}, if $\sC = \{\mathbf{I}\}$, then $F\sC$ can be identified with $\{\bY(d)\}$.
Switching context from the previous section by replacing reflecting sets by
generating sets, we have the following result. When $\sV$ is the category of symmetric
spectra, it is Schwede and Shipley's result \cite[3.9.3(iii)]{SS}.  We emphasize for
use in the sequel \cite{GM2} that our general version can apply even when $\mathbf{I}$ is not 
cofibrant and $\sV$ does not satisfy the monoid axiom. We fix a cofibrant approximation 
$\bQ\mathbf{I}\rtarr \mathbf{I}$. 

\begin{thm}\mylabel{Stablethm} Let $\sM$ be a $\sV$-model category, where
$\sV$ is stable and $\{\mathbf{I}\}$ is a compact generating set in $\text{Ho}\sV$.
Let $\sD$ be a full $\sV$-subcategory of bifibrant objects of $\sM$ such that 
$\mathbf{Pre} (\sD,\sV)$ is a model category and the set of objects of $\sD$ 
is a compact generating set in $\text{Ho}\sM$.
Assume the following two conditions.
\begin{enumerate}[(i)]
\item Either $\mathbf{I}$ is cofibrant in $\sV$ or every object
of $\sM$ is fibrant and the induced map $F_d\bQ{\mathbf{I}} \rtarr F_d\mathbf{I}$ 
is a weak equivalence for each $d\in\sD$. 
\item Either cofibrant presheaves are level cofibrant or
coproducts of weak equivalences in $\sV$ are weak equivalences.
\end{enumerate}
Then $(\bT,\bU)$ is a Quillen equivalence between $\mathbf{Pre} (\sD,\sV)$ 
and $\sM$. 
\end{thm}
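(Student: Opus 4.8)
The plan is to apply \myref{Quill2}(iii), reducing the claim to showing that $(\bT,\bU)$ induces an adjoint equivalence of homotopy categories; this I would establish by the standard triangulated yoga of \myref{smallest}, Lemma~\ref{wrap} and \myref{preserves}, so that once the inputs are assembled the rest is formal. First, $(\bT,\bU)$ is a Quillen adjunction by \myref{Quill} (the hypotheses \myref{assess} hold, since the objects of $\sD$ are bifibrant and $\mathbf{Pre}(\sD,\sV)$ is a model category), hence descends to an adjunction between $\text{Ho}\mathbf{Pre}(\sD,\sV)$ and $\text{Ho}\sM$, whose functors and unit and counit I continue to write $\bT$, $\bU$, $\et$, $\epz$. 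Since $\sV$ is stable, so are $\mathbf{Pre}(\sD,\sV)$ and $\sM$, so both homotopy categories are triangulated with coproducts; the derived $\bT$ is exact, being a left derived left Quillen functor between stable model categories, whence the derived $\bU$ is exact by \myref{preserves}, and $\et$, $\epz$ commute with $\SI$ because $(\bT,\bU)$ is a $\sV$-adjunction.

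The next step is to understand the derived functors on well chosen generating sets. By \myref{cute}, $\bT(F_d\mathbf{I})=\bT\bY(d)\iso \de d\odot \mathbf{I}\iso d$, and more generally $\bT(F_d\bQ\mathbf{I})\iso d\odot\bQ\mathbf{I}$, for $d\in\sD$. When $\mathbf{I}$ is cofibrant, $\bY(d)=F_d\mathbf{I}$ is cofibrant, so the derived $\bT$ sends $\bY(d)$ to $d$; when $\mathbf{I}$ is not cofibrant, hypothesis (i) says $F_d\bQ\mathbf{I}\rtarr F_d\mathbf{I}$ is a weak equivalence, exhibiting $F_d\bQ\mathbf{I}$ as a cofibrant approximation of $\bY(d)$, and the derived $\bT$ then sends $\bY(d)$ to $d\odot\bQ\mathbf{I}$, which (using the other clause of (i)) is isomorphic to $d$ in $\text{Ho}\sM$. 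In all cases the derived $\bT$ carries $\bY(d)$ to $d$. By hypothesis the objects of $\sD$ form a compact generating set of $\text{Ho}\sM$, and by \myref{complem} — whose standing hypothesis is exactly condition (ii) — the objects $F_d\bQ\mathbf{I}$, identified via (i) with the $\bY(d)$, form a compact generating set of $\text{Ho}\mathbf{Pre}(\sD,\sV)$. Since the derived $\bT$ sends these compact generators to the compact objects $d$, \myref{preserves} shows that the derived $\bU$ preserves coproducts.

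Now I claim the unit $\et\colon X\rtarr \bU\bT X$ is an isomorphism in $\text{Ho}\mathbf{Pre}(\sD,\sV)$ for every $X$. The functors $\Id$ and $\bU\bT$ are exact and preserve coproducts — $\bT$ as a left adjoint, $\bU$ by the previous paragraph — and $\et$ commutes with $\SI$, so by Lemma~\ref{wrap} the full subcategory of $X$ on which $\et$ is an isomorphism is localizing. It contains each $\bY(d)$: when $\mathbf{I}$ is cofibrant this is the last assertion of \myref{cute} (there the underived unit ${\sD}(e,d)\rtarr \ul{\sM}(e,d)$ is the identity, and $d$ is fibrant so $\bU$ computes its derived value at $d$), and in general it follows by combining \myref{cute} with (i). Since $\{\bY(d)\}$ is a compact generating set, \myref{smallest} forces this localizing subcategory to be all of $\text{Ho}\mathbf{Pre}(\sD,\sV)$, so $\et$ is always an isomorphism; equivalently, the derived $\bT$ is fully faithful. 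Its essential image is then a full triangulated subcategory of $\text{Ho}\sM$ (closed under cones, since the derived $\bT$ is full and exact) that is closed under coproducts and contains the compact generating set $\{d\}$, so by \myref{smallest} it is all of $\text{Ho}\sM$ and the derived $\bT$ is essentially surjective. A fully faithful and essentially surjective left adjoint is an equivalence with the derived $\bU$ as quasi-inverse, so $(\bT,\bU)$ induces an adjoint equivalence of homotopy categories, and \myref{Quill2}(iii) completes the proof.

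I expect the one genuinely delicate point to be the identification of the derived $\bT$ on the generators $\bY(d)$, together with the matching verification that $\et$ is an isomorphism there; everything downstream is bookkeeping with the triangulated machinery. When $\mathbf{I}$ is cofibrant this is immediate from \myref{cute}. The whole force of hypothesis (i), and of its alternative form with every object of $\sM$ fibrant, is to push this identification through when $\mathbf{I}$ is not cofibrant, where $\bY(d)$ need no longer be cofibrant and one must compare $\bT(F_d\bQ\mathbf{I})=d\odot\bQ\mathbf{I}$ with $d$ and control the natural map $\om$ of (\ref{keymap}) that appears in \myref{cute}. Hypothesis (ii) plays the separate, purely formal role of supplying the hypotheses of \myref{complem}, so that $\{\bY(d)\}$ really is a compact generating set of $\text{Ho}\mathbf{Pre}(\sD,\sV)$.
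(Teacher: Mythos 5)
Your overall strategy is the paper's own: reduce via \myref{Quill2}(iii) to showing the derived adjunction is an adjoint equivalence, identify $\{F_d\mathbf{I}\}$ (via \myref{complem} and hypothesis (ii)) and $\{d\}$ as compact generating sets, use \myref{cute} and \myref{preserves} to see that the derived $\bU$ preserves coproducts, and then run Lemma~\ref{wrap} and \myref{smallest}. Your endgame differs only cosmetically: you check the unit on generators and deduce essential surjectivity of the derived $\bT$ from fullness, exactness, and \myref{smallest}, whereas the paper also checks the counit $\epz\colon \bT\bU d\rtarr d$ on the generators $d$; either variant is fine.

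The gap is at the point you flag but do not carry out. When $\mathbf{I}$ is not cofibrant, you assert that the derived unit is an isomorphism at $\bY(d)$ ``by combining \myref{cute} with (i),'' but (i) alone does not give this, and this is the only genuinely nontrivial computation in the proof. The derived unit at $\bY(d)$ is represented by $\et$ evaluated on the cofibrant approximation $F_d\bQ\mathbf{I}$, i.e.\ by the maps $\ul{\sM}(e,d)\otimes \bQ\mathbf{I}\rtarr \ul{\sM}(e,d\odot \bQ\mathbf{I})$ of \myref{cute}; to see these are weak equivalences one compares with the case $V=\mathbf{I}$ (where $\et$ is an isomorphism) via the naturality square in $V$, whose left vertical $\ul{\sM}(e,d)\otimes\bQ\mathbf{I}\rtarr\ul{\sM}(e,d)\otimes\mathbf{I}$ is a weak equivalence precisely by hypothesis (i), and whose right vertical $\ul{\sM}(e,d\odot\bQ\mathbf{I})\rtarr\ul{\sM}(e,d\odot\mathbf{I})$ is a weak equivalence by \myref{Cutesy}, using that $e$ is cofibrant, that every object of $\sM$ is fibrant (so $d\odot\bQ\mathbf{I}$ is fibrant), and that $d\odot\bQ\mathbf{I}\rtarr d\odot\mathbf{I}\iso d$ is a weak equivalence. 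That last weak equivalence comes from the unit axiom in the definition of a $\sV$-model category (\myref{monoidal}), not from ``the other clause of (i)'' as you write; it is also what makes $\bT$ of the approximation $F_d\bQ\mathbf{I}\rtarr F_d\mathbf{I}$ the weak equivalence $d\odot\bQ\mathbf{I}\rtarr d$ that you use for the generator identification and, implicitly, for essential surjectivity. Your write-up names the map $\om$ of (\ref{keymap}) as the thing to be controlled, but never supplies this square argument or the appeal to \myref{Cutesy} and fibrancy, so the crux of hypothesis (i) is acknowledged rather than proved.
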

\begin{proof}  In view of what we have already proven, it only remains
to show that the derived adjunction $(\bT,\bU)$ on homotopy categories 
is an adjoint equivalence. The distinguished triangles in $\text{Ho}\sM$ and 
$\text{Ho}\mathbf{Pre} (\sD,\sV)$ are generated by the cofibrations in the underlying model
categories.  Since $\bT$ preserves cofibrations, its derived functor is 
exact, and so is the derived functor of $\bU$.  We claim that \myref{preserves}
applies to show that $\bU$ preserves coproducts. By \myref{complem} and hypothesis,
$\{F_d\mathbf{I}\}$ is a compact set of generators for $\text{Ho}\mathbf{Pre} (\sD,\sV)$. To prove
the claim, we 
must show that $\{\bT F_d\mathbf{I}\}$ is a compact set of generators for $\text{Ho}\sM$. 
It suffices to show that $\bT F_d \mathbf{I} \iso d$ in $\text{Ho}\sM$, and
\myref{cute} gives that $\bT F_d \mathbf{I} \iso d$ in $\sM$. If $\mathbf{I}$ is 
cofibrant, this is an isomorphism between cofibrant objects of $\sM$. If not, 
the unit axiom for the $\sV$-model category $\sM$ gives that the induced map 
$d\odot \bQ\mathbf{I}\rtarr d\odot \mathbf{I}\iso d$ is a weak equivalence for $d\in \sD$.
Since $\bT F_d V \iso d\odot V$ for $V\in\sV$, this is a weak equivalence
$\bT F_d Q\mathbf{I}\rtarr \bT F_d \mathbf{I}$.
Either way, we have the required isomorphism in $\text{Ho}\sM$.   

Now, in view of Lemmas \ref{smallest}, \ref{wrap}, and \ref{complem}, we need only show that 
the isomorphisms $\et\colon F_d\mathbf{I}\rtarr \bU\bT F_d \mathbf{I}$ in $\mathbf{Pre} (\sD,\sV)$ and 
$\epz\colon \bT\bU d\rtarr d$ in $\sM$ given in \myref{cute} imply that their derived
maps are isomorphisms in the respective homotopy categories $\text{Ho}\mathbf{Pre} (\sD,\sV)$ and 
$\text{Ho}\sM$. Assume first that $\mathbf{I}$ is cofibrant. Then the former implication
is immediate and, since $\bU(d) = F_d(\mathbf{I})$ is cofibrant, so is the latter.

Thus assume that $\mathbf{I}$ is not cofibrant. Then to obtain $\et$ on the homotopy category
$\text{Ho}\mathbf{Pre} (\sD,\sV)$, we must replace $\mathbf{I}$ by $Q\mathbf{I}$ before applying the map 
$\et$ in $\sV$. By (\ref{eta}), when we apply $\et\colon \mathrm{Id}\rtarr \bU\bT$ 
to $F_dV$ for $V\in \sV$ and evaluate at $e$, we get a natural map 
\[ \xymatrix{ 
\et\colon {\sD}(e,d)\otimes V =\ul{\sM}(e,d)\otimes V 
\ar[r]  & \ul{\sM}(e,d\odot V)  \\} \]
that is an isomorphism when $V= \mathbf{I}$. We must show that it is a weak equivalence
when $V=\bQ\mathbf{I}$. To see this, observe that we have a commutative square 
\[ \xymatrix{
\ul{\sM}(e,d)\otimes Q\mathbf{I}  \ar[r]^-{\et} \ar[d]  & \ul{\sM}(e,d\odot Q\mathbf{I}) \ar[d]\\
\ul{\sM}(e,d)\otimes \mathbf{I}  \ar[r]_-{\et}   & \ul{\sM}(e,d\odot \mathbf{I})\\} \]
The left vertical arrow is a weak equivalence by assumption.
The right vertical arrow is a weak equivalence by \myref{Cutesy} and our assumption that 
all objects of $\sM$ are fibrant. 
Therefore $\et$ is a weak equivalence when $V=\bQ\mathbf{I}$. Similarly, to pass to the homotopy
category $\text{Ho}\sM$, we must replace $\bU(d) = F_d(\mathbf{I})$ by a cofibrant approximation
before applying $\epz$ in $\sM$. By assumption, $F_d\bQ{\mathbf{I}} \rtarr F_d\mathbf{I}$ is such 
a cofibrant approximation.  Up to isomorphism, $\bT$ takes this map to the weak equivalence
$d\odot \bQ{\mathbf{I}}\rtarr d\odot \mathbf{I}\iso d$, and the conclusion follows. 
\end{proof}

\begin{rem}\mylabel{usehow}   As discussed in \S\ref{How?}, it is possible that \myref{Muro} below can be used to replace $\sV$ by a
Quillen equivalent model category $\tilde{\sV}$  in which $\mathbf{I}$ is cofibrant, so that (i) holds automatically.
\end{rem}

\begin{rem} Since the functor $F_d$ is strong symmetric monoidal, the assumption that 
$F_d\bQ{\mathbf{I}} \rtarr F_d\mathbf{I}$ 
is a weak equivalence says that $(F_d,\text{ev}_d)$ is a monoidal Quillen adjunction in the sense
of \myref{monQuill} below. The assumption holds by the unit axiom for the $\sV$-model category 
$\sM$ if the objects $\sD(d,e)$ are cofibrant in $\sV$. 
\end{rem}

\begin{rem}\mylabel{addend}  More generally, if $\text{Ho}\sV$ 
has a compact generating set $\sC$, then \myref{Stablethm} will hold 
as stated provided that $\et\colon F_dc\rtarr \bU\bT F_d c$ 
is an isomorphism in $\text{Ho}\mathbf{Pre} (\sD,\sV)$ for all $c\in \sC$.
\end{rem}

\begin{rem} When $\sM$ has both the given model structure and the $\sD$-model
structure as in \myref{Thm1}, where the objects of $\sD$ form a creating set in $\sM$, 
then the identity functor of $\sM$ is a Quillen equivalence from the $\sD$-model 
structure to the given model structure on $\sM$, by \myref{helpful?}. In practice, 
the creating set hypothesis never applies when working in a simplicial 
context, but it can apply when working in topological or homological 
contexts.
\end{rem}

Thus the crux of the answer to \myref{quest3too} about stable model categories is to identify 
appropriate compact generating sets in $\sM$.  The utility of the 
answer depends on understanding the associated hom objects, with their
composition, in $\sV$.

\section{Changing the categories $\sD$ and $\sM$, keeping $\sV$ fixed}\label{changeDM}

We return to the general theory and consider when we can change $\sD$, keeping $\sV$
fixed, without changing the Quillen equivalence class of $\mathbf{Pre} (\sD,\sV)$. This is 
crucial to the sequel \cite{GM2}.  We allow $\sV$
also to change in the next section.  Together with our standing assumptions on $\sV$ and $\sM$ 
from \S\ref{standass}, we assume once and for all that all categories in this section and the 
next satisfy the hypotheses of \myref{level2}. This ensures that all of our presheaf categories 
$\mathbf{Pre} (\sD,\sV)$, and  $\mathbf{Fun} (\sD^{op},\sM)$ are cofibrantly generated $\sV$-model categories.  
We will not repeat this standing assumption.

\subsection{Changing $\sD$}

In applications, especially in the sequel \cite{GM2}, we are especially interested in changing a given diagram category 
$\sD$ to a more calculable equivalent. We might also be interested in changing the 
$\sV$-category $\sM$ to a Quillen equivalent $\sV$-category $\sN$, with $\sD$ fixed, 
but the way that change works is evident from our levelwise definitions.

\begin{prop}\mylabel{fleshtoo} For a
$\sV$-functor $\xi\colon \sM\rtarr \sN$ and any small $\sV$-category $\sD$, there is an induced
$\sV$-functor $\xi_*\colon \mathbf{Fun} (\sD^{op},\sM)\rtarr \mathbf{Fun} (\sD^{op},\sN)$, and it induces
an equivalence of homotopy categories if $\xi$ does so.  A
Quillen adjunction or Quillen equivalence between $\sM$ and $\sN$
induces a Quillen adjunction or Quillen equivalence between
$\mathbf{Fun} (\sD^{op},\sM)$ and $\mathbf{Fun} (\sD^{op},\sN)$. 
\end{prop}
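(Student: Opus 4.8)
The plan is to build $\xi_*$ by post-composition and then to obtain every model-categorical assertion from the one structural fact about these functor categories that we shall use repeatedly: weak equivalences and fibrations in $\mathbf{Fun}(\sD^{op},\sM)$ and $\mathbf{Fun}(\sD^{op},\sN)$ are created levelwise, i.e.\ detected by the evaluation $\sV$-functors $\mathrm{ev}_d$. For a $\sV$-functor $X\colon\sD^{op}\rtarr\sM$ I set $\xi_*X=\xi\circ X$, and for a $\sV$-natural transformation $\alpha\colon X\rtarr Y$ I set $(\xi_*\alpha)_d=\xi(\alpha_d)$; this is evidently a $\sV$-functor, and by construction $\mathrm{ev}_d\circ\xi_*=\xi\circ\mathrm{ev}_d$ for every object $d$ of $\sD$. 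When $(\bF,\bG)$ is a $\sV$-adjunction between $\sM$ and $\sN$, with $\bF$ the left adjoint, post-composing its unit and counit into diagrams produces natural transformations $\Id\rtarr\bG_*\bF_*$ and $\bF_*\bG_*\rtarr\Id$; because the adjunction isomorphism for $(\bF,\bG)$ is $\sV$-natural, these exhibit $(\bF_*,\bG_*)$ as a $\sV$-adjunction, and by inspection its unit and counit evaluate at each $d$ to those of $(\bF,\bG)$. If moreover $(\bF,\bG)$ is a Quillen adjunction, then $\bG$ preserves fibrations and acyclic fibrations; since $\mathrm{ev}_d\circ\bG_*=\bG\circ\mathrm{ev}_d$ and the fibrations and acyclic fibrations upstairs are detected by the $\mathrm{ev}_d$, the functor $\bG_*$ preserves them too, so $(\bF_*,\bG_*)$ is a Quillen $\sV$-adjunction.

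Now suppose $(\bF,\bG)$ is a Quillen equivalence. I would verify the criterion of \cite[1.3.16]{Hovey}: (a) $\bG_*$ reflects weak equivalences between fibrant objects, and (b) for every cofibrant $X$ in $\mathbf{Fun}(\sD^{op},\sM)$, if $j\colon\bF_*X\rtarr Z$ is a fibrant approximation in $\mathbf{Fun}(\sD^{op},\sN)$ then the composite of the unit $X\rtarr\bG_*\bF_*X$ with $\bG_*j$ is a weak equivalence. For (a): a fibrant object of either functor category is levelwise fibrant, so if $f$ is a map of fibrant objects with $\bG_*f$ a weak equivalence, then each $\bG(f_d)=(\bG_*f)_d$ is a weak equivalence, and since $X_d$ and $Y_d$ are fibrant and $\bG$ reflects weak equivalences between fibrant objects (as $(\bF,\bG)$ is a Quillen equivalence), each $f_d$ is a weak equivalence; therefore $f$ is. For (b): evaluating the composite at $d$ gives $X_d\rtarr\bG\bF X_d\rtarr\bG Z_d$; since $Z$ is levelwise fibrant and $(\bF_*X)_d=\bF X_d\rtarr Z_d$ is a weak equivalence, $j_d$ is a fibrant approximation of $\bF X_d$, so this evaluated composite is a derived unit of $(\bF,\bG)$ at the object $X_d\in\sM$, and it is a weak equivalence as soon as $X_d$ is cofibrant.

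The crux, and what I expect to be the only real obstacle, is precisely this last reduction: one needs cofibrant objects of $\mathbf{Fun}(\sD^{op},\sM)$ to be levelwise cofibrant, which is not automatic for a projective model structure. Here I would invoke the standing hypotheses (\myref{level2}), under which the hom objects $\sD(d,e)$ are cofibrant in $\sV$: then by the pushout--product axiom for the $\sV$-model category $\sM$, the evaluation $\mathrm{ev}_e$ carries each generating cofibration $F_d i$ of $\mathbf{Fun}(\sD^{op},\sM)$ to the cofibration $i\odot\mathrm{id}_{\sD(e,d)}$ of $\sM$; since $\mathrm{ev}_e$ is both a left and a right $\sV$-adjoint (\myref{Fb}, \myref{Gb}), it preserves the initial object and all colimits, hence preserves relative cell complexes and retracts, hence all cofibrations, so $X$ cofibrant forces $X_d$ cofibrant. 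This finishes (b) and shows $(\bF_*,\bG_*)$ is a Quillen equivalence. Finally, the opening sentence of the proposition is the instance of the preceding in which $\xi$ is one member of a Quillen equivalence; and even without an adjoint, the identity $\mathrm{ev}_d\circ\xi_*=\xi\circ\mathrm{ev}_d$ shows at once that $\xi_*$ preserves and reflects levelwise weak equivalences whenever $\xi$ preserves and reflects weak equivalences, from which the equivalence of homotopy categories is inherited.
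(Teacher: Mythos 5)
The paper in fact records this proposition without proof (``the way that change works is evident from our levelwise definitions''), so the only question is whether your argument is sound under the hypotheses actually in force. The Quillen adjunction part and your reduction of the equivalence part to the claim that cofibrant objects of $\mathbf{Fun}(\sD^{op},\sM)$ are level cofibrant are fine, and you correctly identified that claim as the crux. The gap is in how you discharge it: you write that the ``standing hypotheses (\myref{level2})'' include that the hom objects $\sD(d,e)$ are cofibrant in $\sV$. They do not. The standing assumption for \S\ref{changeDM}--\ref{changeDMV} is only that $F\mathcal{I}_{\sM}$ and $F\mathcal{J}_{\sM}$ admit the small object argument and that $F\mathcal{J}_{\sM}$ satisfies the acyclicity condition; the statement that cofibrations are level cofibrations occurs in \myref{level2} only under the further hypothesis that each $\sD(e,d)\odot(-)$ preserves cofibrations, and the paper insists elsewhere (\myref{monoid}, \myref{complem}, the discussions of non-cofibrant $\mathbf{I}$) on allowing non-cofibrant hom objects. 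Without that extra hypothesis your step (b) is unproven: for a cofibrant diagram $X$ that is not level cofibrant, the evaluated composite $X_d\rtarr \bG\bF X_d\rtarr \bG Z_d$ is not a derived unit of $(\bF,\bG)$ at a cofibrant object of $\sM$, and the Quillen equivalence hypothesis says nothing about it. This is not a cosmetic issue: take $\sV=\mathrm{Ch}(\mathbb{Z})$, $\sD$ with one object and $\sD(d,d)=\mathbb{Z}/p$, $\sM=\mathrm{Ch}(\mathbb{Z})$, $\sN$ the DG modules over a DGA $A$ equipped with a quasi-isomorphism $\mathbb{Z}\rtarr A$ for which $A$ is not flat and $H_*(A\otimes_{\mathbb{Z}}\mathbb{Z}/p)\neq \mathbb{Z}/p$ (a square-zero extension of $\mathbb{Z}$ by a suitable acyclic complex with torsion does this), and $(\bF,\bG)$ extension and restriction of scalars. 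This is a Quillen $\sV$-equivalence, the level model structures exist, $F_d\mathbb{Z}=\mathbb{Z}/p$ is a cofibrant presheaf by \myref{RepCof}, all objects on the $\sN$-side are fibrant, and the unit of the induced adjunction at $F_d\mathbb{Z}$ is $\mathbb{Z}/p\rtarr A\otimes_{\mathbb{Z}}\mathbb{Z}/p$, which is not a quasi-isomorphism. So the level-cofibrancy input (or some substitute, such as $\bF$ preserving all weak equivalences) must be stated as an explicit hypothesis rather than attributed to the paper's standing assumptions.

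A second, smaller gap concerns the first sentence of the proposition. Your closing claim that $\xi_*$ ``preserves and reflects levelwise weak equivalences whenever $\xi$ preserves and reflects weak equivalences, from which the equivalence of homotopy categories is inherited'' is a non sequitur: a functor that preserves and reflects weak equivalences induces a faithfully conservative functor on homotopy categories, but nothing in that argument gives essential surjectivity or fullness of $\mathrm{Ho}(\xi_*)$ (lifting an $\sN$-valued diagram to an $\sM$-valued one up to levelwise weak equivalence is a rectification problem, not a levelwise one). If you read the hypothesis on $\xi$ as ``$\xi$ is one member of a Quillen equivalence,'' this clause reduces to the second statement and needs no separate argument; if $\xi$ is a general $\sV$-functor inducing an equivalence of homotopy categories, your one-line justification does not establish the claim.
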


We have several easy observations about changing $\sD$, with $\sM$ fixed. 
Before returning to model categories, we record a categorical observation.  In the
rest of this section, $\sM$ is any $\sV$-category, but our main interest is 
in the case $\sM=\sV$.

\begin{lem}\mylabel{fleshcat} Let $\nu\colon \sD\rtarr \sE$  be a $\sV$-functor
and $\sM$ be a $\sV$-category. Then there is a $\sV$-adjunction $(\nu_!,\nu^*)$ 
between $\mathbf{Fun} (\sD^{op},\sM)$ and $\mathbf{Fun} (\sE^{op},\sM)$. 
\end{lem}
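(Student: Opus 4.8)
The plan is to exhibit $(\nu_!,\nu^*)$ as the enriched restriction–left-Kan-extension adjunction along $\nu^{op}\colon\sD^{op}\rtarr\sE^{op}$. First I would define $\nu^*\colon\mathbf{Fun} (\sE^{op},\sM)\rtarr\mathbf{Fun} (\sD^{op},\sM)$ by precomposition, $\nu^* Y = Y\circ\nu^{op}$, so that $(\nu^* Y)_d = Y_{\nu d}$; since the enriched hom-objects of these functor categories are the ends $\ul{\mathbf{Fun} }(\sE^{op},\sM)(Y,Y') = \int_{e\in\sE}\ul{\sM}(Y_e,Y'_e)$ and likewise over $\sD$ (see \S\ref{CatPre}), restricting the universal wedge along $\nu$ makes $\nu^*$ a $\sV$-functor with no work. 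Then I would define $\nu_!$ on objects by the coend
\[ (\nu_! X)_e = \int^{d\in\sD}\sE(e,\nu d)\odot X_d , \]
with the $\sE^{op}$-functoriality in $e$ coming from composition in $\sE$. This coend exists because $\sM$ is bicomplete, and it is an instance of the tensor product of functors of (\ref{FWXO}) — it is the extension of $X$ along $\nu$, just as $\bT X = X\odot_\sD\de$ in the proof of \myref{bT} is the extension along $\de$ — and one checks as there that $\nu_!$ is a $\sV$-functor on functor categories.

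Next I would establish the adjunction isomorphism in $\sV$,
\[ \ul{\mathbf{Fun} }(\sE^{op},\sM)(\nu_! X,Y) \iso \ul{\mathbf{Fun} }(\sD^{op},\sM)(X,\nu^* Y), \]
$\sV$-naturally in $X$ and $Y$, by the usual coend calculus. Expanding the left side as an end over $\sE$ and pulling the defining coend of $(\nu_! X)_e$ out of the first variable (where it becomes an end), then applying the tensor–hom adjunction (\ref{biten}) in $\sM$, gives
\[ \int_{e\in\sE}\int_{d\in\sD}\ul{\sV}\bigl(\sE(e,\nu d),\,\ul{\sM}(X_d,Y_e)\bigr). \]
Interchanging the two ends (Fubini) and then carrying out the end over $e\in\sE$ — where $\sE(-,\nu d)$ is the presheaf represented by $\nu d$ and $e\mapsto\ul{\sM}(X_d,Y_e)$ is a presheaf in $\sV$ — the enriched Yoneda lemma collapses it to $\ul{\sM}(X_d,Y_{\nu d}) = \ul{\sM}(X_d,(\nu^* Y)_d)$, and the surviving end over $d\in\sD$ is exactly $\ul{\mathbf{Fun} }(\sD^{op},\sM)(X,\nu^* Y)$. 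This is the same pattern — an adjunction plus two uses of an internal-hom adjunction plus Yoneda — that appears in the proof of \myref{cute}. Applying $\sV(\mathbf{I},-)$ recovers the underlying ordinary adjunction, so $(\nu_!,\nu^*)$ is a genuine $\sV$-adjunction; alternatively, $\nu_!$ is automatically a $\sV$-left-adjoint to the $\sV$-functor $\nu^*$ between tensored $\sV$-categories \cite{Kelly}.

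I expect the only real work to be bookkeeping: keeping the variances in $\sD^{op}$, $\sE^{op}$ and in $X$, $Y$ straight so that the wedge and dinaturality conditions for all the coends and ends line up, and checking that each displayed isomorphism is $\sV$-natural (equivalently $\sV$-dinatural) rather than merely a bijection of underlying sets. There is no conceptual obstacle — this is the standard enriched Kan extension adjunction \cite[Ch. 4]{Kelly}, \cite{BorII} — and when $\sM=\sV$ it specializes to the adjunction $(\de_!,\de^*)$ already invoked in \myref{Ddealt}.
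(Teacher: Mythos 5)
Your proposal is correct and follows essentially the same route as the paper: $\nu^*$ is restriction along $\nu$ and $\nu_!$ is the enriched left Kan extension, whose value $(\nu_!X)_e$ is exactly the tensor product of functors $X\otimes_{\sD}\nu_e$ of (\ref{FWXO}), written by you as a coend. The only difference is that you spell out the end/coend and Yoneda verification of the adjunction isomorphism, which the paper leaves to the cited references.
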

\begin{proof}
 The $\sV$-functor $\nu^*$ restricts a presheaf $Y$ on $\sE$ 
to the presheaf $Y\com \nu$ on $\sD$. Its left adjoint $\nu_!$ sends a presheaf 
$X$ on $\sD$ to its left Kan extension, or prolongation, along $\nu$ 
(e.g. \cite[23.1]{MMSS}).  Explicitly, $(\nu_!X)_e = X\otimes_{\sD} \nu_e$, 
where $\nu_e\colon \sD\rtarr \sV$ is given on objects by 
$\nu_e(d) = {\sE}(e,\nu d)$ and on hom objects by the adjoints of the composites
\[ \xymatrix@1{
{\sD}(d,d')\otimes {\sE}(e,\nu d) \ar[r]^-{\nu\otimes\id} &
{\sE}(\nu d, \nu d')\otimes {\sE}(e,\nu d) \ar[r]^-{\com} &
{\sE}(e,\nu d').\\} \]
The tensor product of functors is recalled in (\ref{FWXO}).
\end{proof}

\begin{defn}\mylabel{Nuisance} Let $\nu\colon \sD\rtarr \sE$ be a $\sV$-functor
and let $\sM$ be a $\sV$-model category. 
\begin{enumerate}[(i)]
\item $\nu$ is weakly full and faithful if
each $\nu\colon \sD(d,d')\rtarr \sE(\nu d,\nu d')$ is a weak equivalence in $\sV$.
\item $\nu$ is essentially surjective if each object $e\in \sE$ is isomorphic 
(in the underlying category of $\sE$) to an object $\nu d$ for some $d\in\sD$.
\item $\nu$ is a weak equivalence if it is weakly full and faithful and 
essentially surjective.
\item $\nu$ is an $\sM$-weak equivalence if 
\[ \nu\odot \id\colon {\sD}(d,d')\odot M\rtarr {\sE}(\nu d,\nu d')\odot M\]
is a weak equivalence in $\sM$ for all cofibrant $M$ and 
$\nu$ is essentially surjective.
\end{enumerate}
\end{defn}

\begin{prop}\mylabel{flesh} Let $\nu\colon \sD\rtarr \sE$ be a $\sV$-functor
and let $\sM$ be a $\sV$-model category. Then $(\nu_!,\nu^*)$ is a Quillen adjunction, 
and it is a Quillen equivalence if $\nu$ is an $\sM$-weak equivalence.  
\end{prop}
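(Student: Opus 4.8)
\emph{Proof strategy.} The idea is to dispatch the Quillen adjunction formally, to upgrade $\nu^*$ to a functor that \emph{creates} weak equivalences using essential surjectivity, and then to reduce the Quillen equivalence to the statement that the unit $\et\colon X\rtarr\nu^*\nu_!X$ is a weak equivalence on cofibrant $X$, which is precisely what the hypothesis that $\nu$ is an $\sM$-weak equivalence is designed to produce.

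First, weak equivalences and fibrations in $\mathbf{Fun}(\sE^{op},\sM)$ are detected levelwise and $(\nu^*Y)_d=Y_{\nu d}$, so $\nu^*$ carries levelwise fibrations and levelwise acyclic fibrations to maps of the same kind; hence $(\nu_!,\nu^*)$ is a Quillen adjunction, using nothing beyond the existence of the two level model structures. Moreover any $\sV$-functor $\sE^{op}\rtarr\sM$ carries isomorphisms of the underlying category of $\sE$ to isomorphisms of the underlying category of $\sM$, so if $e$ is isomorphic to $\nu d$ in $\sE$ then $Y_e\iso Y_{\nu d}$ naturally in $Y$. Since $\nu$ is essentially surjective, a map $f$ in $\mathbf{Fun}(\sE^{op},\sM)$ is therefore a weak equivalence if and only if each $(\nu^*f)_d=f_{\nu d}$ is, i.e.\ if and only if $\nu^*f$ is a weak equivalence; thus $\nu^*$ both preserves and reflects weak equivalences. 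In particular $\nu^*$ reflects weak equivalences between fibrant objects, so by \cite[1.3.16]{Hovey} the pair $(\nu_!,\nu^*)$ is a Quillen equivalence as soon as its derived unit is a weak equivalence on cofibrant objects. Since $\nu^*$ preserves weak equivalences, the comparison $\nu^*\nu_!X\rtarr\nu^*R\nu_!X$ to a fibrant replacement functor $R$ is already a weak equivalence, so it suffices to prove that $\et\colon X\rtarr\nu^*\nu_!X$ is a weak equivalence whenever $X$ is cofibrant in $\mathbf{Fun}(\sD^{op},\sM)$.

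To analyze $\et$, I would combine the co-Yoneda isomorphism $X_d\iso X\otimes_{\sD}\sD(d,-)$ with the formula $(\nu_!X)_e=X\otimes_{\sD}\nu_e$ of \myref{fleshcat}: evaluating $\nu^*\nu_!X$ at $d$ gives $X\otimes_{\sD}\sE(\nu d,\nu(-))$, and inspection identifies $\et_d$ with $X\otimes_{\sD}(-)$ applied to the natural transformation $\sD(d,-)\rtarr\sE(\nu d,\nu(-))$ induced by $\nu$. Both functors $X\mapsto X\otimes_{\sD}\sD(d,-)$ and $X\mapsto X\otimes_{\sD}\sE(\nu d,\nu(-))$ preserve colimits, and weak equivalences are closed under retracts, so one may reduce to the case that $X$ is a cell complex built from the generating cofibrations $F_{d_0}j$, with $j$ a generating cofibration of $\sM$. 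For a free diagram $X=F_{d_0}M$ the identity $(F_{d_0}M)\otimes_{\sD}Z\iso M\odot Z(d_0)$ identifies $\et_d$ with $\nu\odot\id\colon\sD(d,d_0)\odot M\rtarr\sE(\nu d,\nu d_0)\odot M$, which is a weak equivalence for cofibrant $M$ precisely because $\nu$ is an $\sM$-weak equivalence; this is the base case. The passage from cells to cell complexes is then the routine bookkeeping of \myref{howto}: applying $(-)\otimes_{\sD}\sD(d,-)$ and $(-)\otimes_{\sD}\sE(\nu d,\nu(-))$ to the defining pushouts and transfinite composites yields pushouts and transfinite composites in $\sM$, and the standing hypotheses of \myref{level2} supply exactly the cofibrancy of the tensored cells $F_{d_0}j\otimes_{\sD}Z$ needed for the gluing lemma and the colimit comparisons to carry the base case through each stage. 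I expect this propagation step — keeping $F_{d_0}j\otimes_{\sD}Z$ a cofibration in $\sM$ so that the gluing lemma applies at each cell attachment — to be the only real obstacle; the identification of $\et$ and the base case are pure adjunction and co-Yoneda.
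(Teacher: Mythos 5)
Your proposal is correct and follows essentially the same route as the paper: levelwise definitions give the Quillen adjunction, essential surjectivity makes $\nu^*$ create (hence reflect) weak equivalences, the unit on free objects $F_{d_0}M$ is identified (the paper does this via $\nu_!F_d\iso F_{\nu d}$ by uniqueness of adjoints, you via co-Yoneda, which is the same computation) with $\nu\odot\id$, and the cell-complex induction plus \cite[1.3.16]{Hovey} finishes the Quillen equivalence. Your propagation step is stated at the same (deliberately informal) level of rigor as the paper's own appeal to ``the relevant colimits preserve weak equivalences,'' so nothing essential is missing.
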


\begin{proof}  We have a Quillen adjunction since $\nu^*$ preserves (level) fibrations and weak equivalences. By 
 \cite[1.3.16]{Hovey} or \cite[16.2.3]{MP}, to show that $(\nu_!,\nu^*)$ is a Quillen equivalence, it suffices to show
 that $\nu^*$ creates the weak equivalences in $\mathbf{Fun} (\sD^{op},\sM)$ and that $\et\colon X \rtarr \nu^*\nu_!X$ 
 is a weak equivalence when $X$ is cofibrant.  When $\nu$ is essentially surjective, easy diagram chases show 
that $\nu^*$ creates the fibrations and weak equivalences of $\mathbf{Fun} (\sD^{op},\sM)$. Comparing composites
of left adjoints, $\nu_! F_d$ is the left adjoint $F_{\nu d}$ of $ev_d\com \nu^*$, and
$\et\colon X\rtarr \nu^*\nu_! X$ is given on objects $X=F_dM$ by maps of the form that we require to be weak 
equivalences when $\nu$ is an $\sM$-weak equivalence.  The functor $\nu^*$ 
preserves colimits, since these are defined levelwise, and the relevant colimits 
(those used to construct cell objects) preserve weak equivalences.  Therefore $\et$ 
is a weak equivalence when $X$ is cofibrant.
\end{proof}

\begin{rem}\mylabel{nunu} Let $\sD \subset \sE$ be sets of bifibrant 
objects in $\sM$ and let $\nu\colon \sD\rtarr \sE$ be
the corresponding inclusion of full $\sV$-subcategories of $\sM$.
If $\sD$ is a reflecting or creating set of objects in the
sense of \myref{refcre} or if $\sD$ is a generating set in the sense 
of \myref{compactgen}, then so is $\sE$. Therefore, if \myref{Quill2} or 
\myref{Stablethm} applies to prove that $\bU\colon \sM\rtarr \mathbf{Pre} (\sD,\sV)$ is a right
Quillen equivalence, then it also applies to prove that 
$\bU\colon \sM\rtarr \mathbf{Pre} (\sE,\sV)$ is a right Quillen equivalence.  Since
$\nu^*\bU = \bU$, this implies that $\nu^*\colon \mathbf{Pre} (\sE,\sV)\rtarr \mathbf{Pre} (\sD,\sV)$
is a Quillen equivalence, even though the ``essentially
surjective'' hypothesis in \myref{flesh} generally fails in this situation.
\end{rem}

\subsection{Quasi-equivalences and changes of $\sD$}\label{QEchangesec}

Here we describe a Morita type criterion for when two $\sV$-categories $\sD$ and $\sE$ are 
connected by a zigzag of weak equivalences. This generalizes work along the same lines of Keller 
\cite{Keller}, Schwede and Shipley \cite{SS}, and Dugger \cite{Dugger}, which deal with  particular 
enriching categories, and we make no claim to originality. It can be used in tandem with 
\myref{flesh} to obtain zigzags of weak equivalences between categories of presheaves.

Recall (cf. \S\ref{EnHyp}) that we have the $\sV$-product $\sD^{\text{op}}\otimes \sE$ between the 
$\sV$-categories $\sD^{\text{op}}$ and $\sE$. 
The objects of $\mathbf{Pre} (\sD^{op}\otimes \sE,\sV)$ are often called ``distributors'' 
in the categorical literature, but we follow \cite{SS} and call them
$(\sD,\sE)$-bimodules. Thus a $(\sD,\sE)$-bimodule $\sF$ is a contravariant
$\sV$-functor $\sD^{\text{op}}\otimes \sE\rtarr \sV$.  It is convenient to write
the action of $\sD$ on the left (since it is covariant) and the action of $\sE$ 
on the right. We write ${\sF}(d,e)$ for the object in $\sV$ that $\sF$ assigns to the
object $(d,e)$. The definition encodes three associativity diagrams
\[\xymatrix{ {\sD}(e,f)\otimes {\sD}(d,e)\otimes {\sF}(c,d) \ar[r] \ar[d] 
&  {\sD}(d,f)\otimes {\sF}(c,d) \ar[d]\\
{\sD}(e,f) \otimes {\sF}(c,e) \ar[r] & {\sF}(c,f) \\} \]
\[\xymatrix{ \sD(e,f)\otimes {\sF}(d,e)\otimes {\sE}(c,d) \ar[r] \ar[d] 
&  {\sF}(d,f)\otimes \sE(c,d) \ar[d]\\
{\sD}(e,f) \otimes {\sF}(c,e) \ar[r] & {\sF}(c,f) \\} \]
\[ \xymatrix{ {\sF}(e,f)\otimes \sE(d,e)\otimes {\sE}(c,d) \ar[r] \ar[d] 
&  {\sF}(d,f)\otimes {\sE}(c,d) \ar[d]\\
{\sF}(e,f) \otimes {\sE}(c,e) \ar[r] & {\sF}(c,f)\\} \]
and two unit diagrams
\[ \xymatrix{ \mathbf{I} \otimes {\sF}(c,d) \ar[r] \ar[dr]_{\iso} 
& {\sD}(d,d)\otimes {\sF}(c,d) \ar[d]\\
& {\sF}(c,d)\\} \quad
 \xymatrix{ {\sF}(d,e)\otimes \mathbf{I} \ar[r] \ar[dr]_{\iso}
& {\sF}(d,e)\otimes {\sE}(d,d) \ar[d]  \\
& {\sF}(d,e). \\} \]

The following definition and proposition are adapted from work of 
Schwede and Shipley \cite{SS}; see also \cite{Dugger}. They encode 
and exploit two further unit conditions. 

\begin{defn}\mylabel{QEquiv} Let $\sD$ and $\sE$ have the same sets of objects, denoted $\bO$. 
Define a quasi-equivalence between $\sD$ and $\sE$ to be a $(\sD,\sE)$-bimodule $\sF$
together with a map $\ze_d\colon \bf{I}\rtarr \sF$($d$,$d$) for each $d\in \bO$ such that 
for all pairs $(d,e)\in \bO$, the maps
\begin{equation}\label{QWeak} 
(\ze_d)^*\colon \sD(d,e) \rtarr \sF(d,e) \ \ \ \text{and}\ \ \ 
(\ze_e)_*\colon \sE(d,e) \rtarr \sF(d,e) 
\end{equation}
in $\sV$ given by composition with $\ze_d$ and $\ze_e$ are weak equivalences. Given $\sF$ and 
the maps $\ze_d$, define a new $\sV$-category $\sG(\sF,\ze)$
with object set $\bO$ by letting ${\sG}(\sF,\ze)(d,e)$ be the pullback in $\sV$ displayed in the diagram 
\begin{equation}\label{quasipb} \xymatrix{
{\sG}(\sF,\ze)(d,e)  \ar[r] \ar[d] & {\sE}(d,e) \ar[d]^{(\ze_e)_*} \\
{\sD}(d,e) \ar[r]_{(\ze_d)^*} & {\sF}(d,e) \\}
\end{equation}
Its units and composition are induced from those of $\sD$ and $\sE$ and the bimodule
structure on $\sF$ by use of the universal
property of pullbacks. The unlabelled arrows specify $\sV$-functors
\begin{equation}\label{quasipb2} 
\sG(\sF,\ze) \rtarr \sD \ \ \ \text{and} \ \ \ \sG(\sF,\ze)\rtarr \sE.
\end{equation}
\end{defn} 

\begin{prop}\mylabel{QuasiEquiv} Assume that the unit $\mathbf{I}$ is cofibrant
in $\sV$. If $\sD$ and $\sE$ are quasi-equivalent, then there is a chain of weak 
equivalences connecting $\sD$ and $\sE$.
\end{prop}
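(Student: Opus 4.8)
The plan is to use the auxiliary $\sV$-category $\sG = \sG(\sF,\ze)$ constructed in \myref{QEquiv} as the middle term of the desired zigzag, so that the chain takes the shape
\[ \sD \longleftarrow \sG(\sF,\ze) \longrightarrow \sE. \]
Both legs are the $\sV$-functors (\ref{quasipb2}), which are the identity on the object set $\bO$; so to invoke \myref{Nuisance}(iii) it suffices to check that each is weakly full and faithful, i.e.\ that on each hom object the maps $\sG(\sF,\ze)(d,e)\rtarr \sD(d,e)$ and $\sG(\sF,\ze)(d,e)\rtarr \sE(d,e)$ are weak equivalences in $\sV$. (Essential surjectivity is automatic since the object sets agree and the functors are the identity on objects.) That will give a zigzag of weak equivalences between the $\sV$-\emph{categories} $\sD$, $\sG(\sF,\ze)$, $\sE$, which is exactly the assertion.

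The heart of the argument is therefore the pullback square (\ref{quasipb}). By hypothesis the bottom map $(\ze_d)^*\colon \sD(d,e)\rtarr \sF(d,e)$ and the right-hand map $(\ze_e)_*\colon \sE(d,e)\rtarr \sF(d,e)$ are both weak equivalences in $\sV$. I would first arrange that at least one of the two maps into $\sF(d,e)$ is a \emph{fibration}: replace the cospan $\sD(d,e)\rtarr \sF(d,e)\lla \sE(d,e)$ by a weakly equivalent one in which, say, $(\ze_e)_*$ is a fibration, using the functorial factorizations in the model category $\sV$; one must check that this can be done compatibly with the $\sV$-category structure, or else argue directly that the pullback already computes the homotopy pullback. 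Since $\sV$ is a proper (in particular right proper) model category and the pullback is along a fibration, the homotopy pullback of an acyclic fibration along any map is again an acyclic fibration, so the projection $\sG(\sF,\ze)(d,e)\rtarr \sD(d,e)$ is a weak equivalence. Because one of $(\ze_d)^*, (\ze_e)_*$ is a weak equivalence and the square is a homotopy pullback, the other projection $\sG(\sF,\ze)(d,e)\rtarr \sE(d,e)$ is a weak equivalence by the two-out-of-three property for homotopy pullbacks. This is the one place where properness of $\sV$ is genuinely used, and it is why the hypothesis that $\mathbf{I}$ is cofibrant is comforting though the real input is right properness; I would remark on this.

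The remaining, more bookkeeping-heavy, step is to verify that the pullbacks $\sG(\sF,\ze)(d,e)$ do assemble into a genuine $\sV$-category with unit $\mathbf{I}\rtarr \sG(\sF,\ze)(d,d)$ and an associative, unital composition, and that the two projections (\ref{quasipb2}) are honest $\sV$-functors. This is forced by the universal property of pullbacks together with the three associativity diagrams and two unit diagrams for the bimodule $\sF$ recorded before \myref{QEquiv} and the two extra unit conditions built into the maps $\ze_d$; I would present this as a diagram chase and not grind through it. The main obstacle is the interface between the \emph{strictness} of the $\sV$-category axioms and the \emph{homotopical} replacement needed to make the pullback a homotopy pullback: one wants the fibrant replacement of the cospan to be performed once and for all in a way that respects composition in $\sF$. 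If that cannot be arranged on the nose, the cleaner route is to avoid replacement entirely by checking that (\ref{quasipb}) is already a homotopy pullback square in $\sV$ — for instance because in the cases of interest $\sV$ is right proper and $(\ze_e)_*$ (or a model of it) is a fibration — and then the two weak-equivalence conclusions follow as above. Either way, once both legs of $\sD \lla \sG(\sF,\ze) \rtarr \sE$ are seen to be weak equivalences of $\sV$-categories, the proof is complete.
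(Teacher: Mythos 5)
There is a genuine gap, and it sits exactly where you flag it and then defer it. The square (\ref{quasipb}) is a \emph{strict} pullback, and in the definition of a quasi-equivalence the maps $(\ze_d)^*$ and $(\ze_e)_*$ are only required to be weak equivalences, not fibrations; so the pullback need not be a homotopy pullback, and there is no reason for the projections $\sG(\sF,\ze)(d,e)\rtarr \sD(d,e)$ and $\sG(\sF,\ze)(d,e)\rtarr \sE(d,e)$ to be weak equivalences. (In the good case the relevant fact is simply that a pullback of an acyclic fibration is an acyclic fibration, a lifting-property argument; right properness is not what is used, and indeed the paper never uses properness here.) Your proposed repair --- replace the cospan $\sD(d,e)\rtarr\sF(d,e)\ltarr\sE(d,e)$ so that one leg becomes a fibration ``compatibly with the $\sV$-category structure'' --- is precisely the nontrivial point: the factorizations must be coherent with the compositions of $\sD$ and $\sE$ and with the bimodule actions simultaneously in all variables, and a levelwise functorial factorization in $\sV$ does not do this. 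Your fallback (``in the cases of interest the square is already a homotopy pullback'') proves a special case, not the proposition.

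The paper's proof keeps your special case (if all $(\ze_d)^*$ or all $(\ze_e)_*$ are acyclic fibrations, then all four arrows in (\ref{quasipb}) are weak equivalences and (\ref{quasipb2}) is the desired zigzag) but reduces the general case to \emph{two} applications of it, at the price of a longer chain. One fibrantly replaces $\sF$ in $\mathbf{Pre} (\sD^{op}\otimes\sE,\sV)$, regards $\bF(e)=\sF(-,e)$ as a fibrant presheaf in $\mathbf{Pre} (\sE,\sV)$, and factors the level weak equivalence $\bY(e)\rtarr\bF(e)$ as an acyclic cofibration $\io(e)\colon\bY(e)\rtarr X(e)$ followed by a (necessarily acyclic) fibration $\rh(e)\colon X(e)\rtarr\bF(e)$. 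Taking $\sE\!nd(X)$ to be the full subcategory of $\mathbf{Pre} (\sE,\sV)$ on the bifibrant $X(e)$, the presheaf homs $\ul{\sY}(d,e)=\ul{\mathbf{Pre} }(\sE,\sV)(\bY(d),X(e))$ and $\ul{\sZ}(d,e)=\ul{\mathbf{Pre} }(\sE,\sV)(X(d),\bF(e))$ make $(\sY,\io)$ and $(\sZ,\rh)$ quasi-equivalences in which one structure map is an acyclic fibration by the enriched SM7 axiom: $(\io_d)^*$ because $\io_d$ is an acyclic cofibration and $X(e)$ is fibrant, and $(\rh_e)_*$ because $\rh_e$ is an acyclic fibration and $X(d)$ is cofibrant. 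This produces the zigzag $\sE\ltarr\sG(\sY,\io)\rtarr\sE\!nd(X)\ltarr\sG(\sZ,\rh)\rtarr\sD$, with no need to modify the given $\sF$ within its bimodule structure beyond fibrant replacement. Note finally that the cofibrancy of $\mathbf{I}$ is not merely ``comforting'': it is what makes $\bY(d)$, and hence $X(d)$, cofibrant (\myref{RepCof}), which is exactly what is needed for $(\rh_e)_*$ to be an acyclic fibration.
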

\begin{proof} Choose a quasi-equivalence $(\sF,\ze)$. If either all $(\ze_d)^*$
or all $(\ze_e)_*$ are acyclic fibrations, then all four arrows in (\ref{quasipb})
are weak equivalences and (\ref{quasipb2}) displays a zigzag of weak equivalences
between $\sD$ and $\sE$. We shall reduce the general case to two applications of
this special case. Observe that by taking a fibrant replacement in the category 
$\mathbf{Pre} (\sD^{op}\otimes \sE,\sV)$, we may assume without loss of generality that 
our given $(\sD,\sE)$-bimodule $\sF$ is fibrant, so that each ${\sF}(d,e)$ is 
fibrant in $\sV$. 

For fixed $e$, the adjoint of the right action of $\sE$ on $\sF$ gives maps
\[ {\sE}(d,d') \rtarr \ul{\sV}({\sF}(d',e),{\sF}(d,e)) \]
that allow us to view the functor $\bF(e)_d = {\sF}(d,e)$ as an 
object of $\mathbf{Pre} (\sE,\sV)$; it is fibrant since each ${\sF}(d,e)$ is fibrant in $\sV$. 
Fixing $e$ and letting $d$ vary, the maps $(\ze_e)_*$ 
of (\ref{QWeak}) specify a map $\bY(e) \rtarr \bF(e)$ in $\sV^{\sE}$.
By hypothesis, this map is a level weak equivalence, and it is thus a weak equivalence
in $\mathbf{Pre} (\sE,\sV)$. Factor it as the composite of an acyclic cofibration
$\io(e)\colon \bY(e)\rtarr X(e)$ and a fibration $\rh(e)\colon X(e)\rtarr \bF(e)$.
Then $\rh(e)$ is acyclic by the two out of three property. By \myref{RepCof}, our 
assumption that $\mathbf{I}$ is cofibrant implies that $\bY(e)$ and therefore
$X(e)$ is cofibrant in $\sV^{\sE}$, and $X(e)$ is fibrant since $\bF(e)$ is fibrant. 
Let $\sE\!nd(X)$ denote the full subcategory of $\mathbf{Pre} (\sE,\sV)$ whose objects are the 
bifibrant presheaves $X(e)$.

Now use (\ref{PXY}) to define 
\[ \ul{\sY}(d,e) = \ul{\mathbf{Pre} }(\sE,\sV)(\bY(d),X(e)) \iso X(e)_d, \]
where the isomorphism is given by the enriched Yoneda lemma, and
\[ \ul{\sZ}(d,e) = \ul{\mathbf{Pre} }(\sE,\sV)(X(d),\bF(e)). \]
Composition in
$\mathbf{Pre} (\sE,\sV)$ gives a left action of $\sE\!nd(X)$ on $\sY$ and a right
action of $\sE\!nd(X)$ on $\sZ$. Evaluation 
$$\ul{\mathbf{Pre} }(\sE,\sV)(\bY(d),X(e))\otimes \bY(d) \rtarr X(e)$$ 
gives a right
action of $\sE$ on $\sY$. The action of $\sD$ on $\sF$ gives maps
$${\sD}(e,f)\rtarr \ul{\mathbf{Pre} }(\sE,\sV)(\bF(e),\bF(f)),$$ 
and these together with composition in $\mathbf{Pre} (\sE,\sV)$ give a left action of $\sD$ on $\sZ$. 
These actions make $\sY$ an $(\sE\!nd(X),\sE)$-bimodule and $\sZ$ a $(\sD,\sE\!nd(X))$-bimodule.
We may view the weak equivalences $\io(e)$ as maps 
$\io_e\colon \mathbf{I}\rtarr \ul{\sY}(e,e)$ and the weak equivalences 
$\rh(e)$ as maps $\rh_e\colon \mathbf{I}\rtarr \ul{\sZ}(e,e)$.  We claim that
$(\sY,\io)$ and $(\sZ,\rh)$ are quasi-equivalences to which the acyclic
fibration special case applies, giving a zigzag of weak equivalences
\begin{equation}\label{5zigzag}
 \xymatrix@1{
\sE & \sG(\sY,\io) \ar[l] \ar[r] & \sE\!nd(X) & \sG(\sZ,\rh) \ar[l] \ar[r] & \sD.\\}
\end{equation}
The maps
$$(\io)_*\colon \bY(e)_d ={\sE}(d,e) \rtarr \sY(d,e) = \ul{\sV}^{\sE}(\bY(d),X(e))\iso X(e)_d$$ 
are the weak equivalences $\io\colon \bY(e)_d\rtarr X(e)_d$. The maps 
\[ (\io_d)^*\colon \ul{\sV}^{\sE}(X(d),X(e)) \rtarr \ul{\sV}^{\sE}(\bY(d), X(e)) \]
are acyclic fibrations since $\io_d$ is an acyclic cofibration and $X(e)$ is fibrant.
This gives the first two weak equivalences in the zigzag (\ref{5zigzag}).  The maps  
$$(\rh_d)^*\colon \bY(e)_d={\sD}(d,e) \rtarr \sZ(d,e) = \ul{\sV}^{\sE}(X(d),\bF(e)) $$
are weak equivalences since their composites with the maps
$$ (\io_d)^*\colon \ul{\sV}^{\sE}(X(d),\bF(e)) 
\rtarr  \ul{\sV}^{\sE}(\bY(d),\bF(e))\iso \bF(e)_d$$
are the original weak equivalences $(\ze_d)^*$.
The maps 
\[ (\rh_e)_*\colon \ul{\sV}^{\sE}(X(d),X(e)) \rtarr \ul{\sV}^{\sE}(X(d),\bF(e)) \]
are acyclic fibrations since $\rh_e$ is an acyclic fibration and $X(d)$ is cofibrant. 
This gives the second two weak equivalences in the zigzag (\ref{5zigzag}).
\end{proof}

\begin{rem}\mylabel{slippery} The assumption that $\mathbf{I}$ is cofibrant is only used to ensure
that the represented presheaves $\bY(e)$ are cofibrant. If we know that in some
other way, we do not need the assumption.  Since 
the hypotheses and conclusion only involve the weak equivalences in $\sV$, not
the rest of its model structure, we can replace the model structure on $\sV$ by the 
Quillen equivalent model structure $\tilde{\sV}$ of \myref{Muro} below, in which $\mathbf{I}$ is 
cofibrant, to eliminate the assumption.  As discussed in \S\ref{How?}, this entails checking that
all presheaf categories in sight still have the level model structure of \myref{level2}, as in our
standing assumption.
\end{rem}

\subsection{Changing full subcategories of Quillen equivalent model categories}

We show here how to obtain quasi-equivalences between full subcategories
of Quillen equivalent $\sV$-model categories $\sM$ and $\sN$.   \myref{Cutesy}
implies the following invariance statement.

\begin{lem}\mylabel{CutesyToo}  Let $(\bT,\bU)$ be a Quillen $\sV$-equivalence 
between $\sV$-model categories $\sM$ and $\sN$. Let $\{M_d\}$ be 
a set of bifibrant objects of $\sM$ and $\{N_d\}$ be a set of bifibrant objects 
of $\sN$ with the same indexing set $\bO$. Suppose given weak equivalences 
$\ze_d\colon \bT M_d\rtarr N_d$ for all $d$. Let $\sD$ and $\sE$ be the full 
subcategories of $\sM$ and $\sN$ with objects $\{M_d\}$ and $\{N_d\}$. Then the 
$\sV$-categories $\sD$ and $\sE$ are quasi-equivalent.
\end{lem}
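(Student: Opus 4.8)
The plan is to produce an explicit $(\sD,\sE)$-bimodule together with the basepoints required by \myref{QEquiv}, and then to check the two weak-equivalence conditions by invoking \myref{Cutesy} once and the defining property of a Quillen $\sV$-equivalence once. Since $\sD$ and $\sE$ have the common object set $\bO$, I would set $\sF(d,e) = \ul{\sN}(\bT M_d, N_e)$, which by the $\sV$-adjunction $(\bT,\bU)$ is $\sV$-naturally isomorphic to $\ul{\sM}(M_d,\bU N_e)$. The action of $\sE$ on $\sF$ is composition in $\sN$ in the variable $N_e$; the action of $\sD$ is the map induced by the $\sV$-functor $\bT$ on hom objects, followed by composition in $\sN$ in the variable $\bT M_d$, or equivalently, under the displayed isomorphism, composition in $\sM$ in the variable $M_d$. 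The bimodule axioms then follow from $\sV$-functoriality of $\bT$ and associativity of composition; reconciling this with the precise placement of the $\sD$- and $\sE$-slots in \myref{QEquiv} is a routine relabeling (passing to the transpose $(\sE,\sD)$-bimodule if need be, which is harmless since quasi-equivalence is a symmetric relation). For the basepoints I take $\ze_d\colon \mathbf{I}\rtarr\sF(d,d)=\ul{\sN}(\bT M_d,N_d)$ to be the morphism of $\sV$ corresponding to the given weak equivalence $\ze_d\colon \bT M_d\rtarr N_d$ under $\sN(\bT M_d,N_d)=\sV(\mathbf{I},\ul{\sN}(\bT M_d,N_d))$; unwinding the definition of enriched composition then identifies ``composition with $\ze_d$'' in the sense of \myref{QEquiv} with pre- or post-composition by the underlying map $\ze_d$.

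It remains to check that the two resulting maps into $\sF(d,e)$ are weak equivalences in $\sV$. The one out of $\sE(d,e)=\ul{\sN}(N_d,N_e)$ is precomposition $\ze_d^*$ with $\ze_d\colon \bT M_d\rtarr N_d$; here $\bT M_d$ is cofibrant because $M_d$ is bifibrant and $\bT$ is left Quillen, $N_d$ is bifibrant hence cofibrant, $N_e$ is fibrant, and $\ze_d$ is a weak equivalence, so \myref{Cutesy} shows $\ze_d^*$ is a weak equivalence. The one out of $\sD(d,e)=\ul{\sM}(M_d,M_e)$ is, after transport along $\sF(d,e)\iso\ul{\sM}(M_d,\bU N_e)$, postcomposition $g_*$ with the map $g\colon M_e\rtarr\bU N_e$ adjunct to $\ze_e\colon \bT M_e\rtarr N_e$; concretely $g$ is the composite of the unit $\et_{M_e}\colon M_e\rtarr\bU\bT M_e$ with $\bU\ze_e$. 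Since $\bT M_e$ is cofibrant, $N_e$ is fibrant, $\ze_e$ is a weak equivalence, and $(\bT,\bU)$ is a Quillen $\sV$-equivalence, the adjunct $g$ is itself a weak equivalence (this is one of the standard characterizations of a Quillen equivalence, cf. \myref{Quillad}). As $M_d$ is cofibrant and $M_e$ and $\bU N_e$ are both fibrant, a second application of \myref{Cutesy} shows $g_*$ is a weak equivalence. This establishes the two conditions of \myref{QEquiv}, so $(\sF,\ze)$ is a quasi-equivalence between $\sD$ and $\sE$.

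I expect the main point — as opposed to the routine bimodule bookkeeping — to be the $\sD$-side verification: one must recognize the comparison map $\sD(d,e)\rtarr\sF(d,e)$ as postcomposition by the adjunct of the weak equivalence $\ze_e$, which is exactly where the hypothesis that $(\bT,\bU)$ is a Quillen \emph{equivalence} (rather than merely a Quillen adjunction) is needed; the $\sE$-side, and the verification of the hypotheses of \myref{Cutesy} in both cases, are immediate from the assumption that the $M_d$ and $N_d$ are bifibrant together with the fact that $\bT$ sends the cofibrant objects $M_d$ to cofibrant objects. Once this quasi-equivalence is available, \myref{QuasiEquiv} (when $\mathbf{I}$ is cofibrant) followed by \myref{flesh} produces the promised zigzag of Quillen equivalences between $\mathbf{Pre}(\sD,\sV)$ and $\mathbf{Pre}(\sE,\sV)$, as indicated in the discussion preceding the statement.
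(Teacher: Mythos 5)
Your proposal is correct and follows essentially the same route as the paper: define $\sF(d,e)=\ul{\sN}(\bT M_d,N_e)\iso\ul{\sM}(M_d,\bU N_e)$ with the evident actions, take the $\ze_d$ as the unit maps, and verify the two conditions via \myref{Cutesy}, using on the $\sD$-side that the adjoint $M_e\rtarr\bU N_e$ of $\ze_e$ is a weak equivalence because $(\bT,\bU)$ is a Quillen equivalence, $M_e$ is cofibrant, and $N_e$ is fibrant. Your extra care about which of $(\sD,\sE)$ versus $(\sE,\sD)$ acts on which side, and the explicit identification of the comparison maps, only spells out details the paper leaves implicit.
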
 
\begin{proof} Define 
$$\sF(d,e) = \ul{\sN}(\bT M_d,N_e)\iso \ul{\sM}(M_d,\bU N_e).$$ 
Composition in $\ul{\sN}$ and $\ul{\sM}$ gives $\sF$ an $(\sE,\sD)$-bimodule
structure. The given weak equivalences $\ze_d$ are maps 
$\ze_d\colon \mathbf{I}\rtarr \sF(d,d)$,
and we also write $\ze_d$ for the adjoint weak equivalences $M_d\rtarr \bU N_d$. 
By \myref{Cutesy}, the maps
\[ (\ze_d)^*\colon \ul{\sN}(N_d,N_e) \rtarr \ul{\sN}(\bT M_d,N_e) \ \ \ \text{and}\ \ \ 
\ul{\sM}(M_d,\bU N_e) \ltarr \ul{\sM}(M_d,M_e) \colon  (\ze_e)_*  \]
are weak equivalences since the sources are cofibrant and the targets are fibrant.
\end{proof}

The case $\sM=\sN$ is of particular interest. 

\begin{cor}\mylabel{CutesyTwee} If $\{M_d\}$ and $\{N_d\}$ are two sets of
bifibrant objects of $\sM$ such that $M_d$ is weakly equivalent to $N_d$
for each $d$, then the full $\sV$-subcategories of $\sM$ with object sets
$\{M_d\}$ and $\{N_d\}$ are quasi-equivalent.
\end{cor}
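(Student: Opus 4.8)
The plan is to obtain this as an immediate specialization of Lemma~\myref{CutesyToo}, taking $\sN=\sM$ and letting $(\bT,\bU)$ be the identity $\sV$-adjunction on $\sM$. The identity functor preserves cofibrations, fibrations and weak equivalences, and its unit and counit are identity maps, so $(\Id,\Id)$ is a Quillen $\sV$-equivalence from $\sM$ to itself; thus the ambient hypotheses of Lemma~\myref{CutesyToo} hold for the given bifibrant sets $\{M_d\}$ and $\{N_d\}$, with common indexing set $\bO$.

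The only point requiring care is the input data: Lemma~\myref{CutesyToo} asks for weak equivalences $\ze_d\colon \bT M_d=M_d\rtarr N_d$ in $\sM$, whereas the hypothesis only says $M_d$ is weakly equivalent to $N_d$, which a priori permits a zigzag of weak equivalences through objects that need not be bifibrant. First I would observe that any such zigzag exhibits $M_d$ and $N_d$ as isomorphic in $\text{Ho}\,\sM$; since both are bifibrant, that isomorphism is represented by an honest morphism $\ze_d\colon M_d\rtarr N_d$ of $\sM$, and any morphism between bifibrant objects which becomes invertible in $\text{Ho}\,\sM$ is itself a weak equivalence (the model-categorical Whitehead theorem, e.g. \cite[1.2.8]{Hovey}). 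This is exactly where the bifibrancy assumption is used, and it produces the required weak equivalences $\ze_d$.

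With the $\ze_d$ in hand, Lemma~\myref{CutesyToo} applies verbatim and gives that the full $\sV$-subcategory $\sD$ of $\sM$ with object set $\{M_d\}$ and the full $\sV$-subcategory $\sE$ with object set $\{N_d\}$ are quasi-equivalent; concretely, the quasi-equivalence is the bimodule $\sF(d,e)=\ul{\sM}(M_d,N_e)$ together with the structure maps $\ze_d\colon \mathbf{I}\rtarr \sF(d,d)$, and \myref{Cutesy} (with $\bU=\Id$) is what forces the maps of (\ref{QWeak}) to be weak equivalences. I do not anticipate any real obstacle here: once the passage from "weakly equivalent" to a genuine weak equivalence $M_d\rtarr N_d$ is handled, the statement is a direct instance of Lemma~\myref{CutesyToo}.
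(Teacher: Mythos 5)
Your proposal is correct and is essentially the paper's own argument: the corollary is obtained by specializing \myref{CutesyToo} to $\sN=\sM$ with the identity Quillen $\sV$-equivalence, yielding the bimodule $\sF(d,e)=\ul{\sM}(M_d,N_e)$ with structure maps $\ze_d$. Your extra remark upgrading ``weakly equivalent'' to an honest weak equivalence $\ze_d\colon M_d\rtarr N_d$ between bifibrant objects (via the isomorphism in $\text{Ho}\,\sM$ and the Whitehead theorem) is a reasonable clarification the paper leaves implicit.
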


Unlike \myref{QuasiEquiv}, these results do not assume that $\mathbf{I}$ is cofibrant.   
In our applications in the sequel \cite{GM2}, we can apply \myref{QuasiEquiv} to convert the
resulting quasi-equivalences to weak equivalences to which \myref{flesh} can be applied to obtain 
$\sM$-weak equivalences between functor categories. We indicate how this works and why we have 
no need for \myref{slippery} to make such applications rigorous.

\begin{rem}\mylabel{subtle} In stable homotopy theory, we encounter model categories $\sV$ and $\sV_+$ 
with the same underlying symmetric monoidal category and the same weak equivalences such that the identity functor 
$\sV_+\rtarr \sV$ is a left Quillen equivalence. The unit object $\mathbf{I}$ is cofibrant in $\sV$ but not in $\sV_+$. 
We also encounter interesting $\sV$-enriched categories $\sM$ that are $\sV_+$-model categories but 
{\em not} $\sV$-model categories. Since the weak equivalences in $\sV$ and $\sV_+$ are the same, we can apply
\myref{CutesyToo} to $\sV_+$-model categories
to obtain quasi-equivalences.
These quasi-equivalences can then be fed into 
Propositions \ref{flesh}  and \ref{QuasiEquiv}, using the model category $\sV$ with cofibrant unit.  
 \myref{Muro} (see also \myref{Vpos}) gives an intermediate model structure $\tilde{\sV}_+$ to which 
\myref{slippery} applies, but the logic of our
applications has no need for it.  
\end{rem}

\subsection{The model category $\sV\bO$-$\sC\!at$}\label{modelVO}

As a preliminary to change results for $\sV$ and $\sD$ in the next section,
we need a model category of domain $\sV$-categories 
for categories of presheaves in $\sV$.  In this section, all domain $\sV$-categories $\sD$ 
have the same set of objects $\bO$. This simplifying restriction is not essential
(compare \cite{BM, Lurie, Stan}) but is convenient for our purposes.  Let $\sV\bO$-$\sC\!at$ be the category of 
$\sV$-categories with object set $\bO$ and $\sV$-functors that are the identity 
on objects.  The following result is \cite[6.3]{SS}, and we just sketch the proof. 
Recall our standing hypothesis that $\sV$ is a cofibrantly generated monoidal model category 
(\S1.1 and \myref{monoidal}). For simplicity of exposition, we assume further that $\sV$ 
satisfies the monoid axiom \myref{monax}; as in \myref{monoid}, less stringent hypotheses suffice.  
Similarly, we might weaken the unit hypothesis as in Remarks \ref{usehow} and \ref{slippery}.

\begin{thm}\mylabel{ModCat} The  category $\sV\bO$-$\sC\!at$ is a cofibrantly generated model 
category in which a map $\al\colon \sD\rtarr \sE$ is a weak equivalence or fibration if each 
$\al\colon \sD(d,e)\rtarr \sE(d,e)$ is a weak equivalence or fibration in $\sV$; $\al$ is a 
cofibration if it satisfies the LLP with respect to the acyclic fibrations.  If $\al$ is a 
cofibration and either $\mathbf{I}$ or each $\sD(d,e)$ is cofibrant in $\sV$, then each
$\al\colon \sD(d,e)\rtarr \sE(d,e)$ is a cofibration.
\end{thm}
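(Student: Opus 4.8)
**The plan is to establish the model structure by transfer along a free-forgetful adjunction, then analyze cofibrations separately.**

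For the model structure itself, I would use the standard transfer/lifting criterion (as in Schwede--Shipley \cite{SS}, or \cite[11.3.2]{Hirsch}). There is a forgetful functor from $\sV\bO$-$\sC\!at$ to the product category $\sV^{\bO\times\bO}$ of $\bO\times\bO$-indexed families of objects of $\sV$, with left adjoint the ``free $\sV$-category'' functor. The generating (acyclic) cofibrations of $\sV\bO$-$\sC\!at$ are the images under this free functor of the generating (acyclic) cofibrations of $\sV^{\bO\times\bO}$. Since $\sV$ is cofibrantly generated, the smallness hypotheses needed for the small object argument transfer. The one substantive input is the acyclicity condition: relative cell complexes built from the free acyclic cofibrations must be levelwise weak equivalences. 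This is exactly where the monoid axiom \myref{monax} is used, since such cell attachments are built by iterated pushouts and transfinite composites involving tensoring a generating acyclic cofibration of $\sV$ with other hom-objects, and the monoid axiom guarantees these remain weak equivalences. This gives everything in the first two sentences of the theorem: fibrations and weak equivalences are levelwise, cofibrations are defined by the LLP, and the structure is cofibrantly generated.

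For the final statement about cofibrations, the plan is as follows. First I would observe that it suffices to prove it for the generating cofibrations and then propagate along the small object argument, since an arbitrary cofibration $\al$ is a retract of a relative free-$\mathcal{I}$-cell complex, and retracts and (under the stated hypotheses) transfinite composites of levelwise cofibrations in $\sV$ are levelwise cofibrations. So the heart is: (1) understand the free $\sV$-category on a map $V\rtarr W$ in $\sV^{\bO\times\bO}$ placed in a single spot $(a,b)$, showing that on each hom-object $\sD(d,e)$ the map to $\sE(d,e)$ is obtained from $V\rtarr W$ by tensoring with fixed objects (coproducts of iterated tensor products of hom-objects of $\sD$), hence is a cofibration in $\sV$ provided $\sD(d,e)$ — equivalently, by induction, $\mathbf{I}$ or each original hom-object — is cofibrant, using that $\sV$ is a monoidal model category so that tensoring a cofibration with a cofibrant object yields a cofibration; and (2) show that a pushout in $\sV\bO$-$\sC\!at$ along a free map, computed hom-object by hom-object, is again built from the attaching data by such tensor-and-colimit operations, so the levelwise-cofibration property is preserved, and that cofibrancy of the hom-objects is preserved along the way (this is where the dichotomy ``$\mathbf{I}$ cofibrant'' versus ``each $\sD(d,e)$ cofibrant'' matters: in the first case new hom-objects stay cofibrant automatically as iterated tensors of cofibrant objects starting from $\mathbf{I}$; in the second one tracks cofibrancy through the cell filtration).

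The main obstacle I expect is step (1)--(2): giving an explicit enough description of pushouts of free maps in $\sV\bO$-$\sC\!at$ to see that each hom-object map is a cofibration in $\sV$. Pushouts of algebras over a non-symmetric operad (or monad) are notoriously delicate, and here one needs the ``cofibration hypothesis'' style filtration of the pushout (as in \cite[Ch.~5]{MMSS} or the $\sV\bO$-$\sC\!at$ analysis in \cite{SS}) showing the pushout $\sD\cup_{F V} F W$ has each hom-object filtered with successive quotients built by smashing the pushout-product of $V\rtarr W$ with wedges of hom-objects of $\sD$. Granting that filtration, the pushout-product axiom for $\sV$ plus cofibrancy of the relevant hom-objects finishes it; but setting up that filtration carefully is the technical crux, and I would likely just cite the corresponding computation from \cite{SS} rather than redo it in full.
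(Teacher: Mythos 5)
Your proposal follows essentially the same route as the paper: transfer the model structure along the free--forgetful adjunction between $\sV\bO$-$\sC\!at$ and the product (graph) category, verify smallness and the lifting criterion formally, use the monoid axiom together with the combinatorial filtration of pushouts of free maps (citing Schwede--Shipley) for the acyclicity condition, and deduce the levelwise-cofibration statement from that same pushout analysis plus the pushout-product axiom and cofibrancy of $\mathbf{I}$ or of the hom objects. The only point the paper adds that you omit is the small caveat that the box product on graphs is not symmetric, which is handled by invoking the symmetry of $\sV$ levelwise; otherwise your argument matches the paper's sketch.
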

\begin{proof}[Sketch proof]
Define the category $\sV\bO$-$\sG\!raph$ to be the product of copies of $\sV$ indexed on the set $\bO\times \bO$. 
Thus an object is a set $\{\sC(d,e)\}$ of objects of $\sV$.  As a product of model categories,  $\sV\bO$-$\sG\!raph$
is a model category.  A map is a weak equivalence, fibration or cofibration if each of its components is so.
Say that $\sC$ is concentrated at $(d,e)$ if $\sC(d',e') = \ph$, the initial object, for $(d',e')\neq (d,e)$.   
For $V\in \sV$, write $V(d,e)$ for the graph concentrated at $(d,e)$ with value $V$ there.
The model category $\sV\bO$-$\sG\!raph$ is cofibrantly generated. Its generating cofibrations and acyclic 
cofibrations are the maps $\al(d,e)\colon V(d,e) \rtarr W(d,e)$ specified by generating cofibrations or generating acyclic cofibrations $V\rtarr W$ in $\sV$.  

The category $\sV\bO$-$\sG \! raph$ is monoidal with product denoted $\Box$. The $(d,e)^{th}$ object of $\sD \Box \sE$ 
is the coproduct over $c\in \bO$ of $\sE(c,e)\otimes \sD(d,c)$. The unit object is the $\sV\bO$-graph $\mathbf{I}$ 
with $\mathbf{I}(d,d) = \mathbf{I}$ and $\mathbf{I}(d,e) = \ph$ if $d\neq e$.  The category 
$\sV\bO$-$\sC\!at$ is the category of monoids in $\sV\bO$-$\sG\!raph$, hence there is a forgetful functor
\[ \bU\colon \sV\bO\mbox{-}\sC\!at \rtarr \sV\bO\mbox{-}\sG\!raph \] 
This functor has a left adjoint $\bF$ that constructs the free $\sV\bO$-$\sC\!at$ generated by a
$\sV\bO$-$\sG\!raph$ $\sC$. The construction is analogous to the construction of a tensor algebra.
The $\sV$-category $\bF\sC$ is the coproduct of its homogeneous parts $\bF_p\sC$ of ``degree $p$ monomials''. 
Explicitly, $\bF_0\sC = \mathbf{I}[\bO] = \amalg\ \mathbf{I}(d,d)$, $(\bF_1\sC)(d,e) = \sC(d,e)$, and, for $p>1$, 
\[ (\bF_p\sC)(d,e) = \coprod_{(d_i)} \sC(d_{p-1},e)\otimes \sC(d_{p-2},d_{p-1})
\otimes \cdots \otimes \sC(d_1,d_2)\otimes  \sC(d,d_1). \]
The unit map $\mathbf{I} \rtarr \bF(d,d)$ is given by the identity map 
$\mathbf{I}\rtarr \mathbf{I}(d,d)\subset (\bF\sC)(d,d)$.   The composition is given by the evident 
$\otimes$-juxtaposition maps. 

The generating cofibrations and acyclic cofibrations are obtained by applying $\bF$ to the 
generating cofibrations and acyclic cofibrations of $\sV\bO$-$\sG\!raph$. A standard implication
of \myref{criterion} applies to the adjunction $(\bF,\bU)$. The assumed applicability of the small object
argument to the generating cofibrations and acyclic cofibrations in $\sV$ implies its applicability to the
generating cofibrations and acyclic cofibrations in $\sV\bO$-$\sC\!at$, and condition (ii) of \myref{criterion} is a 
formal consequence of its analogue for $\sV\bO$-$\sG\!raph$.  Thus to prove the model axioms it remains only
to verify the acyclicity condition (i).  The relevent cell complexes are defined using coproducts, pushouts,
and sequential colimits in $\sV\bO$-$\sC\!at$, and the monoid axiom (or an analogous result under weaker
hypotheses) is used to prove that.  The details are essentially the same as in the one object case, which is 
treated in \cite[6.2]{SS0}, with objects $\sD(d,e)$ replacing copies of a monoid in $\sV$ in the argument. 
The proof relies on combinatorial analysis of the relevant pushouts. As noted in the proof of \cite[6.3]{SS2}, there 
is a slight caveat to account for the fact that \cite[6.2]{SS0} worked with a symmetric monoidal category,
whereas the product $\Box$ on $\sV\bO$-$\sG\!raph$ is not symmetric. However, the levelwise definition of the 
model structure on $\sV\bO$-$\sG\!raph$ allows use of the symmetry in $\sV$ at the relevant place in the proof.
\end{proof}

\section{Changing the categories $\sV$, $\sD$, and $\sM$}\label{changeDMV}

Let us return to Baez's joke and compare simplicial and topological enrichments,
among other things.  Throughout this section, we consider an adjunction
\begin{equation}\label{VW}
\xymatrix{\sV\ar@<.4ex>[r]^\bT & \sW \ar@<.4ex>[l]^\bU}
\end{equation}
between symmetric monoidal categories $\sV$ and $\sW$.  We work categorically 
until otherwise specified, ignoring model categorical structure. We also ignore 
presheaf categories for the moment.

Consider a $\sV$-category
$\sM$ and a $\sW$-category $\sN$.  Remember the distinction between thinking of
the term ``enriched category'' as a noun and thinking of ``enriched'' as an adjective
modifying ``category''.  From the former point of view, we can try to define a $\sV$-category
$\bU \sN$ by setting $\ul{\bU\sN}(X,Y) = \bU \ul{N}(X,Y)$, where $X,Y\in \sN$, and we can try to define a 
$\sW$-category $\bT \sM$ by setting $\ul{\bT\sM}(X,Y) = \bT \ul{N}(X,Y)$, where $X,Y\in \sM$.  Of
course, our attempts fail to give unit and composition laws unless the functors $\bU$ and
$\bT$ are sufficiently monoidal, but if they are then this can work in either direction.  

However, if we think of ``enriched category'' as a noun,
then we think of the underlying categories $\sM$ and $\sN$ as fixed and given.  To have our 
attempts work without changing the underlying category, we would have to have isomorphisms
\[ \sV(\mathbf{I}, \bU \ul{\sN}(X,Y)) \iso \sW(\mathbf{J},\ul{\sN}(X,Y)) \]
or 
\[ \sW(\mathbf{J}, \bT \ul{\sM}(X,Y)) \iso \sV(\mathbf{I},\ul{\sM}(X,Y)) \]
where $\mathbf{I}$ and $\mathbf{J}$ are the units of $\sV$ and $\sW$. The latter is
not plausible, but the former holds by the adjunction provided that $\bT\mathbf{I}\iso \mathbf{J}$.
We conclude that it is reasonable to transfer enrichment along a right adjoint but not along a 
left adjoint.  

In particular, if $\bT$ is geometric realization $sSet\rtarr \sU$ and $\bU$ is the
total singular complex functor, both of which are strong symmetric monoidal with respect to 
cartesian product, then $\bT\mathbf{I}\iso \mathbf{J}$ (a point) and we can pull 
back topological enrichment to simplicial enrichment without changing the underlying category, 
but not the other way around.  This justifies preferring simplicial enrichment
to topological enrichment and should allay Baez's suspicion. Nevertheless, it is sensible to 
use topological enrichment when that is what appears naturally.

\subsection{Changing the enriching category $\sV$}\label{SecChV}

We describe the categorical relationship between adjunctions and enriched categories
in more detail. The following 
result is due to Eilenberg and Kelly \cite[6.3]{EK}.  Recall that 
$\bT\colon \sV\rtarr \sW$ is lax symmetric monoidal
if we have a map $\nu\colon \mathbf{J}\rtarr \bT \mathbf{I}$ and a natural map 
$$\om\colon  \bT V\otimes \bT V'\rtarr \bT(V\otimes V')$$
that are compatible with the coherence data (unit, associativity, and symmetry
isomorphisms); $\bT$ is op-lax monoidal if the arrows point the other way, and
$\bT$ is strong symmetric monoidal if $\nu$ is an isomorphism and $\om$ is a natural 
isomorphism.  We are assuming that $\bT$ has a right adjoint $\bU$.  If $\bU$ is lax 
symmetric monoidal, then $\bT$ is op-lax symmetric monoidal via the adjoints of 
$\mathbf{I}\rtarr \bU\mathbf{J}$ and the natural composite
\[ \xymatrix@1{
 V\otimes V' \ar[r] & \bU\bT V\otimes \bU\bT V' \rtarr \bU(\bT V\otimes \bT V').} \\ \] 
The dual also holds. It follows that if $\bT$ is strong symmetric monoidal, then $\bU$ is 
lax symmetric monoidal. 

\begin{prop}\mylabel{mylem} Let $\sN$ be a bicomplete $\sW$-category. Assume that $\bU$ 
is lax symmetric monoidal and the adjoint $\bT \mathbf{I}\rtarr \mathbf{J}$ 
of the unit comparison map $\mathbf{I}\rtarr \bU \mathbf{J}$ is an isomorphism. 
Letting
\[\ul{\sM}(M,N) = \bU\ul{\sN}(M,N), \]
we obtain a $\sV$-category $\sM$ with the same underlying category as $\sN$. If, further, 
$\bT$ is strong symmetric monoidal, then $\sM$ is a bicomplete $\sV$-category\footnote{If the functor 
$\sV(\mathbf{I},-)\colon \sV\rtarr\mathrm{Set}$ is conservative (reflects isomorphisms), 
as holds for example when $\sV=\mathrm{Mod}_k$, then $\sM$ becomes a bicomplete $\sV$-category 
without the assumption that $\bT$ is strong symmetric monoidal.} with
\[M\odot V = M\odot \bT(V) \ \ \ \text{ and }\ \ \ F(V,M) = F(\bT V, M).\]
\end{prop}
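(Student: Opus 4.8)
The plan is to verify first that the formula $\ul{\sM}(M,N) = \bU\ul{\sN}(M,N)$ genuinely defines a $\sV$-category with the stated underlying category, and then, under the stronger hypothesis, to identify the tensors and cotensors and check that they satisfy the enriched adjunction characterizing them. For the first part I would exhibit the unit maps and the composition. The unit map $\mathbf{I}\rtarr \ul{\sM}(M,M) = \bU\ul{\sN}(M,M)$ is the adjoint of the composite $\mathbf{J}\xrightarrow{\sim}\bT\mathbf{I}\rtarr \ul{\sN}(M,M)$ (using that $\bT\mathbf{I}\to\mathbf{J}$ is an isomorphism, or rather its inverse). The composition $\ul{\sM}(N,P)\otimes\ul{\sM}(M,N)\rtarr\ul{\sM}(M,P)$ is obtained from the lax monoidal structure map $\om\colon \bU A\otimes \bU B\rtarr \bU(A\otimes B)$ followed by $\bU$ applied to the composition in $\sN$. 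Associativity and unitality of this composition follow formally from the coherence axioms for the lax symmetric monoidal functor $\bU$ together with those for $\sN$; this is exactly the Eilenberg--Kelly change-of-enrichment cited as \myref{mylem} above, so I would simply invoke it rather than redo the diagram chase. To see that the underlying category is unchanged, compute
\[
\sM(M,N) = \sV(\mathbf{I},\ul{\sM}(M,N)) = \sV(\mathbf{I},\bU\ul{\sN}(M,N)) \iso \sW(\bT\mathbf{I},\ul{\sN}(M,N)) \iso \sW(\mathbf{J},\ul{\sN}(M,N)) = \sN(M,N),
\]
using the adjunction $(\bT,\bU)$ and the isomorphism $\bT\mathbf{I}\iso\mathbf{J}$; one should also check this isomorphism is compatible with composition, which again is part of \myref{mylem}.

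For the second part, assume $\bT$ is strong symmetric monoidal. Since $\sN$ is a bicomplete $\sW$-category it has tensors $M\odot_{\sW} W$ and cotensors $F_{\sW}(W,M)$, characterized by enriched adjunction isomorphisms
\[
\ul{\sN}(M\odot_{\sW} W, N)\iso \ul{\sW}(W,\ul{\sN}(M,N)) \iso \ul{\sN}(M,F_{\sW}(W,N))
\]
in $\sW$. I would propose the definitions $M\odot V := M\odot_{\sW}\bT V$ and $F(V,M) := F_{\sW}(\bT V, M)$ and verify the defining $\sV$-adjunction $\ul{\sM}(M\odot V, N)\iso \ul{\sV}(V,\ul{\sM}(M,N))$. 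Chasing through:
\[
\ul{\sM}(M\odot_{\sW}\bT V, N) = \bU\ul{\sN}(M\odot_{\sW}\bT V, N) \iso \bU\ul{\sW}(\bT V, \ul{\sN}(M,N)),
\]
so the content is an isomorphism $\bU\ul{\sW}(\bT V, A)\iso \ul{\sV}(V,\bU A)$ in $\sV$, natural in $V\in\sV$ and $A\in\sW$. This is the \emph{enriched} form of the adjunction $(\bT,\bU)$: when $\bT$ is strong symmetric monoidal one can enrich it, getting a $\sV$-adjunction whose right adjoint is $\bU$ (with its lax structure) applied to internal homs, and the above is precisely the hom-object statement. I would establish it by constructing the comparison map from $\om$ and the counit $\bT\bU\to\id$ and checking it is an isomorphism by applying $\sV(\mathbf{I},-)$ and using the ordinary adjunction plus the Yoneda lemma in $\sV$; alternatively one reduces it to the known fact that a strong monoidal left adjoint induces an enriched adjunction (Kelly). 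The cotensor statement is the formal dual, using $\ul{\sN}(M,F_{\sW}(\bT V,N))\iso\ul{\sW}(\bT V,\ul{\sN}(M,N))$ and the same enriched-adjunction isomorphism.

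The main obstacle I anticipate is the enriched adjunction isomorphism $\bU\ul{\sW}(\bT V,A)\iso\ul{\sV}(V,\bU A)$ in $\sV$ — not merely the underlying bijection of morphism sets, which is immediate, but the statement at the level of internal-hom objects, which is where strong (as opposed to merely lax) monoidality of $\bT$ is genuinely needed: one needs the inverse $\bT(V\otimes V')\iso\bT V\otimes\bT V'$ to build the map in the correct direction and to see it is invertible. The footnote's alternative hypothesis — that $\sV(\mathbf{I},-)$ is conservative — is exactly what lets one bypass this: then it suffices to know the isomorphism after applying $\sV(\mathbf{I},-)$, which is the ordinary adjunction, so lax monoidality of $\bT$ (equivalently, the already-assumed lax monoidality of $\bU$) is enough. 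Everything else is routine coherence bookkeeping that I would compress by citing \myref{mylem} and standard enriched category theory (Kelly \cite{Kelly}, Borceux \cite{BorII}).
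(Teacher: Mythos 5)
Your proposal is correct and takes essentially the same route as the paper: transport units and composition along the lax symmetric monoidal structure of $\bU$, use the isomorphism $\bT\mathbf{I}\iso \mathbf{J}$ to identify the underlying categories, and reduce the identification of tensors and cotensors to the enriched adjunction isomorphism $\ul{\sV}(V,\bU W)\iso \bU\ul{\sW}(\bT V,W)$, which the paper likewise obtains from strong monoidality of $\bT$ via a Yoneda argument making $(\bT,\bU)$ a $\sV$-adjoint pair. One small caution: in your first suggested verification of that isomorphism, applying only $\sV(\mathbf{I},-)$ is not enough unless the footnote's conservativity hypothesis holds; you should instead test against $\sV(U,-)$ for all $U$ (using strong monoidality of $\bT$ and the ordinary adjunction) and then apply the Yoneda lemma, which is exactly the paper's argument and is equivalent to your alternative appeal to Kelly.
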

\begin{proof}  Using the product comparison map 
\[ \bU \ul{\sN}(M,N) \otimes \bU \ul{\sN}(L,M)
\rtarr \bU(\ul{\sN}(M,N)\otimes \ul{\sN}(L,M)), \]
we see that the composition functors for $\ul{\sN}$ induce composition functors for 
$\ul{\sM}$.  The composites of the unit comparison map and the unit maps 
$\mathbf{J}\rtarr \ul{\sN}(M,M)$ in $\sW$ induce unit
maps $\mathbf{I}\rtarr \sM(M,M)$ in $\sV$.  As we have implicitly noted, this much makes sense 
even without the adjoint $\bT$ and would apply equally well with the roles of $\bU$ and $\bT$ reversed;
our hypotheses ensure that the underlying categories of $\sN$ and $\sM$ are the same.

Now assume that $\bT$ is strong symmetric monoidal. For each $V\in \sV$ and $W\in\sW$, 
a Yoneda argument provides an isomorphism
\[\ul{\sV}(V,\bU W)\iso \bU\ul{\sW}(\bT V,W)\]
that makes the pair of $\sV$-functors $(\bT, \bU)$ into a $\sV$-adjoint pair (\ref{Vadj}).
In particular, this gives an isomorphism 
\[\ul{\sV}(V,\bU\ul{\sN}(M,N))\iso \bU \ul{\sW}(\bT V, \ul{\sN}(M,N)). \]
By the adjunctions that define $\sW$-tensors and $\sW$-cotensors in $\sN$, this gives natural isomorphisms
\[ \bU\ul{\sN}(M\odot\bT V,N) \iso \ul{\sV}(V,\bU\ul{\sN}(M,N))
\iso \bU\ul{\sN}(M,F(\bT V, Y)) \]
which imply the claimed identification of $\sV$-tensors and $\sV$-cotensors in $\sM$.
\end{proof}
 
\begin{exmp} As observed in \myref{Good}, we have a strong monoidal functor  
$\mathbf{I}[-]\colon \mathrm{Set}\to\sV$. It is left adjoint to 
$\sV(\mathbf{I},-)\colon \sV\to\mathrm{Set}$. The change of enrichment 
given by \myref{mylem} produces the underlying category of a $\sV$-category.
\end{exmp}

\begin{exmp}\mylabel{GeomSingExmp} Consider the adjunction 
\[ \xymatrix{
sSet\ar@<.4ex>[r]^-{\bT} & \sU \ar@<0.4ex>[l]^-{\bS},  }\] 
where $\bT$ and $\bS$ are the geometric realization and total singular complex functors.    
Since $\bT$ and $\bS$ are strong symmetric monoidal, \myref{mylem} shows that any 
category enriched and bitensored over $\sU$ is canonically enriched and bitensored over $sSet$.
\end{exmp}

\begin{rem}\mylabel{VWEnrich} In (\ref{Vadj}), we consider enriched adjunctions between categories both
enriched over a fixed $\sV$.  One can ask what it should mean for the adjunction (\ref{VW})
to be enriched.  A reasonable answer is that there should be unit and counit maps 
\[ \ul{\sV}(V,V')\rtarr \ul{\sV}(\bU\bT V,\bU\bT V')  \ \ \text{and}\ \ 
\ul{\sW}(\bT\bU W,\bT\bU W')\rtarr \ul{\sW}(W,W') \]
in $\sV$ and $\sW$, respectively.  However, this fails for \myref{GeomSingExmp} since the function
\[  \sU(\bT\bS X,\bT\bS Y) \rtarr  \sU(X,Y) \]
induced by the counit is not continuous.
\end{rem}

\myref{mylem} is relevant to many contexts in which we use two related enrichments 
simultaneously. Such double enrichment is intrinsic to equivariant theory, as we 
see in \cite{GMequiv}, and to the relationship between spectra and spaces.

\begin{exmp}\mylabel{Enrichspec} Let $\sT$ be the closed symmetric monoidal category of 
nondegenerately based spaces in $\sU$ and let $\sS$ be some good closed symmetric monoidal 
category of spectra, such as the categories of symmetric or orthogonal spectra.
While interpretations vary with the choice of $\sS$, we always have a zeroth space 
(or zeroth simplicial set) functor, which we denote by $\text{ev}_0$.  It has a 
left adjoint, which we denote by $F_0$.  We might also write $F_0 = \SI^{\infty}$ and 
$\text{ev}_0 = \OM^{\infty}$, but homotopical understanding requires fibrant 
and/or cofibrant approximation, depending on the choice of $\sS$.
We assume that $F_0$ is strong symmetric monoidal, as holds for symmetric and orthogonal 
spectra \cite[1.8]{MMSS}.  By \myref{mylem}, $\sS$ is then enriched over $\sT$ as well
as over itself. The based space $\sS(X,Y)$ of maps $X\rtarr Y$ is 
\[ \sS(X,Y) = \text{ev}_0(\ul{\sS}(X,Y)). \]
\end{exmp}

Returning to model category theory, suppose that we are in the situation of \myref{mylem} 
and that $\sV$ and $\sW$ are monoidal model categories and $\sM$ is a $\sW$-model category. 
It is natural to ask under what conditions on the adjunction $(\bT,\bU)$ the resulting 
$\sV$-category $\sM$ becomes a $\sV$-model category. Recall the following definition
from \cite[4.2.16]{Hovey}.

\begin{defn}\mylabel{monQuill} A monoidal Quillen adjunction $(\bT,\bU)$ between 
symmetric mon\-oid\-al model
categies is a Quillen adjunction in which the left adjoint $\bT$ is strong symmetric 
monoidal and the map $\bT(Q \mathbf{I})\to\bT(\mathbf{I})$ is a weak equivalence. 
\end{defn}

The following result is essentially the same as \cite[A.5]{Dugger} 
(except that the compatibility of $\bT$ with a cofibrant replacement of 
$\mathbf{I}$ is not mentioned there).

\begin{prop}\mylabel{ChangeModelEnr} Let $\xymatrix{\sV\ar@<.4ex>[r]^\bT & \sW \ar@<.4ex>[l]^\bU}$
be a monoidal Quillen adjunction between symmetric monoidal model categories. 
Suppose that $\sM$ is a $\sW$-model category. Then the enrichment of $\sM$ in $\sV$ of \myref{mylem} makes $\sM$ into a $\sV$-model category. 
\end{prop}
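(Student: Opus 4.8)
The plan is to check directly the two model-categorical axioms defining a $\sV$-model category (\myref{monoidal}) --- the pushout--product axiom and the unit axiom --- for $\sM$ equipped with the $\sV$-tensor $M\odot_{\sV}V = M\odot_{\sW}\bT V$ produced in \myref{mylem}. The underlying model structure on $\sM$ is not changed, and \myref{mylem} already supplies the bicomplete $\sV$-enrichment (using that $\bT$ is strong symmetric monoidal), so only these two compatibility axioms remain. The governing observation is that $\bT$ is a strong symmetric monoidal left Quillen functor: it preserves (acyclic) cofibrations, sends $\mathbf{I}$ to $\mathbf{J}$ up to isomorphism, and the formation of $\odot_{\sV}$, together with its associated pushout--products, is obtained by first applying $\bT$ and then using the $\sW$-structure on $\sM$.

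For the pushout--product axiom, let $i\colon V\rtarr V'$ be a cofibration in $\sV$ and $j\colon M\rtarr M'$ a cofibration in $\sM$. Since $\bT$ is left Quillen, $\bT i\colon \bT V\rtarr \bT V'$ is a cofibration in $\sW$, and it is acyclic if $i$ is. As $\odot_{\sV}=\odot_{\sW}\circ(\mathrm{id}\times\bT)$ and colimits in $\sM$ are those of its underlying category, the pushout--product of $i$ and $j$ formed with $\odot_{\sV}$ is literally the pushout--product of $\bT i$ and $j$ formed with $\odot_{\sW}$. The pushout--product axiom for the $\sW$-model category $\sM$ then shows this map is a cofibration in $\sM$, acyclic if $\bT i$ or $j$ is, hence acyclic if $i$ or $j$ is.

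For the unit axiom, fix a cofibrant approximation $\bQ\mathbf{I}\rtarr \mathbf{I}$ in $\sV$ and a cofibrant object $M$ of $\sM$; we must see that the induced map $M\odot_{\sV}\bQ\mathbf{I}\rtarr M\odot_{\sV}\mathbf{I}\iso M$ is a weak equivalence. Unwinding definitions, this is the map $M\odot_{\sW}\bT\bQ\mathbf{I}\rtarr M\odot_{\sW}\bT\mathbf{I}\iso M\odot_{\sW}\mathbf{J}\iso M$ induced by applying $\bT$ to $\bQ\mathbf{I}\rtarr\mathbf{I}$. Here $\bT\bQ\mathbf{I}$ is cofibrant in $\sW$ (a left Quillen functor applied to a cofibrant object), and the composite $\bT\bQ\mathbf{I}\rtarr\bT\mathbf{I}\iso\mathbf{J}$ is a weak equivalence by the defining clause of a monoidal Quillen adjunction (\myref{monQuill}); thus $\bT\bQ\mathbf{I}\rtarr\mathbf{J}$ is a cofibrant approximation of the unit $\mathbf{J}$ of $\sW$. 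Choosing a cofibrant approximation $\bQ\mathbf{J}\rtarr\mathbf{J}$ that is an acyclic fibration and lifting, we get a weak equivalence $\bT\bQ\mathbf{I}\rtarr\bQ\mathbf{J}$ between cofibrant objects of $\sW$ commuting with the maps to $\mathbf{J}$. Since $M\odot_{\sW}(-)$ is left Quillen for cofibrant $M$ (apply the $\sW$-pushout--product axiom to $\emptyset\rtarr M$), Ken Brown's lemma shows $M\odot_{\sW}\bT\bQ\mathbf{I}\rtarr M\odot_{\sW}\bQ\mathbf{J}$ is a weak equivalence; composing with the weak equivalence $M\odot_{\sW}\bQ\mathbf{J}\rtarr M$ furnished by the unit axiom for the $\sW$-model category $\sM$ and using two out of three gives that $M\odot_{\sW}\bT\bQ\mathbf{I}\rtarr M$ is a weak equivalence, as needed.

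The only step that is not completely formal is this last one: the point is to recognize $\bT\bQ\mathbf{I}$ as a cofibrant approximation of $\mathbf{J}$, which is exactly where both the strong symmetric monoidality of $\bT$ and the extra ``$\bT(\bQ\mathbf{I})\rtarr\bT(\mathbf{I})$ is a weak equivalence'' requirement of \myref{monQuill} enter, and then to invoke the standard fact that the unit axiom of an enriched model category is insensitive to the choice of cofibrant replacement of the unit object (itself a consequence of Ken Brown's lemma for the left Quillen functor $M\odot_{\sW}(-)$). Everything else reduces to $\bT$ being a strong monoidal left Quillen functor together with the corresponding axioms already holding for $\sM$ over $\sW$.
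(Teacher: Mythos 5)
Your proposal is correct. The paper in fact gives no proof of \myref{ChangeModelEnr} at all, citing Dugger [A.5] as ``essentially the same'' result, so there is no in-paper argument to compare against; your argument is the natural transport-of-structure proof that the citation is standing in for. Both halves check out: the pushout--product part is literally the $\sW$-axiom for $\sM$ applied to $\bT i$ (using that $\bT$ is left Quillen and that $\odot_{\sV}$ is $\odot_{\sW}\circ(\mathrm{id}\times\bT)$ on the nose, so the two pushout--products coincide), and the unit part correctly identifies $\bT\bQ\mathbf{I}\rtarr\mathbf{J}$ as a cofibrant approximation and uses Ken Brown's lemma for $M\odot_{\sW}(-)$ to compare it with $\bQ\mathbf{J}\rtarr\mathbf{J}$. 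It is worth noting that the place where you invoke the weak equivalence $\bT(\bQ\mathbf{I})\rtarr\bT(\mathbf{I})$ from \myref{monQuill} is precisely the point the paper flags in the sentence preceding the proposition (``the compatibility of $\bT$ with a cofibrant replacement of $\mathbf{I}$ is not mentioned there''), so your proof makes explicit why that extra clause was added to the hypotheses.
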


\begin{cor} Any topological model category has a canonical structure of a simplicial model category.
\end{cor}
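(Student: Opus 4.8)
The plan is to specialize \myref{ChangeModelEnr} to the geometric realization/total singular complex adjunction of \myref{GeomSingExmp}. By a \emph{topological model category} I mean a $\sU$-model category, where $\sU$ is the cartesian symmetric monoidal model category of (compactly generated) spaces with its standard Quillen model structure. Take $\sV = sSet$ and $\sW = \sU$, let $\bT\colon sSet\rtarr \sU$ be geometric realization, and let $\bU = \bS\colon \sU\rtarr sSet$ be the total singular complex functor. Then \myref{ChangeModelEnr} will equip any $\sU$-model category $\sM$ with a $\sV = sSet$-model structure, i.e. a simplicial model structure, whose hom simplicial sets are $\ul{\sM}(M,N) = \bS\,\ul{\sM}_{\sU}(M,N)$ as in \myref{mylem}.

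So the only thing to check is that $(\bT,\bS)$ is a monoidal Quillen adjunction in the sense of \myref{monQuill}, which has three parts. First, $(\bT,\bS)$ is a Quillen adjunction: $\bT$ is cocontinuous and carries the generating cofibrations $\partial\Delta^n\rtarr\Delta^n$ and generating acyclic cofibrations $\Lambda^n_k\rtarr\Delta^n$ of $sSet$ to cofibrations and acyclic cofibrations of $\sU$, so $\bT$ is left Quillen. Second, $\bT$ is strong symmetric monoidal for the cartesian products, i.e. the canonical map $|X|\times|Y|\rtarr|X\times Y|$ is a homeomorphism and $|\Delta^0|$ is a one-point space; this is the classical fact that realization preserves finite products. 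Third, the comparison map $\bT(Q\mathbf{I})\rtarr\bT(\mathbf{I})$ is a weak equivalence: here $\mathbf{I} = \Delta^0$ is already cofibrant in $sSet$, so we may take $Q\mathbf{I} = \mathbf{I}$ and the map is the identity. Given these, \myref{ChangeModelEnr} applies verbatim.

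The word ``canonical'' reflects that this simplicial enrichment is obtained by applying the single fixed functor $\bS$ to the given topological enrichment, with no choices involved — which is exactly why one transfers enrichment along the right adjoint $\bS$ rather than along $\bT$. The only real obstacle is foundational bookkeeping rather than mathematics: strong symmetric monoidality of $\bT$ (the second point above) fails for the category of all topological spaces, since $|X|\times|Y|\rtarr|X\times Y|$ need not be a homeomorphism there, so $\sU$ must be taken to be a convenient category such as compactly generated (weak Hausdorff) spaces. Once that convention is in force, all three hypotheses of \myref{monQuill} are standard and the corollary is immediate.
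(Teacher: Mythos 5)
Your proposal is correct and is exactly the argument the paper intends: the corollary is an immediate specialization of \myref{ChangeModelEnr} to the geometric realization/singular complex adjunction of \myref{GeomSingExmp}, using that realization is left Quillen and strong symmetric monoidal (in a convenient category of spaces) and that $\mathbf{I}=\Delta^0$ is already cofibrant. Your verification of the three conditions of \myref{monQuill}, including the remark on compactly generated spaces, matches the paper's implicit reasoning.
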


\subsection{Categorical changes of $\sV$ and $\sD$}

Still considering the adjunction (\ref{VW}), we now assume that $\bT$ is strong symmetric 
monoidal and therefore $\bU$ is lax symmetric monoidal.  We consider changes of presheaf 
categories in this context, working categorically in this section and model categorically 
in the next.  We need some elementary formal structure that relates categories of 
presheaves whose domain $\sV$-categories or $\sW$-categories have a common fixed object set 
$\bO = \{d\}$. To see that  the formal structure really is elementary, it is helpful to think of 
$\sV$ and $\sW$ as the categories of modules over commutative rings $R$ and $S$, and consider base 
change functors associated to a ring homomorphism $\phi\colon R\rtarr S$. To ease the translation, 
think of presheaves $\sD^{op}\rtarr \sV$ as right $\sD$-modules and covariant functors $\sD\rtarr \sV$ 
as left $\sD$-modules. This point of view was used already in \S\ref{changeDM}. We use the categories introduced in \S\ref{modelVO}.

We have two adjunctions induced by (\ref{VW}). The first is obvious, namely 
\begin{equation}\label{twoad}
 \xymatrix@1{ \sV\bO\mbox{-}\sC\!at\ \ar@<0.4ex>[r]^-{\bT}  
 &\ \sW\bO\mbox{-}\sC\!at \ar@<0.4ex>[l]^-{\bU}.\\ }
\end{equation}
The functors $\bT$ and $\bU$ are obtained by applying the functors $\bT$ and $\bU$ of (\ref{VW}) to morphism objects 
of $\sV\bO$-categories and $\sW\bO$-categories.  Since $\bT$ and $\bU$ are lax symmetric monoidal, they preserve composition.

The second is a little less obvious. Consider $\sD\in \sV\bO$-$\sC\!at$ and $\sE\in \sW\bO$-$\sC\!at$
and let $\ph\colon \sD\rtarr  \bU\sE$ be a map of $\sV$-categories;
equivalently, we could start with the adjoint $\tilde{\ph}\colon \bT\sD\rtarr \sE$.
We then have an induced adjunction
\begin{equation}\label{threead}
 \xymatrix@1{ \mathbf{Pre} (\sD,\sV)\ \ar@<0.4ex>[r]^-{\bT_{\ph}}  
 &\ \mathbf{Pre} (\sE,\sW) \ar@<0.4ex>[l]^-{\bU_{\ph}}.\\ } 
\end{equation}
To see this, let $X\in \mathbf{Pre} (\sD,\sV)$ and $Y\in \mathbf{Pre} (\sE,\sW)$.  The presheaf 
$\bU_{\ph} Y\colon \sD^{op}\rtarr \sV$ 
is defined via the adjoints of the following maps in $\sV$.
\[ \xymatrix@1{
\sD(d,e)\otimes_{\sV} \bU Y_e\ar[r]^-{\ph\otimes \id } 
& \bU\sE(d,e)\otimes_{\sV} \bU Y_e \ar[r] & \bU(\sE(d,e)\otimes_{\sW} Y_e)\ar[r] & \bU Y_d.\\} \]
The presheaf $\bT_{\ph} X\colon \sE^{op}\rtarr \sW$ is obtained by an extension of scalars that can be written 
conceptually as $\bT X\otimes_{\bT\sD}\bY$. To make sense of this, recall that we have the 
represented presheaves $\bY(e)$ such that $\bY(e)_d = \sE(d,e)$. As
$e$-varies, these define a covariant $\sW$-functor $\bY\colon \sE \rtarr \mathbf{Pre} (\sE,\sW)$. 
Pull this back via $\ph$ to obtain a covariant $\sW$-functor $\bT\sD\rtarr \mathbf{Pre} (\sE,\sW)$.
The tensor product is the coequalizer
\[  \xymatrix@1{
\coprod_{d,e}\bT X_e\otimes_{\sW} \bT \sD(d,e) \otimes_{\sW} \bY(d) \ar@<1ex>[r] \ar@<-1ex>[r] &
\coprod_d \bT X_d \otimes_{\sW} \bY(d) \ar[r] & \bT X \otimes_{\bT \sD} \bY \equiv \bT_{\ph}X,\\} \]
where the parellel arrows are given by the functors $\bT X$ and $\bY$. Composition on the right makes this a contravariant functor $\sE\rtarr \sW$. 

There are two evident special cases, which are treated in \cite[App A]{DS}.  The first is obtained by starting with $\sE$ and taking $\ph$ to be $\id\colon \bU \sE\rtarr \bU \sE$. This gives an adjunction
\begin{equation}\label{fourad}
 \xymatrix@1{ \mathbf{Pre} (\bU\sE,\sV)\ \ar@<0.4ex>[r]^-{\bT}  
 &\ \mathbf{Pre} (\sE,\sW) \ar@<0.4ex>[l]^-{\bU}.\\ } 
\end{equation}
The second is obtained by starting with $\sD$ and taking $\ph$ to be $\et\colon \sD \rtarr \bU\bT \sD$.
This gives an adjunction
\begin{equation}\label{fivead}
 \xymatrix@1{ \mathbf{Pre} (\sD,\sV)\ \ar@<0.4ex>[r]^-{\bT_{\et}}  
 &\ \mathbf{Pre} (\bT\sD,\sW) \ar@<0.4ex>[l]^-{\bU_{\et}}. \\}
\end{equation}
The adjunction (\ref{threead}) factors as the composite of the adjunction (\ref{fourad}) and an adjunction 
of the form $(\ph_!,\ph^*)$:
\begin{equation}\label{sixad}
 \xymatrix@1{ \mathbf{Pre} (\sD,\sV)\ \ar@<0.4ex>[r]^-{\ph_!} & \mathbf{Pre} (\bU\sE,\sV)\ \ar@<0.4ex>[r]^-{\bT}  
 \ar@<0.4ex>[l]^-{\ph^*}
 &\ \mathbf{Pre} (\sE,\sW) \ar@<0.4ex>[l]^-{\bU}.\\ } 
\end{equation} 
This holds since the right adjoints in (\ref{threead}) and (\ref{sixad}) are easily seen to be the same.

\subsection{Model categorical changes of $\sV$ and $\sD$}\label{VV+sec}

We want a result to the effect that if $(\bT,\bU)$ in (\ref{VW}) is a Quillen equivalence, then {for any weak equivalence $\ph\colon \sD\rtarr \bU\sE$,}
$(\bT_{\ph},\bU_{\ph})$ in (\ref{threead}) is also a Quillen equivalence.  As in \myref{subtle}, we set up a 
general context that will be encountered in the sequel \cite{GM2}; it is a variant of the context of 
\cite[\S6]{SS2}.  We assume that the identity
functor is a left Quillen equivalence $\sV_+\rtarr \sV$ for two model structures on $\sV$ with the 
same weak equivalences, where the unit 
$\mathbf{I}$ is cofibrant in $\sV$ but not necessarily in $\sV_+$.  Similarly, we assume that $\sV$ but not necessarily $\sV_+$ satisfies the monoid axiom. 
We do not assume that $\sW$ satisfies the monoid axiom, but we do assume that all presheaf categories
$\mathbf{Pre} (\sE,\sW)$ are model categories and all weak equivalences $\sE\rtarr \sE'$ in sight are $\sW$-weak equivalences in the sense of \myref{Nuisance}(iv). 

Categorically, the adjunction (\ref{VW}) is independent of model structures.  However, we assume that 
\begin{equation}\label{VWtoo}
 \xymatrix@1{ \sV_+\ar@<0.4ex>[r]^{\bT}  & \sW \ar@<0.4ex>[l]^{\bU} }.
\end{equation}
is a Quillen equivalence in which $\bU$ creates the weak equivalences in $\sV$ and that the unit 
$\et\colon V\rtarr \bU\bT V$ of the adjunction is a weak equivalence for all cofibrant $V$ in $\sV$ (not just
in $\sV_+$).  With the level model structures that we are considering, the right adjoint $\bU_{\ph}$ in the adjunction 
\begin{equation}\label{threeadtoo}
 \xymatrix@1{ \mathbf{Pre} (\sD,\sV)_+\ \ar@<0.4ex>[r]^-{\bT_{\ph}}  
 &\ \mathbf{Pre} (\sE,\sW) \ar@<0.4ex>[l]^-{\bU_{\ph}}\\ } 
\end{equation}
then creates the weak equivalences and fibrations in $\mathbf{Pre} (\sE,\sW)$, so that (\ref{threeadtoo}) is again a Quillen adjunction. With these assumptions, we have the following variant of theorems in \cite{DS,SS2}. 

\begin{thm}\mylabel{Changeflesh} If $(\bT,\bU)$ in (\ref{VWtoo}) is a Quillen equivalence and
$\ph\colon \sD\rtarr \bU\sE$ is a weak equivalence, then $(\bT_{\ph},\bU_{\ph})$ in
(\ref{threeadtoo}) is a Quillen equivalence.
\end{thm}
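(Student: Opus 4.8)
The plan is to read the categorical factorization (\ref{sixad}) with the model structures of (\ref{threeadtoo}),
\[ \mathbf{Pre}(\sD,\sV)_{pos}\xrightarrow{\ph_!}\mathbf{Pre}(\bU\sE,\sV)_{pos}\xrightarrow{\bT}\mathbf{Pre}(\sE,\sW), \]
with right adjoints $\ph^*$ and the objectwise functor $\bU$, and to show that each of the two displayed adjunctions is a Quillen equivalence; since Quillen equivalences compose, this proves the theorem. For the first adjunction I would apply \myref{flesh} with $\sM=\sV_{pos}$: the map $\ph\colon\sD\rtarr\bU\sE$ is the identity on the common object set $\bO$, hence essentially surjective, so $(\ph_!,\ph^*)$ is a Quillen adjunction, and by our standing assumptions the weak equivalence $\ph$ is a $\sV_{pos}$-weak equivalence in the sense of \myref{Nuisance}(iv) (if necessary one first uses \myref{ModCat} and cofibrancy of $\mathbf{I}$ to reduce to $\sV\bO$-categories with cofibrant hom objects), so \myref{flesh} promotes it to a Quillen equivalence. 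As in the proof of that proposition, $\et\colon X\rtarr\ph^*\ph_! X$ is given on $X=F_dM$ by maps of the form $\sD(e,d)\odot M\rtarr\bU\sE(e,d)\odot M$, which are weak equivalences, and $\ph^*$ preserves the colimits used to build cell objects.

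The second adjunction is the crux. The objectwise $\bU$ creates the weak equivalences and fibrations of $\mathbf{Pre}(\sE,\sW)$ by our standing hypotheses, and $\bT$ preserves cofibrations because $\bT F_d\iso F_d\bT$ (both are left adjoint to the evaluation functors, which commute with the objectwise $\bU$) and $\bT\colon\sV_{pos}\rtarr\sW$ is left Quillen; hence $(\bT,\bU)$ is a Quillen adjunction --- and it is here that the passage to the positive structures is essential, since $\bT$ need not carry the cofibrations of the full $\sV$-structure to cofibrations. Because $\bU$ creates weak equivalences, \cite[1.3.16]{Hovey} (compare \myref{Quill2}(ii)) reduces the assertion that $(\bT,\bU)$ is a Quillen equivalence to showing that $\et\colon X\rtarr\bU\bT X$ is a weak equivalence for every cofibrant $X$ in $\mathbf{Pre}(\bU\sE,\sV)_{pos}$. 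A cofibration of the positive structure is a cofibration of $\mathbf{Pre}(\bU\sE,\sV)$, so such an $X$ is a retract of an $F_d\mathcal{I}$-cell complex over the full $\sV$-structure, and the cell-induction machinery of \myref{howto} reduces us to $X=F_dV$ with $V$ a source or target of a map in $\mathcal{I}$, which we may take cofibrant in $\sV$. For such $X$ we have $\bT F_dV\iso F_d\bT V$, so $\et$ is, levelwise at $e\in\bO$, the composite
\[ V\otimes_{\sV}\bU\bigl(\sE(e,d)\bigr)\xrightarrow{\et_V\otimes\id}\bU\bT V\otimes_{\sV}\bU\bigl(\sE(e,d)\bigr)\xrightarrow{m}\bU\bigl(\bT V\otimes_{\sW}\sE(e,d)\bigr), \]
with $m$ the lax monoidal comparison for $\bU$. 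The first arrow is a weak equivalence since $\et_V\colon V\rtarr\bU\bT V$ is one by hypothesis; the second is a weak equivalence because $\bT$ is strong symmetric monoidal, so that $(\bT,\bU)$ is a $\sV$-adjunction as in \myref{mylem} and $m$ is identified, through the isomorphisms of that result, with a map built from the counit $\epz$ of the Quillen equivalence $(\bT,\bU)$.

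The step I expect to be the main obstacle is precisely this levelwise verification: showing that $m\colon\bU A\otimes_{\sV}\bU B\rtarr\bU(A\otimes_{\sW}B)$ is a weak equivalence when $B=\sE(e,d)$ is a hom object over which we have no cofibrancy control. This is the technical heart of \cite[App A]{DS} and \cite[\S6]{SS2}; the way through is to assume, or to arrange via \myref{ModCat} and cofibrancy of $\mathbf{I}$ after a cofibrant approximation of $\sE$ in $\sW\bO$-$\sC\!at$, that the objects $\sE(e,d)$ are cofibrant in $\sW$, or else to invoke the monoid-axiom bookkeeping packaged into our standing hypotheses on $\sV$. With that in hand, the colimit-preservation and gluing-lemma inputs needed to run the cell induction are the routine ones already isolated in \myref{howto}, and assembling these with the first adjunction completes the proof.
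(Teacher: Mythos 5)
Your first step agrees with the paper: factor $(\bT_{\ph},\bU_{\ph})$ as in (\ref{sixad}) and dispose of $(\ph_!,\ph^*)$ by \myref{flesh}, reducing to the case $\ph=\id\colon \bU\sE\rtarr\bU\sE$. The gap is in the second half. You correctly compute that for $X=F_dV$ the unit of (\ref{fourad}) is, at level $e$, the composite of $\et_V\otimes\id$ with the lax monoidal comparison $m\colon \bU\bT V\otimes_{\sV}\bU\sE(e,d)\rtarr \bU(\bT V\otimes_{\sW}\sE(e,d))$, and you flag that making $m$ a weak equivalence is the obstacle --- but nothing in the standing hypotheses resolves it, and your two suggested escapes do not work. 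A cofibrant approximation of $\sE$ in $\sW\bO$-$\sC\!at$ is not licensed: no model structure on $\sW\bO$-$\sC\!at$ is assumed ($\sW$ is not assumed to satisfy the monoid axiom, and \myref{ModCat} is only invoked on the $\sV$ side, where $\mathbf{I}$ is cofibrant), and even if the $\sE(e,d)$ were cofibrant in $\sW$, the statement that $m$ is a weak equivalence is an additional ``weak monoidal Quillen pair'' condition of the kind imposed in \cite{SS2,DS}, not a consequence of $(\bT,\bU)$ being a Quillen equivalence with $\bT$ strong monoidal. Likewise the monoid axiom for $\sV$ says nothing about a comparison map landing in $\sW$. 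So the crux of the theorem is left unproved.

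The paper's route avoids $m$ altogether, and this is the idea you are missing. Take a cofibrant approximation $\ga\colon \bQ\bU\sE\rtarr\bU\sE$ in $\sV\bO$-$\sC\!at$ (\myref{ModCat}); since $\mathbf{I}$ is cofibrant in $\sV$, the hom objects of $\bQ\bU\sE$ are cofibrant, so by hypothesis $\et$ is a weak equivalence on them, and from $\ga=\bU\tilde{\ga}\com\et$ together with the fact that $\bU$ creates weak equivalences one deduces that the adjoint $\tilde{\ga}\colon\bT\bQ\bU\sE\rtarr\sE$ is a weak equivalence of $\sW$-categories. Then \myref{flesh} applies to both restrictions $\ga^*$ and $\tilde{\ga}^*$, and the commutative square of right Quillen functors reduces the problem to the adjunction (\ref{fiveadToo}) $(\bT_{\et},\bU_{\et})$ over a \emph{cofibrant} domain $\sD$ in $\sV\bO$-$\sC\!at$. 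There the extension of scalars along $\et\colon\sD\rtarr\bU\bT\sD$ is just objectwise $\bT$ (by the co-Yoneda collapse $\bT X\otimes_{\bT\sD}\bY\iso\bT X$, using that $\bT$ is strong monoidal), so the unit is levelwise $\et\colon X_d\rtarr\bU\bT X_d$ with no lax comparison map intervening; since the $\sD(d,e)$ are cofibrant, cofibrant presheaves are levelwise cofibrant by \myref{level2}, and the hypothesis that $\et$ is a weak equivalence on cofibrant objects of $\sV$ finishes the proof. (A minor further quibble with your cell-induction sketch: domains of maps in $\mathcal{I}$ need not be cofibrant, so ``which we may take cofibrant in $\sV$'' also needs care; in the paper's argument this is subsumed by the levelwise-cofibrancy statement of \myref{level2}.)
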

\begin{proof} We have a factorization of (\ref{threeadtoo}) as in (\ref{sixad}), and 
$(\ph_!,\ph^*)$ is a Quillen equivalence by \myref{flesh}. Therefore it suffices to 
consider the special case when $\ph=\mbox{id}\colon \bU\sE\rtarr \bU\sE$. 

Let $\ga\colon \bQ{\bU\sE} \rtarr \bU \sE$ 
be a cofibrant approximation in the model structure on $\sV\bO$-$\sC\!at$ of \myref{ModCat}. Since $\mathbf{I}$ is cofibrant in $\sV$, each $\bQ\bU{\sE}(d,e)$ is cofibrant and thus, by assumption, each map 
$\et\colon \bQ\bU{\sE}(d,e)\rtarr \bU\bT \bQ\bU{\sE}(d,e)$ is a weak equivalence.  Let 
$\tilde{\ga}\colon \bT\bQ\bU{\sE}\rtarr \sE$ be the adjoint of $\ga$ obtained from the 
adjunction (\ref{twoad}). Since the weak equivalence $\ga$ is the composite 
\[  \xymatrix@1{ \bQ\bU{\sE} \ar[r]^-{\et} & \bU\bT \bQ\bU{\sE} \ar[r]^-{\bU \tilde{\ga}} & \bU \sE\\} \]
and $\et$ is a weak equivalence, $\bU \tilde{\ga}$ is a weak equivalence by the two out of three
property. Since $\bU$ creates the weak equivalences, $\tilde{\ga}$ is a weak equivalence.

The identity $\bU\tilde{\ga}\com \et = \ga$ leads to a commutative square of right Quillen adjoints
\[ \xymatrix{
\mathbf{Pre} (\sE,\sW) \ar[d]_{\bU} \ar[r]^{\tilde{\ga}^*} & \mathbf{Pre} (\bT\bQ\bU{\sE},\sW) \ar[d]^{\bU_{\et}} \\
\mathbf{Pre} (\bU\sE),\sV_+) \ar[r]_-{\ga^*} & \mathbf{Pre} (\bQ\bU{\sE},\sV_+).\\} \]
By \myref{flesh}, the horizontal arrows are the right adjoints of Quillen 
equivalences. Therefore it suffices to prove that the right vertical arrow is the right adjoint of a 
Quillen equivalence.

To see this, start more generally with a cofibrant object $\sD$ in $\sV\bO$-$\sC\!at$ and consider the
Quillen adjunction 
\begin{equation}\label{fiveadToo}
 \xymatrix@1{ \mathbf{Pre} (\sD,\sV_+)\ \ar@<0.4ex>[r]^-{\bT_{\et}}  
 &\ \mathbf{Pre} (\bT \sD,\sW) \ar@<0.4ex>[l]^-{\bU_{\et}} \\}
 \end{equation}
It suffices to prove that the unit $X\rtarr \bU_{\et}\bT_{\et} X$ 
is a weak equivalence for any cofibrant $X$ in $\mathbf{Pre} (\sD,\sV_+)$. Since $X$ is also cofibrant 
in $\mathbf{Pre} (\sD,\sV)$ and each $\sD(d,e)$ is cofibrant in $\sV$, each $X_d$ is cofibrant in $\sV$ by \myref{level2}. 
Our assumption that $\et\colon V\rtarr \bU\bT V$ is a weak equivalence for all cofibrant $V$ gives the
conclusion.
\end{proof}

\subsection{Tensored adjoint pairs and changes of $\sV$, $\sD$, and $\sM$}\label{tensored}
We are interested in model categories that have approximations as presheaf categories, so we naturally want to 
consider situations where, in addition to the adjunction (\ref{VW}) between $\sV$ and $\sW$, we have a 
$\sV$-category $\sM$, a $\sW$-category $\sN$, and an adjunction
\begin{equation}\label{THREEad}
 \xymatrix@1{ \sM\ar@<0.4ex>[r]^{\bJ}  & \sN \ar@<0.4ex>[l]^{\bK}}\\ 
\end{equation}
that is suitably compatible with (\ref{VW}).  In view of our standing assumption that $\bT$ is strong
symmetric monoidal and therefore $\bU$ is lax symmetric monoidal, the following definition seems reasonable. 
It covers the situations of most interest to us, but the notion of ``adjoint module'' introduced by Dugger 
and Shipley \cite[\S\S3,4]{DS} gives the appropriate generalization in which it is only assumed that $\bU$ 
is lax symmetric monoidal. Recall the isomorphisms of (\ref{trans}).

\begin{defn}\mylabel{tenadj} The adjunction $(\bJ,\bK)$ is tensored over the adjunction $(\bT,\bU)$ if there is a
natural isomorphism 
\begin{equation}\label{adjten} \bJ X\odot \bT V \iso \bJ (X\odot V) 
\end{equation}
such that the following coherence diagrams of isomorphisms commute for $X\in\sM$ and $V,V'\in \sV$.
\[ \xymatrix{
\bJ X \ar[d] \ar[r] & \bJ (X\odot \mathbf{I}_{\sV}) \\
\bJ X \odot \mathbf{I}_{\sW}  \ar[r] &   \bJ X \odot \bT \mathbf{I}_{\sV} \ar[u].\\} \]
\[ \xymatrix{
(\bJ X\odot \bT V)\odot \bT V' \ar[r] \ar[d] & \bJ(X\odot V)\odot \bT V' \ar[r] & \bJ((X\odot V)\odot V') \ar[d]\\
\bJ X \odot (\bT V\otimes \bT V') \ar[r] & \bJ X\odot \bT(V\otimes V') \ar[r] & \bJ(X\odot (V\otimes V')).\\} \]
\end{defn}

The definition implies an enriched version of the adjunction $(\bJ,\bK)$. 

\begin{lem} If $(\bJ,\bK)$ is tensored over $(\bT,\bU)$, then there is a natural isomorphism
\[ \bU\ul{\sN}(\bJ X, Y)\iso \ul{\sM}(X,\bK Y) \]
in $\sV$, where $X\in \sM$ and $Y\in \sN$.
\end{lem}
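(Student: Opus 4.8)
The plan is to establish the isomorphism by a Yoneda argument carried out internally to $\sV$. Since a natural isomorphism $\sV(V,A)\iso\sV(V,B)$ of functors of $V\in\sV^{\mathrm{op}}$ forces $A\iso B$, it suffices to produce such a natural bijection with $A=\bU\ul{\sN}(\bJ X,Y)$ and $B=\ul{\sM}(X,\bK Y)$, natural also in $X$ and $Y$, and then descend.

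First I would unwind the left-hand side using the adjunction $(\bT,\bU)$ of (\ref{VW}): $\sV(V,\bU\ul{\sN}(\bJ X,Y))\iso\sW(\bT V,\ul{\sN}(\bJ X,Y))$. The tensor--cotensor adjunction (\ref{biten}) in the $\sW$-category $\sN$ rewrites this as $\sN(\bJ X\odot\bT V,Y)$, and the defining isomorphism (\ref{adjten}) of a tensored adjoint pair, $\bJ X\odot\bT V\iso\bJ (X\odot V)$, rewrites it further as $\sN(\bJ (X\odot V),Y)$. Now the ordinary adjunction $(\bJ,\bK)$ of (\ref{THREEad}) gives $\sM(X\odot V,\bK Y)$, and the tensor--cotensor adjunction (\ref{biten}) in the $\sV$-category $\sM$ gives $\sV(V,\ul{\sM}(X,\bK Y))$. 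Concatenating these five isomorphisms produces the desired natural bijection; each step is natural in $V$, in $X$ (for (\ref{adjten}) this naturality is part of \myref{tenadj}), and in $Y$, so the composite is too. The Yoneda lemma then yields the asserted natural isomorphism in $\sV$.

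There is essentially no obstacle beyond bookkeeping: the only points to watch are that (\ref{adjten}) is natural in $X$, so that naturality in $X$ survives the chain, and that the conclusion is an isomorphism of objects of $\sV$ rather than a mere bijection of hom-sets --- which is automatic because the whole argument is conducted with $\sV(V,-)$ for all $V$. Note that the two coherence diagrams of \myref{tenadj} are not needed for this bare statement; they would enter only if one wanted to promote the lemma to the assertion that $(\bJ,\bK)$ is an enriched adjunction compatible with $(\bT,\bU)$ in the sense of (\ref{Vadj}).
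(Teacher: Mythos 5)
Your chain of isomorphisms is exactly the argument the paper gives: apply $(\bT,\bU)$, the tensor adjunction in $\sN$, the isomorphism (\ref{adjten}), the adjunction $(\bJ,\bK)$, and the tensor adjunction in $\sM$, then conclude by the Yoneda lemma. The proof is correct and essentially identical to the paper's, including the observation that only the isomorphism (\ref{adjten}) and its naturality, not the coherence diagrams of \myref{tenadj}, are needed.
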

\begin{proof} For $V\in \sV$, we have the sequence of natural isomorphisms
\begin{eqnarray*}
\sV(V,\bU\ul{\sN}(\bJ X,Y) & \iso & \sW(\bT V,\ul{\sN}(\bJ X,Y)) \\
&\iso & \sN(\bJ X\odot \bT V,Y)\\
&\iso & \sN(\bJ(X\odot V), Y) \\
& \iso & \sM(X \odot V, \bK Y)\\
& \iso & \sV(V, \ul{\sM}(X,\bK Y). 
\end{eqnarray*}
The conclusion follows from the Yoneda lemma.
\end{proof}

We are interested in comparing presheaf categories $\mathbf{Pre} (\sD,\sV)$ and $\mathbf{Pre} (\sE,\sW)$ where 
$\sD$ and $\sE$ are full categories of bifibrant objects that correspond under a Quillen 
equivalence between $\sM$ and $\sN$. In the context of \S\ref{VV+sec}, we can change
$\sV$ to $\sV_+$. The following results then combine with \myref{subtle} and \myref{Changeflesh} 
to give such a comparison. 

\begin{thm}\mylabel{tensorthm}  Let $(\bJ,\bK)$ be tensored over $(\bT,\bU)$, where $(\bJ,\bK)$ is a Quillen 
equivalence. Let $\sE$ be a small full $\sW$-subcategory of bifibrant objects of $\sN$. Then 
$\bU\sE$ is quasi-equivalent to the small full $\sV$-subcategory $\sD$ of $\sM$ with bifibrant
objects the $\bQ\bK Y$ for $Y\in\sE$, where $\bQ$ is a cofibrant approximation functor
in $\sM$. 
\end{thm}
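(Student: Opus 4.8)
The plan is to produce directly a quasi-equivalence in the sense of \myref{QEquiv} between $\sD$ and $\bU\sE$, parallel to the proof of \myref{CutesyToo} but now dragging the change of enriching category $(\bT,\bU)$ along. Index the common object set $\bO$ by the objects of $\sE$: the object of $\bU\sE$ attached to $Y\in\sE$ is $Y$ itself, and the object of $\sD$ attached to $Y$ is $\bQ\bK Y$. (Here $\bU\sE$ is, as usual, the $\sV$-category obtained by applying $\bU$ to the hom objects of $\sE$, equivalently the full $\sV$-subcategory on $\mathrm{ob}\,\sE$ of the $\sV$-category $\bU\sN$ of \myref{mylem}.) Each $\bQ\bK Y$ is bifibrant: it is cofibrant since $\bQ$ is a cofibrant approximation functor, and it is fibrant because the acyclic fibration $q_Y\colon\bQ\bK Y\rtarr\bK Y$ has fibrant target ($\bK$ is right Quillen and $Y$ is fibrant). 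I would also record at the outset that the lemma preceding the theorem, specialized to $X=\bK Y_1$ and combined with the counit $\bJ\bK Y_1\rtarr Y_1$, upgrades $\bK$ to a $\sV$-functor $\bU\sN\rtarr\sM$, exactly as in \myref{mylem}.

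Next I would define the bridging bimodule. Set
\[ \sF(Y,Y')\;=\;\ul{\sM}(\bQ\bK Y,\bK Y')\;\iso\;\bU\ul{\sN}(\bJ\bQ\bK Y,Y'), \]
the isomorphism being the case $X=\bQ\bK Y$ of the preceding lemma. Using the first description, the right $\sD$-action on $\sF$ is composition in $\ul{\sM}$ in the first variable, and the left $\bU\sE$-action is the $\sV$-functor $\bK$ followed by composition in $\ul{\sM}$ in the second variable; the two actions commute by associativity of composition in $\ul{\sM}$, so $\sF$ is a $(\bU\sE,\sD)$-bimodule. For the structure maps, take $\ze_Y\colon\mathbf{I}\rtarr\sF(Y,Y)=\ul{\sM}(\bQ\bK Y,\bK Y)$ to be the map named by the cofibrant approximation $q_Y$, a weak equivalence in $\sM$. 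Its adjoint across the Quillen equivalence $(\bJ,\bK)$ is a map $\tilde q_Y\colon\bJ\bQ\bK Y\rtarr Y$, and since $\bQ\bK Y$ is cofibrant and $Y$ is fibrant, $\tilde q_Y$ is again a weak equivalence (this is the defining property of a Quillen equivalence; cf.\ \cite[1.3.16]{Hovey}).

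It then remains to check the two weak-equivalence conditions of \myref{QEquiv}. The map $(\ze_{Y'})_*\colon\sD(Y,Y')\rtarr\sF(Y,Y')$ is postcomposition with $q_{Y'}$, namely $(q_{Y'})_*\colon\ul{\sM}(\bQ\bK Y,\bQ\bK Y')\rtarr\ul{\sM}(\bQ\bK Y,\bK Y')$, which is a weak equivalence by \myref{Cutesy}: $\bQ\bK Y$ is cofibrant, $\bQ\bK Y'$ and $\bK Y'$ are fibrant, and $q_{Y'}$ is a weak equivalence. For $(\ze_Y)^*\colon\bU\sE(Y,Y')=\bU\ul{\sN}(Y,Y')\rtarr\sF(Y,Y')$, I would pass to the description $\sF(Y,Y')\iso\bU\ul{\sN}(\bJ\bQ\bK Y,Y')$; under this isomorphism $(\ze_Y)^*$ is $\bU$ applied to precomposition with $\tilde q_Y$, i.e.\ to $(\tilde q_Y)^*\colon\ul{\sN}(Y,Y')\rtarr\ul{\sN}(\bJ\bQ\bK Y,Y')$. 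Now \myref{Cutesy} applies inside the $\sW$-model category $\sN$: the key point is that $Y$ is \emph{bifibrant}, hence cofibrant, while $\bJ\bQ\bK Y$ is cofibrant and $Y'$ is fibrant, so $(\tilde q_Y)^*$ is a weak equivalence in $\sW$; both its source and target are enriched homs of a cofibrant object into a fibrant object, hence fibrant in $\sW$, so the right Quillen functor $\bU$ sends $(\tilde q_Y)^*$ to a weak equivalence in $\sV$. This yields both conditions, so $(\sF,\ze)$ is the desired quasi-equivalence.

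The one substantive point — and the reason a bimodule, rather than a genuine zigzag of enriched weak equivalences, is needed — is that neither of the natural maps $\bU\ul{\sN}(Y,Y')\rtarr\ul{\sM}(\bK Y,\bK Y')$ (induced by $\bK$) and $\ul{\sM}(\bK Y,\bK Y')\rtarr\ul{\sM}(\bQ\bK Y,\bK Y')$ (induced by $q_Y$) is individually a weak equivalence, since $\bK Y$ need not be cofibrant; only their composite $(\ze_Y)^*$ is, and seeing this forces the passage to the adjoint object $\bJ\bQ\bK Y$ so as to exploit the bifibrancy of $Y\in\sE$ together with the adjunction identity for the Quillen equivalence. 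Everything else is routine bookkeeping with the bimodule actions and the conventions of \myref{QEquiv}.
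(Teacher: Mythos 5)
Your proposal is correct and follows essentially the same route as the paper's own proof: the same bimodule $\sF(Y,Y')=\ul{\sM}(\bQ\bK Y,\bK Y')$, the same unit maps $\ze_Y$ induced by the cofibrant approximations, the identification of $(\ze_Y)^*$ with (the image under $\bU$ of) precomposition by the weak equivalence $\bJ\bQ\bK Y\rtarr\bJ\bK Y\rtarr Y$ supplied by the Quillen equivalence, and two applications of \myref{Cutesy}. The only difference is that you make explicit the step the paper leaves implicit, namely why applying $\bU$ preserves the resulting weak equivalence (Ken Brown's lemma plus fibrancy of the enriched hom objects, using that $\bU$ is right Quillen in the ambient context of \S\ref{VV+sec}), which is a harmless and indeed welcome clarification.
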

\begin{proof}  We define a $(\bU\sE,\sD)$-bimodule $\sF$.  Let $X,Y,Z\in \sE$. Define 
\[ \sF(X,Y) = \sM(\bQ\bK X,\bK Y).\]  
The right action of $\sD$ is given by composition
\[  \sM(\bQ\bK Y,\bK Z) \otimes \sM(\bQ\bK X,\bQ\bK Y) \rtarr \sM(\bQ\bK X,\bK Z). \]
The counit $\bJ\bK\rtarr \Id$ of the adjunction gives a natural map 
\[  \bU\ul{\sN}(X,Y)\rtarr \bU\ul{\sN}(\bJ\bK X,Y)\iso \ul{\sM}(\bK X,\bK Y). \]
The left action of $\bU\sE$ is given by the composite
\[ \bU\ul{\sN}(Y,Z)\otimes \ul{\sM}(\bQ\bK X, \bK Y) \rtarr  \ul{\sM}(\bK Y, \bK Z)\otimes \ul{\sM}(\bQ\bK X, \bK Y)
\rtarr \ul{\sM}(\bQ\bK X, \bK Z). \]
Using the coherence diagrams in \myref{tenadj}, a lengthy but routine check shows that the diagrams that are required
to commute in \S \ref{QEchangesec} do in fact commute. Define $\ze_X\colon \bf{I}\rtarr \sF(X,X)$
to be the composite
\[ \bf{I} \rtarr \ul{\sM}(\bQ\bK X, \bQ\bK X) \rtarr \ul{\sM}(\bQ\bK X, \bK X) \]
induced by the weak equivalence $\bQ\bK X\rtarr \bK X$. 
By the naturality square
\[\xymatrix{
 \bU \ul{\sN}(\bJ\bK X,Y)\ar[r]^{\iso}  \ar[d] & \ul{\sM}(\bK X, \bK Y) \ar[d] \\  
 \bU \ul{\sN}(\bJ\bQ\bK X, Y) \ar[r]_{\iso}    & \ul{\sM}(\bQ\bK X, \bK Y) \\} \]
the map
\[ (\ze_X)^*\colon \bU \ul{\sN}(X,Y)\rtarr \ul{\sM}(\bQ\bK X, \bK Y)  \]
is the composite
\[ \bU \ul{\sN}(X,Y) \rtarr \bU \ul{\sN}(\bJ\bK X,Y) \rtarr \bU\ul{\sN}(\bJ\bQ\bK X,Y) 
\iso \ul{\sM}(\bQ\bK X, \bK Y). \]
Since $(\bJ,\bK)$ is a Quillen equivalence, the composite $\bJ\bQ\bK X\rtarr \bJ\bK X\rtarr X$
is a weak equivalence, hence $(\ze_X)^*$ is a weak equivalence by \myref{Cutesy}.
The map 
\[ (\ze_Y)_*\colon \ul{\sM}(\bQ\bK X,\bQ\bK Y) \rtarr \ul{\sM}(\bQ\bK X,\bK Y) \]
is also a weak equivalence by \myref{Cutesy}. 
\end{proof}

\begin{cor}\mylabel{pickypick} With the hypotheses of \myref{tensorthm}, let $\sD$ be a small full
$\sV$-subcategory of bifibrant objects of $\sM$. Then $\sD$ is quasi-equivalent 
to $\bU\sE$, where $\sE$ is the small full $\sW$-subcategory of $\sN$ with bifibrant 
objects the $\bR\bJ X$ for $X\in \sD$, where $\bR$ is a fibrant approximation functor
in $\sN$.
\end{cor}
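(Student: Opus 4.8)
The plan is to exhibit an explicit quasi-equivalence between $\sD$ and $\bU\sE$ by running the construction in the proof of \myref{tensorthm}, now fed with the subcategory $\sE = \{\bR\bJ X : X\in\sD\}$, but with the functorial cofibrant approximation $\bQ\bK(-)$ systematically replaced by the objects of $\sD$ themselves. (One could instead try to combine \myref{tensorthm} with \myref{CutesyTwee}, but that would require transitivity of the quasi-equivalence relation, which we have not established when $\mathbf{I}$ is not cofibrant, so a direct construction is cleaner.)

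First I would check that $\sE$ is a legitimate choice: for $X\in\sD$ the object $\bJ X$ is cofibrant since $\bJ$ is a left Quillen functor and $X$ is bifibrant, and since $\bR$ is a fibrant approximation functor (by acyclic cofibrations), $\bR\bJ X$ is then bifibrant in $\sN$; so $\sE$ is a small full $\sW$-subcategory of bifibrant objects, $\bU\sE$ is defined, and we take its object set to be that of $\sD$ via $X\leftrightarrow\bR\bJ X$. Next I would record that for each $X$ the object $X$ is \emph{itself} a cofibrant approximation of $\bK\bR\bJ X$: the derived unit $r_X\colon X\rtarr\bK\bR\bJ X$ of the Quillen equivalence $(\bJ,\bK)$ — the adjoint of the fibrant approximation map $\tilde r_X\colon\bJ X\rtarr\bR\bJ X$ — is a weak equivalence because $X$ is cofibrant, and $X$ is bifibrant. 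Then I would form, exactly as in \myref{tensorthm}, the $(\bU\sE,\sD)$-bimodule
\[ \sF(X,X') = \ul{\sM}(X,\bK\bR\bJ X') \iso \bU\ul{\sN}(\bJ X,\bR\bJ X'), \]
with right $\sD$-action given by composition in $\ul{\sM}$ on the source, and left $\bU\sE$-action induced by the natural maps $\bU\ul{\sN}(P,Q)\rtarr\ul{\sM}(\bK P,\bK Q)$ (counit of $(\bJ,\bK)$ together with the enriched adjunction $\bU\ul{\sN}(\bJ X,Y)\iso\ul{\sM}(X,\bK Y)$ established after \myref{tenadj}) followed by composition on the target; I would take $\ze_X\colon\mathbf{I}\rtarr\sF(X,X)=\ul{\sM}(X,\bK\bR\bJ X)$ to represent $r_X$. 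The coherence diagrams of \S\ref{QEchangesec} then commute by the same routine verification as in \myref{tensorthm}.

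It remains to check the two weak-equivalence conditions of \myref{QEquiv}. For $(\ze_{X'})_*\colon\sD(X,X')=\ul{\sM}(X,X')\rtarr\sF(X,X')$ this is $(r_{X'})_*$, a weak equivalence by \myref{Cutesy} since $X$ is cofibrant and $X'$, $\bK\bR\bJ X'$ are fibrant. For $(\ze_X)^*\colon\bU\sE(X,X')=\bU\ul{\sN}(\bR\bJ X,\bR\bJ X')\rtarr\sF(X,X')=\bU\ul{\sN}(\bJ X,\bR\bJ X')$, a naturality square as in the proof of \myref{tensorthm} identifies it with $\bU$ applied to $(\tilde r_X)^*\colon\ul{\sN}(\bR\bJ X,\bR\bJ X')\rtarr\ul{\sN}(\bJ X,\bR\bJ X')$; by \myref{Cutesy}, applied in $\sN$, this is a weak equivalence between fibrant objects of $\sW$ (the objects $\bJ X$ and $\bR\bJ X$ being cofibrant and $\bR\bJ X'$ fibrant), and $\bU$, as a right Quillen functor, preserves weak equivalences between fibrant objects. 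Hence $(\sF,\ze)$ is a quasi-equivalence between $\sD$ and $\bU\sE$.

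As in \myref{tensorthm}, the bulk of the argument is the bimodule coherence check, which is lengthy but mechanical; the one point that genuinely needs care is keeping track, at each use of \myref{Cutesy} and of the preservation of weak equivalences between fibrant objects by $\bU$, that the requisite cofibrancy and fibrancy hold — in particular that $\bR\bJ X$ is honestly bifibrant (which is why $\bR$ is taken to be a fibrant approximation by acyclic cofibrations) and that the enriched hom-objects in sight are fibrant in $\sW$.
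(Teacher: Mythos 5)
Your proof is correct, but it takes a genuinely different route from the paper. The paper's own argument is a two-line chaining: it applies \myref{tensorthm} to $\sE=\{\bR\bJ X\}$ to get a quasi-equivalence between $\bU\sE$ and the full $\sV$-subcategory $\sD'$ of $\sM$ on the objects $\bQ\bK\bR\bJ X$, observes that $\bQ\bK\bR\bJ X$ is weakly equivalent to $X$, and invokes \myref{CutesyTwee} to relate $\sD'$ to $\sD$ --- thus producing a chain of two quasi-equivalences, and implicitly treating quasi-equivalence as composable (or, in the intended applications, only using the conclusion through the zigzags of weak equivalences supplied by \myref{QuasiEquiv}, which do concatenate). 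You instead rerun the construction of \myref{tensorthm} directly, using that each $X\in\sD$ is itself a bifibrant cofibrant approximation of $\bK\bR\bJ X$ via the derived unit $r_X$, and take $\sF(X,X')=\ul{\sM}(X,\bK\bR\bJ X')$ with $\ze_X$ representing $r_X$; your identification of $(\ze_X)^*$ with $\bU$ of $(\tilde r_X)^*$ via the naturality square, and your applications of \myref{Cutesy} (together with the fact that $\bU$ preserves weak equivalences between fibrant objects, which is harmless here since the ambient assumptions of \S\ref{VV+sec} even have $\bU$ creating weak equivalences, exactly as the proof of \myref{tensorthm} tacitly uses), are all sound, and the coherence check is the same routine verification as in \myref{tensorthm}. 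What your approach buys is a single explicit quasi-equivalence between $\sD$ and $\bU\sE$, avoiding any appeal to transitivity of the relation and avoiding the extra cofibrant-approximation objects $\bQ\bK\bR\bJ X$; what the paper's approach buys is brevity, reusing \myref{tensorthm} and \myref{CutesyTwee} verbatim instead of repeating the bimodule construction.
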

\begin{proof}  By \myref{tensorthm}, $\bU\sE$ is quasi-equivalent to $\sD'$, where $\sD'$
is the full $\sV$-subcategory of $\sM$ with objects the $\bQ\bK\bR\bJ X$, and of 
course  $\bQ\bK\bR\bJ X$ is weakly equivalent to $X$. The conclusion follows from
\myref{CutesyTwee}.
\end{proof}

\subsection{Weakly unital $\sV$-categories and presheaves}\label{weakly}

In the sequel \cite{GM2}, we shall encounter a topologically motivated variant 
of presheaf categories.  Despite the results of the previous section, which show how to 
compare full enriched subcategories $\sD$ of categories $\sM$ with differing enriching categories
$\sV$, 
the choice of $\sV$ can significantly affect the mathematics when seeking simplified equivalents of full subcategories of $\sV$-categories $\sM$.  We often want the objects of $\sD$
to be cofibrant but, when $\mathbf{I}$ is not cofibrant in $\sV$, that desideratum can conflict with the requirement
that $\sD$ have strict units given by maps $\mathbf{I}\rtarr \sD(d,d)$ in $\sV$.  We shall encounter domains $\sD$ for
presheaf categories in which $\sD$ is not quite a category since a chosen cofibrant approximation
$\bQ\mathbf{I}$ rather than $\mathbf{I}$ itself demands to be treated as if it were a unit object. 
The examples start with a given fixed $\sM$ but are not full $\sV$-subcategories of $\sM$.  Retaining our 
standing assumptions on $\sV$, we conceptualize the situation with the following definitions.  We 
fix a weak equivalence $\ga\colon \bQ\mathbf{I}\rtarr \mathbf{I}$, not necessarily a fibration.  

\begin{defn} Fix a $\sV$-model category $\sM$ and a set $\bO = \{d\}$ of objects of $\sM$.  
A weakly unital $\sV$-category $\sD$ with object set $\bO$ consists of objects $\sD(d,e)$ of 
$\sV$ for $d,e\in \bO$, an associative pairing $\sD(d,e)\otimes \sD(c,d)\rtarr \sD(c,e)$, and, 
for each $d\in \bO$, a map $\et_d\colon \bQ\mathbf{I}\rtarr \sD(d,d)$ and a weak equivalence 
$\xi_d\colon d\rtarr d$ in $\sM$ that induces weak equivalences 
$$\xi_d^*\colon \sD(d,e) \rtarr \sD(d,e) \ \ \ \text{and} \ \ \ {\xi_d}_*\colon \sD(c,d)\rtarr \sD(c,d) $$
for all $c,e\in \bO$. The following unit diagrams must commute.
\[ \xymatrix{
\sD(d,e)\otimes \bQ\mathbf{I} \ar[r]^-{\mbox{id}\otimes \et_d} \ar[d]_{\xi_d^*\otimes \ga} 
& \sD(d,e)\otimes \sD(d,d)  \ar[d]^{\com}\\
\sD(d,e)\otimes \mathbf{I} \ar[r]_-{\iso} & \sD(d,e).\\}
\ \ \mbox{and} \ \ 
\xymatrix{
\bQ\mathbf{I}\otimes \sD(c,d)\ar[r]^-{\et_d\otimes\mbox{id}} \ar[d]_{\ga\otimes {\xi_d}_*} &
\sD(d,d)\otimes \sD(c,d) \ar[d]^{\com}\\
\mathbf{I}\otimes \sD(c,d) \ar[r]_{\iso} & \sD(c,d).\\} \]
A weakly unital $\sD$-presheaf is a $\sV$-functor $X\colon \sD^{\text{op}}\rtarr \sV$ defined as usual, 
except that the unital property requires commutativity of the following diagrams for $d\in \bO$.
\[ \xymatrix{
\bQ\mathbf{I} \ar[r]^-{\et_d} \ar[d]_{\ga}& \sD(d,d) \ar[d]^-{X}  \\
\mathbf{I} \ar[r]_-{\xi_d^*} & \ul{\sV}(X_d,X_d).\\} \]
Here the bottom arrow is adjoint to the map  $X(\xi_d)\colon X_d\rtarr X_d$.
We write $\mathbf{Pre} (\sD,\sV)$ for the category of weakly unital presheaves. The morphisms are the $\sV$-natural
transformations, the definition of which requires no change.
\end{defn}

\begin{rem} A $\sV$-category $\sD$ may be viewed as a weakly unital $\sV$-category $\sD'$ 
by taking $\et_d = \et\com\ga$, where $\et\colon \mathbf{I}\rtarr \sD(d,d)$
is the given unit, and taking $\xi_d = \id$. Then any $\sD$-presheaf can be viewed as a 
$\sD{'}$-presheaf.  In principle, $\sD{'}$-presheaves are slightly more general, since it is possible
for the last diagram to commute even though the composites 
\[ \xymatrix@1{ \mathbf{I}\ar[r]^-{\et} & \sD(d,d) \ar[r]^-{X} & \ul\sV(X_d,X_d)\\} \]
are not the canonical unit maps $\et$. However, this cannot happen if $\ga$ is an epimorphism in $\sV$,
in which case the categories $\mathbf{Pre} (\sD,\sV)$ and $\mathbf{Pre} (\sD',\sV)$ are identical.
\end{rem}

Virtually everything that we have proven when $\mathbf{I}$ is not cofibrant applies with minor changes 
to weakly unital presheaf categories.

\section{Appendix: Enriched model categories}\label{enriched}

\subsection{Remarks on enriched categories}\label{EnHyp}

The assumption that the symmetric monoidal category $\sV$ is closed ensures that we have an adjunction
\begin{equation}\label{inthom}
\sV(V \otimes W, Z) \iso \sV(V,\ul{\sV}(W,Z))
\end{equation}
of set-valued functors and also a $\sV$-adjunction
\begin{equation}\label{inthomtoo}
\ul{\sV}(V \otimes W, Z) \iso \ul{\sV}(V,\ul{\sV}(W,Z))
\end{equation}
of $\sV$-valued functors. 

In general, a $\sV$-adjunction 
\[ \xymatrix@1{ \sN\ar@<0.4ex>[r]^{\bT}  & \sM \ar@<0.4ex>[l]^{\bU} }\\ \]
between $\sV$-functors 
$\bT$ and $\bU$ is given by a binatural isomorphism 
\begin{equation}\label{Vadj}
\ul{\sM}(\bT N, M)\iso \ul{\sN}(N,\bU M)
\end{equation}
in $\sV$.  Applying $\sV(\mathbf{I},-)$, it induces an adjunction 
$\sM(\bT N, M)\iso \sN(N,\bU M)$ on underlying categories.
One characterization is that a $\sV$-functor $\bT$ has a right
$\sV$-adjoint if and only if $\bT$ preserves tensors (see below) and its 
underlying functor has a right adjoint in the usual set-based
sense \cite[II.6.7.6]{BorII}.  The dual characterization holds for
the existence of a left adjoint to $\bU$. We gave a generalization of the notion
of an enriched adjunction that allows for a change of $\sV$ in \S\ref{tensored}.

The assumption that $\sM$ is bicomplete means that $\sM$ has all weighted limits and
colimits \cite{Kelly}.  Equivalently, $\sM$ is bicomplete in the usual
set-based sense, and $\sM$ has tensors $M\odot V$ and cotensors $F(V,M)$. 

\begin{rem} These notations are not standard.
The standard notation for $\odot$ is $\otimes$, with obvious ambiguity. 
The usual notation for $F(V,M)$ is $[V,M]$ or $M^V$, neither of which seems 
entirely standard or entirely satisfactory.
\end{rem}

The $\sV$-product $\otimes$ between $\sV$-categories $\sM$ and $\sN$
has objects the pairs of objects $(M,N)$ and has hom objects in $\sV$ 
\[ \ul{\sM\otimes \sN}((M,N),(M',N')) = \ul{\sM}(M,M')\otimes \ul{\sN}(N,N'), \]
with units and composition induced in the evident way from those of $\sM$ and $\sN$. 
By definition, tensors and cotensors are given by $\sV$-bifunctors
\[ \odot\colon \sM\otimes \sV\rtarr \sM \ \ \ \text{and} \ \ \  
F\colon \sV^{\text{op}}\otimes \sM\rtarr \sM\]
that take part in $\sV$-adjunctions
\begin{equation}\label{biten} 
\ul{\sM}(M\odot V, N)\iso \ul{\sV}(V,\ul{\sM}(M,N))\iso \ul{\sM}(M,F(V,N)).
\end{equation}

We often write tensors as $V\odot M$ instead of $M\odot V$.  In principle, 
since tensors are defined  by a universal property and are therefore only defined 
up to isomorphism, there is no logical preference. However, in practice, we usually 
have explicit canonical constructions which differ 
by an interchange isomorphism. When $\sM=\sV$, we have the tensors and cotensors
\[ V\odot W = V\otimes W \ \ \ \text{and}\ \ \ F(V,W)=\ul{\sV}(V,W).  \]  

While (\ref{biten}) is the correct categorical definition \cite{BorII, Kelly}, one 
sometimes sees the definition given in the unenriched sense of ordinary adjunctions
\begin{equation}\label{biten2} 
{\sM}(M\odot V, N)\iso {\sV}(V,\ul{\sM}(M,N))\iso {\sM}(M,F(V,N)).
\end{equation}
These follow by applying the functor $\sV(I,-)$
to the adjunctions in (\ref{biten}).  There is a partial converse to this implication.
It is surely known, but we have not seen it in the literature.

\begin{lem} Assume that we have the first of the ordinary adjunctions (\ref{biten2}).
Then we have the first of the enriched adjunctions (\ref{biten}) if and only if we
have a natural isomorphism
\begin{equation}\label{trans}
(M\odot V)\odot W \iso M\odot(V\otimes W).
\end{equation}
Dually, assume that we have the second of the ordinary adjunctions (\ref{biten2}).
Then we have the second of the enriched adjunctions (\ref{biten}) if and only if we
have a natural isomorphism
\begin{equation}\label{trans2}
F(V,F(W,M)) \iso F(V\otimes W,M). 
\end{equation}
\end{lem}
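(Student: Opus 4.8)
The plan is to prove the first equivalence and deduce the second from it by duality: $\sM^{\mathrm{op}}$ is again a $\sV$-category, the cotensor $F(V,-)$ of $\sM$ is the tensor by $V$ in $\sM^{\mathrm{op}}$, the second ordinary adjunction of (\ref{biten2}) for $\sM$ becomes the first ordinary adjunction of (\ref{biten2}) for $\sM^{\mathrm{op}}$, the second enriched adjunction of (\ref{biten}) for $\sM$ becomes the first for $\sM^{\mathrm{op}}$, and (\ref{trans2}) becomes an instance of (\ref{trans}) (using the symmetry of $\sV$ to reorder $V\otimes W$); so nothing new needs to be checked for the dual.

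For the forward implication, assume the first enriched adjunction of (\ref{biten}). I would iterate it together with the enriched internal-hom adjunction (\ref{inthomtoo}) and the symmetry of $\sV$ to produce $\sV$-natural isomorphisms
\begin{align*}
\ul{\sM}((M\odot V)\odot W,N) &\iso \ul{\sV}(W,\ul{\sM}(M\odot V,N))
\iso \ul{\sV}(W,\ul{\sV}(V,\ul{\sM}(M,N)))\\
&\iso \ul{\sV}(V\otimes W,\ul{\sM}(M,N))
\iso \ul{\sM}(M\odot(V\otimes W),N),
\end{align*}
natural in $N$ (indeed in $M$, $V$, $W$ as well). Applying $\sV(\mathbf{I},-)$ and the ordinary Yoneda lemma in the underlying category of $\sM$ then yields the natural isomorphism (\ref{trans}).

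For the converse, assume the first ordinary adjunction of (\ref{biten2}) and the isomorphism (\ref{trans}). Fixing $M\in\sM$ and $V\in\sV$, I would first write down a canonical $\sV$-natural comparison map $\et_N\colon \ul{\sM}(M\odot V,N)\rtarr \ul{\sV}(V,\ul{\sM}(M,N))$, namely the adjoint under (\ref{inthomtoo}) of the composite
\[ \ul{\sM}(M\odot V,N)\otimes V \xrightarrow{\;\id\otimes\kappa\;} \ul{\sM}(M\odot V,N)\otimes \ul{\sM}(M,M\odot V) \xrightarrow{\;\com\;} \ul{\sM}(M,N), \]
where $\kappa\colon V\rtarr \ul{\sM}(M,M\odot V)$ is the map corresponding to $\id_{M\odot V}$ under (\ref{biten2}) and $\com$ is composition in $\sM$; equivalently, $\et$ is the map provided by the enriched Yoneda lemma applied to $\id_{M\odot V}$. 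I would then show each $\et_N$ is an isomorphism in $\sV$ by checking that $\sV(W,\et_N)$ is a bijection for every $W\in\sV$: using (\ref{biten2}) the source $\sV(W,\ul{\sM}(M\odot V,N))$ is identified with $\sM((M\odot V)\odot W,N)$, and using (\ref{inthom}) and then (\ref{biten2}) the target $\sV(W,\ul{\sV}(V,\ul{\sM}(M,N)))$ is identified with $\sM(M\odot(W\otimes V),N)$; under these identifications $\sV(W,\et_N)$ becomes precomposition with the isomorphism $M\odot(W\otimes V)\iso M\odot(V\otimes W)\iso (M\odot V)\odot W$ built from the symmetry of $\sV$ and the inverse of (\ref{trans}), hence a bijection. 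Therefore $\et_N$ is an isomorphism, the family $\{\et_N\}$ is the desired first enriched adjunction of (\ref{biten}), and applying $\sV(\mathbf{I},-)$ recovers the given (\ref{biten2}).

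The only genuinely delicate step is the last identification in the converse: verifying that, after applying $\sV(W,-)$ and making the evident reductions via (\ref{inthom}) and (\ref{biten2}), the map $\et_N$ really does correspond to precomposition with the associativity-plus-symmetry isomorphism. This is a diagram chase through the coherence isomorphisms of $\sV$ and the compatibility of enriched composition in $\sM$ with the tensor adjunction (\ref{biten2}); it is routine but must be carried out carefully. Everything else is formal manipulation of the adjunctions (\ref{inthom}), (\ref{inthomtoo}), and (\ref{biten2}).
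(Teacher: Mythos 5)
Your proposal is correct and follows essentially the same route as the paper: both directions come down to the two natural identifications $\sM((M\odot V)\odot W,N)\iso \sV(W,\ul{\sM}(M\odot V,N))$ and $\sM(M\odot(V\otimes W),N)\iso \sV(W,\ul{\sV}(V,\ul{\sM}(M,N)))$ followed by the Yoneda lemma (in $\sM$ for the forward direction, in $\sV$ for the converse), with the dual statement handled by passing to $\sM^{\mathrm{op}}$. Your converse is merely a more explicit version of the paper's Yoneda step, in that you name the canonical comparison map $\et_N$ and verify representably that it is invertible rather than invoking Yoneda abstractly; this is a harmless (indeed slightly more careful) refinement, not a different argument.
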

\begin{proof} For objects $N$ of $\sM$, we have natural isomorphisms
\[  \sM((M\odot V)\odot W, N) \iso \sV(W,\ul{\sM}(M\odot V, N)) \] 
and
\[ \sM(M\odot (V\otimes W),N) \iso \sV(V\otimes W,\ul{\sM}(M,N))\iso \sV(W,\ul{\sV}(V,\ul{\sM}(M,N))). \]
The first statement follows from the Yoneda lemma.  The proof of the second statement is dual.
\end{proof}

Since we take (\ref{biten}) as a standing assumption, we have the isomorphisms (\ref{biten2}), 
(\ref{trans}), (\ref{trans2}).  We have used some other standard maps and isomorphisms without 
comment.  In particular, there is a natural map, sometimes an isomorphism,
\begin{equation}\label{keymap}
\om\colon \ul{\sM}(M,N) \otimes V\rtarr \ul{\sM}(M,N\odot V).
\end{equation}
This map in $\sV$ is adjoint to the map in $\sM$ given by the evident evaluation map
\[ M\odot (\ul{\sM}(M,N)\otimes V) \iso (M\odot \ul{\sM}(M,N))\odot V 
\rtarr N\odot V.\] 

\begin{rem}\mylabel{underly} In the categorical literature, it is standard to let $\sM_0$ 
denote the underlying category of an enriched category $\sM$. Then $\sM_0(M,N)$ denotes 
a morphism set of $\sM_0$ and $\sM(M,N)$ denotes a hom object in $\sV$.  
This notation is logical, but its conflict
with standard practice in the rest of mathematics is obtrusive We
therefore use notation closer to that of the topological and model 
categorical literature. 
\end{rem}

\subsection{Remarks on cofibrantly generated model categories}\label{cofgensec}

\begin{rem}\mylabel{cofgen} Although we have used the standard phrase 
``cofibrantly generated'', we more often have in mind ``compactly generated'' 
model categories. Compact generation, when applicable, allows one to use ordinary 
sequential cell complexes, without recourse to distracting transfinite considerations.
The cell objects are then very much closer to the applications and intuitions 
than are the transfinite cell objects that are standard in the model category 
literature.  Full details of this variant are in \cite{MP}; see also \cite{MS}.  
The point is that the standard enriching categories $\sV$ are compactly 
generated, and so are their associated presheaf categories $\mathbf{Pre} (\sD,\sV)$.
Examples of compactly generated $\sV$ include simplicial sets,
topological spaces, spectra (symmetric, orthogonal, or $S$-modules), and chain 
complexes over commutative rings.
\end{rem}

We sometimes write $\mathcal{I}_{\sM}$ and $\mathcal{J}_{\sM}$ for given sets of generators 
for the cofibrations and acyclic cofibrations of a cofibrantly generated model category $\sM$.
We delete the subscript when $\sM=\sV$. We recall one of the variants of the standard 
characterization of such model categories
(\cite[11.3.1]{Hirsch}, \cite[15.2.3]{MP}, \cite[4.5.6]{MS}). The latter two sources
include details of the compactly generated variant.  As said before, we assume familiarity with the small object argument.
It applies to the construction of both compactly and cofibrantly generated
model categories, more simply for the former. 

Recall that, for
a set of maps $\mathcal{I}$, a relative $\mathcal{I}$-cell complex is a map $A\rtarr X$ 
such that $X$ is a possibly transfinite colimit of objects $X_i$ starting with 
$X_0 = A$. For a limit ordinal $\be$, $X_{\be} = \colim_{\al<\be}X_{\al}$.
For a successor ordinal $\al+1$, $X_{\al+1}$ is the pushout
of a coproduct (of restricted size) of maps in $\mathcal{I}$ along a map from the domain of the coproduct  
into $X_{\al}$.  (Some standard sources reindex so that only one cell is attached at each stage, 
but there is no mathematical point in doing so and in fact that loses naturality; see \cite{MP}.)
In the compact variant, we place no restrictions on the cardinality
of the coproducts and only use countable sequences $\{X_i\}$. 

\begin{defn}\mylabel{catweak} 
A subcategory $\sW$ of a category $\sM$ is a category of weak equivalences if it 
contains all isomorphisms, is closed under retracts, and satisfies the two
out of three property.  A set $\mathcal{J}$ of maps in $\sW$ satisfies the 
acyclicity condition if every relative $\mathcal{J}$-cell complex $A\rtarr X$ is in $\sW$.
\end{defn} 

\begin{rem}  The acyclicity condition captures the crucial point 
in proving the model axioms. In practice, since coproducts
and sequential colimits generally preserve weak equivalences, the proof 
that a given set $\mathcal{J}$ satisfies it boils down to showing that a pushout
of a map in $\mathcal{J}$ is in $\sW$. The verification may be technically 
different in different contexts.  In topological situations, a general discussion 
and precise axiomatizations of how this property can be verified are given in 
\cite[4.5.8, 5.46]{MS}, which apply to all topological situations
the authors have encountered.  
\end{rem}

Write $\mathbf{K}^{\boxslash}$ for the class of maps in $\sM$ that satisfy
the right lifting property (RLP) with respect to a class of maps $\mathbf{K}$. 
Dually, write $^{\boxslash}\mathbf{K}$ for the class of maps in $\sM$ that satisfy
the left lifting property (LLP) with respect to $\mathbf{K}$. 

\begin{rem}  Let $(\sW,\sC,\sF)$ be a model structure on $\sM$. Then 
\[ \sF = (\sW\cap\sC)^{\boxslash}  \ \ \ \text{and} \ \ \ \sW\cap \sF = \sC^{\boxslash} \]
or equivalently
\[ \sC =\,  ^{\boxslash}(\sW\cap \sF) \ \ \ \text{and} \ \ \ \sW\cap\sC =\, ^{\boxslash}\sF. \]
Therefore $\sC$ and $\sC\cap \sW$ must be saturated, that is, closed under pushouts, 
transfinite colimits, and retracts. In particular, any subset $\mathcal{J}$ of $\sW$ satisfies the acyclicity condition.  
No matter how one proves the model axioms, getting at the saturation of 
$\sW\cap\sC$ is the essential point. Identifying a convenient subset of $\sW$ satisfying the acyclicity
condition often works most simply.
\end{rem}

\begin{thm}\mylabel{criterion} Let $\sW$ be a subcategory of weak 
equivalences in a bicomplete category $\sM$ and let $\mathcal{I}$ and $\mathcal{J}$ 
be sets of maps which permit the small object argument. Then $\sM$ is a cofibrantly
generated model category with generating cofibrations $\mathcal{I}$ and 
generating acyclic cofibrations $\mathcal{J}$ if and only if the following 
two conditions hold.
\begin{enumerate}[(i)]
\item $\sJ$ satisfies the acyclicity condition.
\item $\mathcal{I}^{\boxslash} = \sW\cap \mathcal{J}^{\boxslash}$.
\end{enumerate}
\end{thm}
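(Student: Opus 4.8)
The plan is to prove the two implications separately; the forward implication is routine bookkeeping and the reverse implication is where the work lies. For necessity, suppose $\sM$ is a cofibrantly generated model category with generating cofibrations $\mathcal{I}$ and generating acyclic cofibrations $\mathcal{J}$. Then the fibrations are $\sF = \mathcal{J}^{\boxslash}$, and the acyclic fibrations are both $\sC^{\boxslash} = \mathcal{I}^{\boxslash}$ and $\sW\cap\sF = \sW\cap\mathcal{J}^{\boxslash}$, so (ii) holds. For (i), every map in $\mathcal{J}$ is an acyclic cofibration, the class of acyclic cofibrations is closed under pushouts and transfinite composition (being defined by a left lifting property), and hence every relative $\mathcal{J}$-cell complex is an acyclic cofibration; in particular it lies in $\sW$.

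For sufficiency, I would take the given $\sW$ together with $\sC = {}^{\boxslash}(\mathcal{I}^{\boxslash})$ and $\sF = \mathcal{J}^{\boxslash}$ and verify that $(\sW,\sC,\sF)$ is a model structure on the bicomplete category $\sM$. Two out of three and closure under retracts for $\sW$ are hypotheses, and $\sC$ and $\sF$ are closed under retracts automatically. The small object argument for $\mathcal{I}$ factors an arbitrary map as a relative $\mathcal{I}$-cell complex (which lies in $\sC$ by construction) followed by a map in $\mathcal{I}^{\boxslash}$; by (ii) that last class equals $\sW\cap\mathcal{J}^{\boxslash} = \sW\cap\sF$, so this is a (cofibration, acyclic fibration) factorization. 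The small object argument for $\mathcal{J}$ factors an arbitrary map as a relative $\mathcal{J}$-cell complex followed by a map in $\sF$; the first factor lies in $\sW$ by (i), and it lies in $\sC$ because $\mathcal{J}\subseteq\sC$: indeed $\mathcal{I}^{\boxslash} = \sW\cap\mathcal{J}^{\boxslash}\subseteq\mathcal{J}^{\boxslash}$ by (ii), so every map in $\mathcal{J}$ lifts against $\mathcal{I}^{\boxslash}$, i.e. $\mathcal{J}\subseteq{}^{\boxslash}(\mathcal{I}^{\boxslash}) = \sC$, and $\sC$ is closed under the cell-complex constructions. One half of the lifting axiom, that cofibrations lift against acyclic fibrations, is then immediate from (ii) and the definition of $\sC$.

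The step I expect to be the main obstacle is the other half of the lifting axiom: that every $f\in\sW\cap\sC$ has the left lifting property against $\sF$, equivalently that $\sW\cap\sC = {}^{\boxslash}\sF$, so that the $\mathcal{J}$-factorization above is genuinely an (acyclic cofibration, fibration) factorization. I would argue by the standard retract trick. Factor $f = p\com i$ with $i$ a relative $\mathcal{J}$-cell complex and $p\in\sF$; then $i\in\sW$ by (i), so $p\in\sW$ by two out of three, hence $p\in\sW\cap\sF = \mathcal{I}^{\boxslash}$ by (ii). Since $f\in\sC = {}^{\boxslash}(\mathcal{I}^{\boxslash})$, the square with sides $i$, $f$, $p$, $\mathrm{id}$ admits a lift, exhibiting $f$ as a retract of $i$ over its target; as $i$ lies in ${}^{\boxslash}(\mathcal{J}^{\boxslash}) = {}^{\boxslash}\sF$ and that class is closed under retracts, $f\in{}^{\boxslash}\sF$. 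Running the same retract argument starting from an arbitrary $g\in{}^{\boxslash}\sF$ shows conversely that $g$ is a retract of a relative $\mathcal{J}$-cell complex lying in $\sW\cap\sC$, so $g\in\sW\cap\sC$; thus $\sW\cap\sC = {}^{\boxslash}\sF$.

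With both lifting axioms and both factorization axioms in hand, the remaining structural identities are formal. We have $\sF = (\sW\cap\sC)^{\boxslash}$ because $\mathcal{J}\subseteq\sW\cap\sC\subseteq{}^{\boxslash}\sF$ forces $\sF = \mathcal{J}^{\boxslash}\supseteq(\sW\cap\sC)^{\boxslash}\supseteq({}^{\boxslash}\sF)^{\boxslash}\supseteq\sF$, and $\sW\cap\sF = \sC^{\boxslash} = \mathcal{I}^{\boxslash}$ because $\mathcal{I}\subseteq\sC$ gives $\sC^{\boxslash}\subseteq\mathcal{I}^{\boxslash}$ while $\sC = {}^{\boxslash}(\mathcal{I}^{\boxslash})$ gives $\mathcal{I}^{\boxslash}\subseteq\sC^{\boxslash}$. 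This shows $(\sW,\sC,\sF)$ is a model structure, and it is cofibrantly generated by $\mathcal{I}$ and $\mathcal{J}$ by construction, completing the proof.
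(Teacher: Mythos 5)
Your proof is correct and is the standard recognition-theorem argument; the paper itself gives no proof of this statement, citing Hirschhorn, May--Ponto, and May--Sigurdsson instead, and your argument (set $\sC={}^{\boxslash}(\mathcal{I}^{\boxslash})$ and $\sF=\mathcal{J}^{\boxslash}$, use the two small-object factorizations together with (i) and (ii), and the retract trick to obtain $\sW\cap\sC={}^{\boxslash}\sF$) is precisely the proof found in those sources. In particular you correctly isolate the one nontrivial point --- that acyclic cofibrations lift against fibrations --- and dispose of it exactly as the cited references do, so there is nothing to add.
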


In words (ii) says that a map has the RLP with respect to $\mathcal{I}$ if and only
if it is in $\sW$ and has the RLP with respect to $\mathcal{J}$.  It 
leads to the conclusion that $ \sC^{\boxslash} = \sW \cap \sF$. Its
verification is often formal, as in the following remark. 

\begin{rem}  The generating cofibrations and acyclic cofibrations $\mathcal{I}$ and 
$\mathcal{J}$ of the enriching categories $\sV$ that we are interested in 
satisfy (i) and (ii). We construct new model categories by applying a left adjoint 
$F\colon \sV\rtarr \sM$ to obtain generating sets $F\mathcal{I}$ and $F\mathcal{J}$
in $\sM$.  Then condition (ii) is inherited by adjunction from $\sV$, and the adjunction 
reduces the small object argument hypothesis to a smallness condition in $\sV$ 
that is usually easy to verify.  Therefore only (i) needs proof.
\end{rem}

\begin{rem} There are many variants of \myref{criterion}.  In some recent work, the
cofibrantly generated model category $\sM$ is assumed to be locally presentable, 
and then $\sM$ is said to be a combinatorial model category.  This 
ensures that there are no set theoretical issues with the small object argument,
and it allows alternative recognition criteria in which $\mathcal{J}$ is not given 
a priori; see for example \cite[\S A.2.6]{Lurie}.  However, in one variant 
\cite[A.2.6.8]{Lurie}, it is assumed a priori that classes $\sC$ and $\sW$ are
given such that $\sW\cap\sC$ is saturated. In another \cite[A.2.6.13]{Lurie}, 
conditions are formulated that imply directly that $\sW$ is closed under 
pushouts by cofibrations.  It seems unlikely to us that the conditions
required of $\sW$ (especially \cite[A.2.6.10(4)]{Lurie}) hold in the examples 
we are most interested in. 
\end{rem}

\subsection{Remarks on enriched model categories} 

Let $\sM$ be a model category and a $\sV$-category.  The weak equivalences, 
fibrations, and cofibrations live in the underlying category of $\sM$.  

\begin{defn}\mylabel{VModCat} We say that 
$\sM$ is a $\sV$-model category if the map
\begin{equation}\label{tencoten1} 
\ul{\sM}(i^*,p_*)\colon \ul{\sM}(X,E)\rtarr \ul{\sM}(A,E)
\times_{\ul{\sM}(A,B)} \ul{\sM}(X,B) 
\end{equation}
induced by a cofibration $i\colon A\rtarr X$ and fibration 
$p\colon E\rtarr B$ in $\sM$ is a fibration in $\sV$ which is a weak equivalence if either $i$ or $p$
is a weak equivalence. 
\end{defn}
 
The relationship of
(\ref{tencoten1}) with lifting properties should be clear.
By adjunction, as in \cite[4.2.2]{Hovey} or \cite[16.4.5]{MP}, the following two conditions are each 
equivalent to the properties required of (\ref{tencoten1}).
First, for a cofibration $i\colon A\rtarr X$ in $\sV$ and a cofibration
$j\colon B\rtarr Y$ in $\sM$, the pushout product
\begin{equation}\label{tencoten2} 
i\Box j\colon A\odot Y\cup_{A\odot B} X\odot B\rtarr X\odot Y
\end{equation}
is a cofibration in $\sM$ which is a weak equivalence if either $i$ or 
$j$ is a weak equivalence.  Second, for a cofibration $i\colon A\rtarr X$ 
in $\sV$ and a fibration $p\colon E\rtarr B$ in $\sM$, the induced map
\begin{equation}\label{tencoten3} 
F(i^*,p_*)\colon F(X,E)\rtarr F(A,E)\times_{F(A,B)} F(X,B) 
\end{equation}
is a fibration in $\sM$ which is a weak equivalence if either $i$ or 
$p$ is a weak equivalence. 

\begin{defn}\mylabel{monoidal} The model structure on $\sV$ is said to be 
monoidal if the equivalent conditions (\ref{tencoten1}), (\ref{tencoten2}),
and (\ref{tencoten3}) hold when $\sM=\sV$ and if the unit axiom holds:
if $q\colon Q\bI \rtarr I$ is a cofibrant approximation, then the 
map $V\otimes \bQ\mathbf{I}\rtarr V\otimes \mathbf{I}\iso V$ induced by $q$ is a weak equivalence 
for all cofibrant objects $V\in \sV$. When $\sV$ is monoidal, we say that a 
$\sV$-category $\sM$ is a $\sV$-model category if (\ref{tencoten1}), (\ref{tencoten2}),
and (\ref{tencoten3})
hold and the map  $M\odot \bQ\mathbf{I}\rtarr M\odot \mathbf{I}\iso M$ 
induced by a cofibrant approximation $q\colon \bQ\mathbf{I}\rtarr \mathbf{I}$ is a weak equivalence for all 
cofibrant objects $M\in\sM$.
\end{defn}

\begin{rem}\label{UnitAxHypothesis} By a cofibrant approximation, we mean a weak
equivalence, not necessarily a fibration,  with cofibrant domain.  If the unit axiom holds 
for any one given cofibrant approximation $q\colon Q\bI \rtarr I$, then it holds for all 
cofibrant approximations.  
\end{rem}

When $\sM$ is a $\sV$-model category, the homotopy category $\text{Ho}\sM$ is enriched 
over $\text{Ho}\sV$, with 
$\ul{Ho\sM}(M,N)$ represented by the object $\ul{\sM}(M,N)$ of 
$\sV$ when $M$ and $N$ are bifibrant (cofibrant and fibrant). 
We write $[M,N]_{\sM}$ for the
hom sets in $\text{Ho}\sM$, and similarly for $\sV$.  We then have 
\begin{equation}\label{Hohomset}
[M,N]_{\sM} = [\mathbf{I},\ul{Ho\sM}(M,N)]_{\sV}.
\end{equation}
The additional unit assumptions of \myref{monoidal} are necessary for the proof.  A
thorough exposition using the notion of a semicofibrant object to weaken hypotheses
is given by Lewis and Mandell \cite{LM}.  Their paper gives a comprehensive 
treatment of passage from enriched model categories to their homotopy categories in a  context closely related to ours, but with different emphases. 
It focuses on module categories over a commutative monoid $A$ in $\sV$.  If we regard $A$ as a $\sV$-category with a single object, then the category
of right $A$-modules can be identified with $\mathbf{Pre}(A,\sV)$, and many of their results generalize to our setting.

It is worth emphasizing what we have {\em not} required of $\sV$ and $\sM$. We have
not required that the unit of $\sV$ be cofibrant.  That holds in some but 
not all of the most commonly used enriching categories.  It is important
to know when it is needed and when not, and we have been careful to show the places 
where it comes into play.  We return to this point in \S\ref{How?}.   We have also not required $\sV$ to satisfy the 
monoid axiom. We recall it and formulate its analogue for $\sV$-categories, which we have
also not required.

\begin{defn}\mylabel{monax} $\sV$ satisfies the monoid axiom 
if all maps in $\sV$ that are obtained as pushouts or filtered colimits of maps of the form
$\id\otimes i\colon U\otimes V\rtarr U\otimes W$, where $i\colon V\rtarr W$ is an acyclic
cofibration in $\sV$, are weak equivalences.
\end{defn}

\begin{defn}\mylabel{tenax} Let $\sM$ be a $\sV$-model category and $\sD$ be a small
$\sV$-category.  Then $\sM$  satisfies the tensor axiom if the following conditions 
(i) and (ii) hold, and $\sM$ satisfies the $\sD$-tensor axiom if (iii) holds.
\begin{enumerate}[(i)]
\item  All maps in $\sM$ that are obtained as pushouts or filtered colimits of maps of the
form $\id\odot i\colon M\odot V\rtarr M\odot W$, where $i\colon V\rtarr W$ is an acyclic
cofibration in $\sV$, are weak equivalences.
\item All maps in $\sM$ that are obtained as pushouts or filtered colimits of maps of the
form $i\odot \id \colon M\odot V\rtarr N\odot V$, where $i\colon M\rtarr N$ is an acyclic
cofibration in $\sM$, are weak equivalences.
\item  All maps in $\sM$ that are obtained as pushouts or filtered colimits of maps of the
form $i\odot \id \colon M\odot \sD(d,e)\rtarr N\odot \sD(d,e)$, where $i\colon M\rtarr N$ is an acyclic
cofibration in $\sM$, are weak equivalences.
\end{enumerate} 
\end{defn}

\begin{rem} As observed in \cite[3.4]{SS0}, the monoid axiom holds if all objects
of $\sV$ are cofibrant. However, it often holds even when that fails. By the same argument, 
part (i) of the tensor axiom holds if all objects of $\sM$ are cofibrant and part (ii)
holds if all objects of $\sV$ are cofibrant. Restricting to $\sD$, (iii) holds if all
$\sD(d,e)$ are cofibrant.  This gives an advantage to enriching in simplicial sets,
but again, these conditions often hold when not all objects are cofibrant.
\end{rem}

We recalled the characterization of enriched adjunctions in \S\ref{EnHyp}.  
Model categorically, we are interested in Quillen $\sV$-adjunctions.

\begin{defn}\mylabel{Quillad} A Quillen $\sV$-adjunction is a $\sV$-adjunction
such that the induced adjunction on underlying model categories 
is a Quillen adjunction in the usual sense.  (In 
\cite[4.2.18]{Hovey}, the left adjoint $\bT$ is then called a $\sV$-Quillen functor.) 
A Quillen $\sV$-equivalence is a Quillen $\sV$-adjunction 
such that the induced adjunction on underlying model categories 
is a Quillen equivalence in the usual sense.
\end{defn}

\subsection{The level model structure on presheaf categories}\label{ModelD}

For a small $\sV$-category $\sD$ and any $\sV$-category $\sM$, 
$\mathbf{Fun} (\sD^{op},\sM)$ denotes the category of $\sV$-functors
$\sD\rtarr \sM$ and $\sV$-natural transformations.  When
$\sM=\sV$ we use the alternative notation $\mathbf{Pre} (\sD,\sV) = \mathbf{Fun} (\sD^{op},\sV)$.
Good references for the general structure of such categories are \cite{BorII, DS, SS2, Shul},
and we shall say more in \S\ref{CatPre}.  
We write $\mathbf{Fun} (\sD^{op},\sM)(X,Y)$ and $\mathbf{Pre} (\sD,\sV)(X,Y)$ for morphism sets in these
categories.
  
We remind the reader that we have no interest in the underlying category of $\sD$, 
and we write
$\sD(d,e)$ rather than $\ul{\sD}(d,e)$ for its hom objects in $\sV$.
It is standard, especially in additive situations, to think of a small
$\sV$-category $\sD$ as a kind of categorical ``ring with many objects''
and to think of (contravariant) $\sV$-functors defined on $\sD$ as 
(right) $\sD$-modules. Many ideas and proofs become more transparent when 
first translated to the language of rings and modules.

Let $X$ be an object of $\mathbf{Fun} (\sD^{op},\sM)$.  Writing $d \mapsto X_d$,
$X$ is given by maps
$$  X(d,e)\colon {\sD}(d,e)\rtarr \ul{\sM}(X_e,X_d)$$ 
in $\sV$. The $\sV$-natural transformations $f\colon X\rtarr Y$ are given by 
maps $f_d\colon X_d\rtarr Y_d$ in $\sM$ such that the following diagrams commute 
in $\sV$.
\begin{equation}\label{Vnat}
\xymatrix{
{\sD}(d,e) \ar[r]^-{X} \ar[d]_-{Y} & \ul{\sM}(X_e,X_d) \ar[d]^{(f_d)_{*}}\\
\ul{\sM}(Y_e,Y_d) \ar[r]_-{(f_e)^{*}} & \ul{\sM}(X_e,Y_d).\\}
\end{equation}

The category $\mathbf{Fun} (\sD^{op},\sM)$ is the underlying category of a 
$\sV$-category, as we shall explain in \S\ref{CatPre}, where we say more
about the relevant enriched category theory. We focus here on
the model categories of presheaves relevant to this paper.

\begin{defn}\mylabel{level}  Let $\sM$ be a cofibrantly generated $\sV$-model category. A map 
$f\colon X\rtarr Y$ in $\mathbf{Fun} (\sD^{op},\sM)$ is a level weak equivalence or level fibration if 
each map $f_d\colon X_d\rtarr Y_d$ is a weak equivalence or fibration in $\sM$. Recall the functors 
$F_d\colon \sM\rtarr \mathbf{Fun} (\sD^{op},\sM)$ from \myref{Fd} and define 
$F\mathcal{I}_{\sM}$ and $F\mathcal{J}_{\sM}$ 
to be the sets of all maps $F_di$ and $F_dj$ in $\mathbf{Fun} (\sD^{op},\sM)$, where $d\in \sD$, 
$i\in \mathcal{I}_{\sM}$, and $j\in \mathcal{J}_{\sM}$. 
\end{defn} 

\begin{thm}\mylabel{level2} If $F\mathcal{I}_{\sM}$ and $F\mathcal{J}_{\sM}$ admit the small object argument and $F\mathcal{J}_{\sM}$ satisfies the acyclicity condition, then $\mathbf{Fun} (\sD^{op},\sM)$ is a cofibrantly generated $\sV$-model category under the level weak equivalences and level fibrations; the sets $F\mathcal{I}_{\sM}$ and $F\mathcal{J}_{\sM}$ are the generating cofibrations and generating acyclic cofibrations. If $\sM$ is proper, then so is $\mathbf{Fun} (\sD^{op},\sM)$. The adjunctions 
$(F_d,\text{ev}_d)$ are Quillen adjunctions. If the functors ${\sD}(e,d)\odot (-)$ preserve cofibrations, then cofibrations in $\mathbf{Fun} (\sD^{op},\sM)$ are level cofibrations, hence cofibrant objects are level cofibrant. 
\end{thm}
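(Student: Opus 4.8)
The plan is to obtain the model structure by verifying the hypotheses of the recognition criterion \myref{criterion} for $\mathbf{Fun}(\sD^{op},\sM)$, taking $F\mathcal{I}_{\sM}$ and $F\mathcal{J}_{\sM}$ as candidate generating cofibrations and generating acyclic cofibrations and $\sW$ to be the subcategory of level weak equivalences. That $\sW$ is a category of weak equivalences in the sense of \myref{catweak} is immediate, since isomorphisms, retracts and the two out of three property are all detected objectwise and $\sM$ is a model category. The smallness hypothesis and condition (i) of \myref{criterion}, the acyclicity condition for $F\mathcal{J}_{\sM}$, are assumed outright, so the only real point is condition (ii), that $(F\mathcal{I}_{\sM})^{\boxslash} = \sW\cap (F\mathcal{J}_{\sM})^{\boxslash}$.

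To get (ii) I would run the lifting classes through the $\sV$-adjunctions $(F_d,\text{ev}_d)$ of \myref{Fb}: a map $f\colon X\rtarr Y$ has the RLP with respect to $F_d i$ if and only if $f_d = \text{ev}_d f$ has the RLP with respect to $i$ in $\sM$. Hence $f\in (F\mathcal{I}_{\sM})^{\boxslash}$ if and only if each $f_d$ lies in $\mathcal{I}_{\sM}^{\boxslash}$, i.e. $f$ is a level acyclic fibration, and $f\in (F\mathcal{J}_{\sM})^{\boxslash}$ if and only if $f$ is a level fibration. The identity $\mathcal{I}_{\sM}^{\boxslash} = \sW_{\sM}\cap \mathcal{J}_{\sM}^{\boxslash}$, which holds because $\sM$ is a model category, read off at each object $d$ of $\sD$, is exactly (ii). So \myref{criterion} delivers a cofibrantly generated model structure on $\mathbf{Fun}(\sD^{op},\sM)$ with the asserted fibrations, weak equivalences, cofibrations and generating sets, and $(F_d,\text{ev}_d)$ is then a Quillen adjunction because $\text{ev}_d$ sends level fibrations and level acyclic fibrations to fibrations and acyclic fibrations by construction. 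Right properness is equally formal, as fibrations, weak equivalences and pullbacks are all levelwise and $\sM$ is right proper; left properness I will return to below.

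For the $\sV$-model structure I would appeal to \myref{monoidal}. The action of $\sV$ is levelwise, $(X\odot V)_e = X_e\odot V$, and a short computation from \myref{Fd} gives a natural isomorphism $F_d M\odot V \iso F_d(M\odot V)$. Since $F_d$ is a left adjoint and carries $\mathcal{I}_{\sM}$ and $\mathcal{J}_{\sM}$ into $F\mathcal{I}_{\sM}$ and $F\mathcal{J}_{\sM}$, it carries cofibrations and acyclic cofibrations of $\sM$ to cofibrations and acyclic cofibrations of $\mathbf{Fun}(\sD^{op},\sM)$. The pushout product axiom (\ref{tencoten2}) need only be checked on generators, and for $i\in\mathcal{I}$ and $F_d j\in F\mathcal{I}_{\sM}$ one has $i\Box F_d j \iso F_d(i\Box j)$, where $i\Box j$ is the pushout product of \emph{(\ref{tencoten2})} in the $\sV$-model category $\sM$, hence a cofibration there, acyclic if $i$ or $j$ is; applying $F_d$ gives what is needed. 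For the unit axiom, on the generator $F_d M$ the comparison map $X\odot Q\mathbf{I}\rtarr X$ is identified through the isomorphism above with $F_d$ applied to $M\odot Q\mathbf{I}\rtarr M$; when $M$ is cofibrant this is a weak equivalence between cofibrant objects of $\sM$, so $F_d$ of it is a weak equivalence by Ken Brown's lemma \cite[1.1.12]{Hovey}, and a cell induction propagates this to all cofibrant $X$.

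Finally, the last assertion: if each $\sD(e,d)\odot(-)$ preserves cofibrations then, for $j\in\mathcal{I}_{\sM}$, the map $(F_d j)_e \iso \sD(e,d)\odot j$ is a cofibration in $\sM$, so every generating cofibration of $\mathbf{Fun}(\sD^{op},\sM)$ is a level cofibration; as level cofibrations are closed under pushout, coproduct, transfinite composition and retract, so is every cofibration, whence cofibrant objects are level cofibrant. The hard part is precisely the two verifications that cannot be done objectwise, because cofibrations in $\mathbf{Fun}(\sD^{op},\sM)$ are not level cofibrations in general: the unit axiom and left properness. Each is reduced to the generating cofibrations and then handled by a cell induction resting on the gluing lemma, which is clean using left properness of $\sM$ for the properness statement, and which collapses to the corresponding objectwise statements in $\sM$ under the cofibration-preservation hypothesis of the last sentence.
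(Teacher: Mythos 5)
Your proposal is correct and follows the same skeleton as the paper's proof: the model structure is obtained from \myref{criterion} with the smallness and acyclicity hypotheses assumed outright and condition (ii) transported levelwise through the adjunctions $(F_d,\text{ev}_d)$, the Quillen adjunction claim is immediate from the levelwise definitions, and the $\sV$-model (pushout-product) condition is reduced to generators via $F_dM\odot V\iso F_d(M\odot V)$, which is exactly what the paper means by ``follows by adjunction.'' The one genuine divergence is the final clause: the paper argues dually, replacing $\sD$ by $\sD^{op}$ and using that $\text{ev}_d$ on $\mathbf{Fun}(\sD,\sM)$ has right adjoint $G_d=F(\bY(d),-)$ by \myref{Gb}, so that (\ref{biten}) shows $G_d$ preserves acyclic fibrations when the functors $\sD(e,d)\odot(-)$ preserve cofibrations and hence $\text{ev}_d$ preserves cofibrations; your route---each generating cofibration $F_di$ is then a level cofibration, and level cofibrations are closed under coproducts, pushouts, transfinite composition and retracts---is equally valid and more elementary, at the cost of invoking the cell-complex description of cofibrations. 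Finally, you are more explicit than the paper on two points it passes over quickly: the unit condition of \myref{monoidal} (not mentioned in the paper's proof) and left properness (which the paper dismisses as levelwise-formal). Your observation that these cannot be checked objectwise because cofibrations in $\mathbf{Fun}(\sD^{op},\sM)$ need not be level cofibrations is fair, but your cell induction there remains a sketch---note in particular that cell attachments $F_dA\rtarr F_dB$ may have non-cofibrant $A$, so the gluing step needs care---though on these points you are no less complete than the paper's own proof.
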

\begin{proof}  We have assumed the smallness condition and condition (i)
of \myref{criterion}, and condition (ii) is inherited by adjunction from 
$\sM$. Since pushouts, pullbacks, and weak equivalences are defined levelwise, 
$\mathbf{Fun} (\sD^{op},\sM)$ is proper when $\sM$ is. To see that 
$\mathbf{Fun} (\sD^{op},\sM)$ is a $\sV$-model category, it suffices to verify the pushout product characterization (\ref{tencoten2}) of  $\sV$-model categories, and this follows by adjunction from the fact that $\sM$ is a $\sV$-model category.  By definition, the functors $\text{ev}_d$
preserve fibrations and weak equivalences. For the last statement, we may as well replace
$\sD$ by $\sD^{op}$ and consider the model structure on $\mathbf{Fun} (\sD,\sM)$. 
By \myref{Gb}, its evaluation functor $\text{ev}_d$ has right adjoint $ G_d =F(\bY(d),-)$. 
The adjunction (\ref{biten}) implies that the functors $G_d$ preserve acyclic fibrations
when the functors ${\sD}(e,d)\odot (-)$ preserve cofibrations.  In turn, when that holds
the functors $\text{ev}_d$ preserve cofibrations.
\end{proof} 

\begin{rem}\mylabel{RepCof} By adjunction, since acyclic fibrations are level acyclic fibrations, 
if $i\colon M\rtarr N$ is a cofibration in $\sM$ then 
$F_di\colon F_d M\rtarr F_d N$ is a cofibration in $\mathbf{Fun} (\sD^{op},\sM)$ for any $d\in\sD$. 
Therefore, if $M$ is cofibrant, then each $F_dM$ is cofibrant. In particular, if 
$\sM=\sV$ and $\mathbf{I}$ is cofibrant, then each represented presheaf $\bY(d)$ 
is cofibrant in $\mathbf{Pre} (\sD,\sV)$. This need not hold in general, and that gives one
reason for preferring to enrich in monoidal categories $\sV$ with a cofibrant
unit object.  As discussed in \S\ref{How?} below, one might be able to use \myref{Muro} to arrange this.
\end{rem}

\begin{rem}\label{SmallObjArg}
By adjunction, the smallness condition on $F\mathcal{I}_{\sM}$ and $F\mathcal{J}_{\sM}$ means 
that the domains of maps $i\in \mathcal{I}_{\sM}$ and $j\in \mathcal{J}_{\sM}$ are small with 
respect to the level maps $A_d\rtarr X_d$ of a relative $F\mathcal{I}_{\sM}$ or $F\mathcal{J}_{\sM}$ cell complex $A\rtarr X$ in $\mathbf{Fun} (\sD^{op},\sM)$. This means that, in the arrow category of $\sM$, a map 
from a generating cofibration or acyclic cofibration into the levelwise colimit obtained from a relative cell complex factors through one of its terms. In practice, for example when $\sM=\sV$ is any of the usual compactly generated enriching categories, such as those listed in \myref{cofgen}, this condition holds trivially.  In topological situations, it often follows from the compactness of the domains of maps in 
$\mathcal{I}_{\sM}$ and $\mathcal{J}_{\sM}$.  In algebraic situations, the compactness is often even simpler since the relevant domains are free on a single generator.  We generally ignore the smallness condition, 
since it is not a serious issue in our context.
\end{rem}

\begin{rem}\mylabel{monoid}
The acyclicity condition has more substance, but it also usually holds 
in practice.  It clearly holds if $\sM$ satisfies the $\sD$-tensor axiom.
In particular, it holds if the functors ${\sD}(e,d)\odot (-)$ on $\sM$ 
are Quillen left adjoints or if all ${\sD}(d,e)$ are cofibrant in $\sV$. 
It holds for any $\sD$ if $\sM$ satisfies condition (ii) of the tensor axiom.

When the monoid axiom holds, \myref{level2} for $\sM=\sV$ is \cite[6.1]{SS2}; see also 
\cite[7.2]{SS2} for stable situations.  In topological situations, \myref{level2}
often applies even when the ${\sD}(d,e)$ are not cofibrant, the monoid axiom fails for 
$\sV$, and the functors ${\sD}(d,e)\odot(-)$ do not preserve level acyclic cofibrations.  
As noted earlier, axiomatizations of exactly what is needed to ensure this are given 
in \cite[4.5.8, 5.4.6]{MS}, which apply to all situations we have encountered. An
essential point is that in topology, and also in homological algebra, one has both 
classical cofibrations (HEP) and the cofibrations of the Quillen model structure,
and one can exploit the more general classical cofibrations to check the acyclicity
condition.
\end{rem} 

\begin{rem}\mylabel{Good}  One sometimes starts with a plain unenriched
category $\sC$ rather than an enriched category $\sD$.  To relate this to the enriched 
context, let 
$\sD =\mathbf{I}[\sC]$ be the $\sV$-category with the same object set as $\sC$ and 
with morphism objects $\mathbf{I}[\sC(d,e)]$. The composition is induced from that of $\sC$.  
If $\mathbf{I}$ is cofibrant in $\sV$, then $\mathbf{I}[S]$ is cofibrant for all sets $S$ and 
the acyclicity condition holds for $\mathbf{I}[\sC]$ and any $\sV$-model category $\sM$. Using that 
$\mathbf{I}[S]\odot V$ is the coproduct of copies of $V$ indexed by the elements of $S$, we see 
that the ordinary category $\mathbf{Fun} (\sC^{op},\sM)$ of unenriched presheaves in $\sM$ is 
isomorphic to the underlying category of the $\sV$-category $\mathbf{Fun} (\sD^{op},\sM)$.
\end{rem} 

In model category theory, diagram categories with discrete domain categories 
$\sC$ are often used to study homotopy limits and colimits \cite{DHKS, Hirsch, Hovey}. 
Shulman \cite{Shul} has given a study of enriched homotopy limits and colimits in 
$\sV$-model categories $\sM$, starting in the same general framework in which we are working.

\subsection{How and when to make the unit cofibrant?}\label{How?}

The unit $\mathbf{I}$ of $\sV$ may or may not be cofibrant in the model structure 
we start with on $\sV$, but the unit axiom of \myref{monoidal} holds in all cases of interest.   Often there is some cofibrant 
approximation $q\colon \bQ\mathbf{I} \rtarr \mathbf{I}$ such that  $\id \sma q\colon V\otimes \bQ\mathbf{I} \rtarr V\otimes \mathbf{I}\iso V$ is a  weak
equivalence for all objects $V\in \sV$, not just the cofibrant ones.   We then say that the very strong unit axiom holds.  As proven in \cite[Corollary 9]{Muro}, 
this condition holds for any cofibrant approximation $q$ if tensoring with a  cofibrant object preserves weak equivalences, which is often the case.  Thus
assuming the very strong unit axiom is scarcely more restrictive than assuming the unit axiom.  

This axiom is closely related to the theory of semicofibrant  objects developed by Lewis and Mandell \cite{LM}.  
Long after the posted draft of this paper appeared,  Muro wrote an illuminating paper \cite{Muro} in which he 
proved the following result. He does not assume that the monoidal structure on $\sV$ is symmetric, but we do assume that.

\begin{thm}[Muro]\mylabel{Muro}  Let $\sV$ be either a combinatorial or a cofibrantly generated monoidal model category satisfying the very strong unit axiom.
Then there is a combinatorial or cofibrantly generated monoidal model structure, denoted $\tilde{\sV}$, on the underlying category of $\sV$ which has the same weak
equivalences as $\sV$ and in which the unit object is cofibrant. The identify functor on $\sV$ is a left Quillen equivalence from
$\sV$ to $\tilde{\sV}$.  If $\sV$ satisfies the monoid axiom, then so does $\tilde V$.  If $\sV$ is left or right proper, then so is $\tilde{V}$.
\end{thm}

Thus $\tilde{\sV}$ has more cofibrations and therefore fewer fibrations than $\sV$. In the combinatorial case, the acyclic fibrations of $\sV$ are
the surjective acyclic fibrations of $\sV$.  In the cofibrantly generated case, the generating cofibrations are obtained by adding the morphism
$\emptyset \to \mathbf{I}$ to the generating cofibrations of $\sV$, and the generating acyclic cofibrations are obtained by adding a certain
well chosen acyclic cofibration between cofibrant approximations of $\mathbf{I}$ to the generating acyclic cofibrations of $\sV$.   

\begin{rem}\mylabel{unitquest}
This result raises some questions that we have not tried to answer.  First, under what conditions on $\sM$
is it true that a $\sV$-model structure  on $\sM$ is necessarily a $\tilde{\sV}$-model structure?   It is clear from
the definitions that a $\tilde{\sV}$-model structure is necessarily a $\sV$-model structure.  When $\sM$ is a
cofibrantly generated $\tilde{\sV}$-model category, \myref{level2} applies to show that presheaf categories with 
values in $\sM$ are also $\tilde{\sV}$-model categories.  It seems plausible that many of our model theoretic 
results that assume that $\mathbf{I}$ is cofibrant work without 
the unit assumption, by replacing $\sV$ by $\tilde{\sV}$ in their proofs.
\end{rem}

The cofibrancy of unit condition is related to the categorical fact that the automorphism group of 
the unit object of a symmetric monoidal category is commutative.   In stable homotopy theory at
least, that in effect forces interest in examples where the unit is not cofibrant.

\begin{rem}\mylabel{Vpos}
In the category $\sV$ of $S$-modules \cite{EKMM}, the unit is not cofibrant and every object is fibrant. 
For reasons explained in \cite[Remark 11.2]{Rant1}, we cannot hope to make the unit cofibrant 
and keep the property that every object is fibrant; compare  \cite[Example 6]{Muro}. In the categories $\sV$
of symmetric and orthogonal spectra, the unit is cofibrant  and, to deal with commutative monoids,
it is essential to change the given model category to a ``positive'' model structure $\sV_+$ in which the unit is not
cofibrant.  Such situations are alluded to in \myref{subtle}.  Applying 
\myref{Muro} to $\sV_+$ results in a model structure that interpolates between $\sV$ and $\sV_+$: there are left 
Quillen equivalences $\sV_+ \rtarr \tilde{\sV}_+ \rtarr \sV$; compare \cite[Examples 2, 5]{Muro}. 
\end{rem}

\myref{Muro} does not change the need for the weakly unital $\sV$-categories of \S3.5 since
those address categorical rather than just model theoretic issues.

\section{Appendix: Enriched presheaf categories}\label{diagram}

\subsection{Categories of enriched presheaves}\label{CatPre}

We record some categorical observations about categories $\mathbf{Fun} (\sD^{op},\sM)$ of
enriched presheaves.  Remember that $\mathbf{Pre} (\sD,\sV) = \mathbf{Fun} (\sD^{op},\sV)$.
We ignore model structures
in this section, so we only assume that $\sV$ is a bicomplete closed symmetric
monoidal category and $\sM$ is a bicomplete $\sV$-category.  Then $\mathbf{Fun} (\sD^{op},\sM)$
is a $\sV$-category. The enriched hom $\ul{\mathbf{Fun} }(\sD^{op},\sM)(X,Y)$ is the 
equalizer in $\sV$  displayed in the diagram
\begin{equation}\label{PXY}
\xymatrix@1{
 \ul{\mathbf{Fun} }(\sD^{op},\sM)(X,Y) \ar[r] & \prod_{d} \ul{\sM}(X_d,Y_d) \ar@<1ex>[r] \ar@<-1ex>[r] 
 & \prod_{d,e}\ul{\sM}({\sD}(e,d) \odot X_d,Y_e).\\} 
\end{equation} 
The parallel arrows are defined using the evaluation maps
\[ {\sD}(e,d)\odot X_d\rtarr X_e\ \ \text{and}\ \  
{\sD}(e,d)\odot Y_d\rtarr Y_e \]
of the $\sV$-functors $X$ and $Y$, in the latter case after composition with
\[ {\sD}(e,d)\odot (-)\colon \ul{\sM}(X_d,Y_d)\rtarr 
\ul{\sM}({\sD}(e,d)\odot X_d,{\sD}(e,d)\odot Y_d).\]
The $\sV$-category $\mathbf{Fun} (\sD^{op},\sM)$ is bicomplete, with colimits, limits,
tensors and cotensors defined in the evident objectwise fashion; in particular,
\[ (X\odot V)_d = X_d\odot V\ \ \text{and} \ \ F(V,X)_d = F(V,X_d).\]

For clarity below, the reader should notice the evident identifications
\[ \mathbf{Fun} (\sD^{op},\sM^{op}) \iso \mathbf{Fun} (\sD,\sM)^{op} \ \ \text{and hence}\ \ 
\mathbf{Fun} (\sD,\sM^{op}) \iso \mathbf{Fun} (\sD^{op},\sM)^{op}. \] 
Applied levelwise, the functors $(-)\odot M\colon\sV\rtarr \sM$ and
$F(-,M)\colon \sV^{\text{op}}\rtarr \sM$ for varying $M$ 
induce $\sV$-functors
\[\odot \colon \mathbf{Fun} (\sD^{op},\sV) \otimes \sM\rtarr \mathbf{Fun} (\sD^{op},\sM) \]
and 
\[ F\colon \mathbf{Fun} (\sD^{op},\sV) \otimes \sM\rtarr \mathbf{Fun} (\sD,M).\]
Similarly, the functors $\ul{\sM}(M,-)$ and $\ul{\sM}(-,M)$ induce 
$\sV$-functors, denoted $\ul{\sM}(-,-)$,
\[\sM^{op}\otimes F(\sD^{op},\sM) \rtarr \mathbf{Fun} (\sD^{op},\sV)
\ \ \text{and}\ \  \mathbf{Fun} (\sD^{op},\sM) \otimes \sM\rtarr 
\mathbf{Fun} (\sD,\sV). \]

Now let
\[X\in \mathbf{Fun} (\sD^{op},\sV), \ \ Y\in \mathbf{Fun} (\sD^{op},\sM), \ \ \text{and}
Z\in \mathbf{Fun} (\sD,M).\]  The categorical tensor product (specializing
left Kan extension) of the
contravariant functor $X$ and the covariant functor $Z$ on $\sD$ gives the
object $X\odot_{\sD} Z\in \sM$ displayed in the coequalizer diagram
\begin{equation}\label{FWXO}
\xymatrix@1{
\coprod_{d,e}X_e\otimes {\sD}(d,e)\odot Z_d \ar@<1ex>[r] \ar@<-1ex>[r] &
\coprod_d X_d\odot Z_d\ar[r] & X\odot_{\sD} Z.\\}
\end{equation}
The parallel arrows are defined using the evaluation maps of $X$ and $Z$
and the isomorphism (\ref{trans}). Similarly, the categorical
hom of the contravariant functors $X$ and $Y$ gives the object
$F_{\sD}(X,Y)\in\sM$ displayed in the analogous equalizer diagram
\begin{equation}\label{FWX}
\xymatrix@1{
 F_{\sD}(X,Y) \ar[r] & \prod_{d} F(X_d,Y_d) \ar@<1ex>[r] \ar@<-1ex>[r] 
 & \prod_{d,e} F({\sD}(e,d) \otimes X_d,Y_e).\\} 
\end{equation}
With these constructions, we have adjunctions analogous to those of (\ref{biten}):
\begin{equation}\label{WMXO}
\ul{\sM}(X\odot_{\sD} Z,M) \iso \ul{\mathbf{Fun} }(\sD^{op},\sV)(X,\ul{\sM}(Z,M))
\iso \ul{\mathbf{Fun} }(\sD,\sM)(Z,F(X,M))
\end{equation}  
and
\begin{equation}\label{WMX} 
\ul{\mathbf{Fun} }(\sD^{op},\sM)(X\odot M, Y) \iso \ul{\mathbf{Fun} }(\sD^{op},\sV)(X,\ul{\sM}(M,Y)) 
\iso \ul{\sM}(M,F_{\sD}(X,Y)).
\end{equation}
Applying $\sV(\mathbf{I},-)$, there result ordinary adjunctions, with hom sets
replacing hom objects in $\sV$, that are 
analogous to those displayed in (\ref{biten2}).

The proofs of the adjunctions in \myref{Fb} and \myref{Gb} are now immediate.
By (\ref{WMX}) and the enriched Yoneda lemma, we have
\begin{eqnarray*} 
\ul{\mathbf{Fun} }(\sD^{op},\sM)(F_d M, Y) &\iso & \ul{\mathbf{Fun} }(\sD^{op},\sV)(\bY(d),\ul{\sM}(M,Y)) \\
 &\iso & \ul{\sM}(M,Y_d)= \ul{\sM}(M,\text{ev}_d(Y)).
\end{eqnarray*}
Dually, by (\ref{WMXO}) and the enriched Yoneda lemma, we have
\begin{eqnarray*} 
\ul{\mathbf{Fun} }(\sD,\sM)(Z, G_dM ) & \iso & \ul{\mathbf{Fun} }(\sD^{op},\sV) (\bY(d),\ul{\sM}(Z,M)) \\
  &\iso & \ul{\sM}(Z_d,M) = \ul{\sM}(\text{ev}_d(Z),M).  
\end{eqnarray*}

Since limits, colimits, tensors, and cotensors in $\mathbf{Fun} (\sD^{op},\sM)$ are defined levelwise, the functors ${\text{ev}_d}$ preserve all of these, and so do their adjoints $F_d$ and $G_d$. 

\subsection{Constructing $\sV$-categories over a full $\sV$-subcategory of $\sV$}\label{overV}

In the sequel \cite{GM2} we are especially interested in finding calculationally 
accessible domain $\sV$-categories $\sC$ for categories of presheaves $\sV^{\sC}$ that are 
equivalent to categories of presheaves $\mathbf{Pre} (\sD,\sV)$, where $\sD$ is a well chosen full 
$\sV$-subcategory of an ambient $\sV$-category $\sM$.  Of course, for that purpose we are not at all concerned with the underlying categories of $\sC$ and $\sD$.  In \S\ref{overM}, 
we shall give a theoretical description of all such $\sV$-maps $\sC\rtarr \sD$, where 
$\sD$ is preassigned.

Here we restrict attention to $\sM = \sV$ and give a simple general way of 
constructing a $\sV$-map $\ga\colon \sC\rtarr \sD$ where $\sC$ is a small
$\sV$-category and $\sD$ is a full $\sV$-category of $\sV$ whose objects 
are specified in terms of $\sC$.  Despite its simplicity, this example will play
a key role in the sequel \cite{GM2}.  

\begin{con} Fix an object $e\in \sC$.  In the applications,
$e$ is a distinguished object with favorable properties. Let $\sD$ be the
full $\sV$-subcategory of $\sV$ whose objects are the $\sC(e,c)$ for $c\in \sC$. 
We define a $\sV$-functor $\ga\colon \sC\rtarr \sD$ such that $\ga(c)= \sC(e,c)$ 
on objects. The map
\[ \ga\colon \sC(b,c) \rtarr \sD(b,c) = \ul{\sV}(\sC(e,b),\sC(e,c)) \]
in $\sV$ is the adjoint of the composition 
\[ \circ\colon \sC(b,c)\otimes \sC(e,b)\rtarr \sC(e,c). \]
The diagrams 
\[ \xymatrix{
& \mathbf{I} \ar[dl]_-{\et} \ar[dr]^{\et}  & \\
\sC(c,c) \ar[rr]_-{\ga} & & \ul{\sV}(\sC(e,c),\sC(e,c))\\} \]
and
\[ \xymatrix{
\sC(b,c)\otimes \sC(a,b) \ar[r]^-{\ga\otimes \ga} \ar[d]_{\circ}& 
\ul{\sV}(\sC(e,b),\sC(e,c))\otimes \ul{\sV}(\sC(e,a),\sC(e,b)) \ar[d]^{\circ}\\
\sC(a,c)\ar[r]_-{\ga} & \ul{\sV}(\sC(e,a),\sC(e,c))\\} \]
commute since their adjoints
\[ \xymatrix{
& \mathbf{I}\otimes \sC(e,c) \ar[dl]_{\et\otimes \id} \ar[dr]^{\iso} &\\
\sC(c,c)\otimes \sC(e,c) \ar[rr]_-{\circ} & & \sC(e,c)\\} \]
and 
\[ \xymatrix{
\sC(b,c)\otimes \sC(a,b)\otimes \sC(e,a) \ar[r]^-{\id\otimes \circ} 
\ar[d]_{\circ\otimes\id} & \sC(b,c)\otimes \sC(e,b) \ar[d]^{\circ}\\
\sC(a,b)\otimes \sC(e,a) \ar[r]_{\circ} & \sC(a,c)\\}  \]
commute. To see that the last diagram is adjoint to the second, observe that
$$\ga\otimes\ga = (\ga\otimes \id)\circ(\id\otimes \ga).$$
\end{con}

\begin{rem}\mylabel{gammalax} When $\sC$ is a symmetric monoidal $\sV$-category with unit 
object $e$ and product $\Box$, the composite $\ga\colon \sC\rtarr \sD\subset \sV$ is a
lax symmetric monoidal $\sV$-functor.  The data showing this are the unit map
$\mathbf{I}\rtarr \sC(e,e) = \ga(e)$ and the product map
\[ \Box \colon \ga(b)\otimes \ga(c) = \sC(e,b)\otimes \sC(e,c) 
\rtarr \sC(e,b\Box c) = \ga(b\Box c), \]
where we have used the canonical isomorphism $e\Box e\iso e$. 
\end{rem}

\subsection{Characterizing $\sV$-categories over a full $\sV$-subcategory of $\sM$}\label{overM}
We use the first adjunction of (\ref{biten}) to characterize the $\sV$-categories 
$\ga\colon \sC\rtarr \sD$ over any full $\sV$-subcategory $\sD$ of a $\sV$-category $\sM$. 
Technically, we do not assume that $\sM$ is bicomplete, but we do assume the adjunction, 
so that we have tensors; we write them as $V\odot M$.  Let $\sV$-$\sC\!at/ \sD$ be the category whose objects are the $\sV$-functors 
$\ga\colon \sC\rtarr \sD$ that are the identity on objects and whose morphisms are the 
$\sV$-functors $\al\colon \sC\rtarr \sC'$ such that $\ga'\com \al = \ga$.

Consider the following data.
\begin{enumerate}[(i)]
\item For each pair $(d,e)$ of objects of $\sD$, an object $\sC(d,e)$ of $\sV$ and an
``evaluation map''  $\epz\colon \sC(d,e)\odot d \rtarr e$ in $\sM$ with adjoint map (in $\sV$)
\[ \ga\colon \sC(d,e)\rtarr \sD(d,e) = \ul{\sM}(d,e). \]
We require the following associativity diagram to commute for $(b,c,d,e)\in \sD$.
 \[ \xymatrix{
(\sC(d,e)\otimes \sC(c,d) \otimes \sC(b,c))\odot b \ar[d]_{(\mu\otimes\mbox{id})\odot\mbox{id}} 
\ar[rr]^-{(\mbox{id}\otimes \mu)\odot \mbox{id}}
& & (\sC(d,e)\otimes(\sC(b,d)) \odot b \ar[d]^{\iso}\\
(\sC(c,e)\otimes \sC(b,c)) \odot b \ar[d]_{\iso}
& &\sC(d,e)\odot(\sC(b,d)\odot b) \ar[d]^{\mbox{id}\odot \epz}\\
\sC(c,e) \odot (\sC(b,c)\odot b) \ar[d]_{\mbox{id}\odot \epz} 
& &\sC(d,e)\odot d \ar[d]^{\epz}\\
\sC(c,e) \odot c \ar[rr]_-{\epz}
& & e\\} \]
Diagram chasing then shows that, under the canonical isomorphism of their sources, the composites 
in the diagram agree with the following composite of evaluation maps.
\[ \sC(d,e)\odot \sC(c,d) \odot \sC(b,c)\odot b \rtarr
\sC(d,e)\odot\sC(c,d)\odot c \rtarr
\sC(d,e)\odot d \rtarr e.  \]
\item For each object $d$ of $\sD$ a ``unit map'' $\et\colon \mathbf{I}\rtarr \sC(d,d)$ in $\sV$
such that the following diagram commutes.
\[ \xymatrix{
& \mathbf{I}\odot d\ar[dl]_{\et\odot\text{id}} \ar[dr]^{\iso}  & \\
\sC(d,d)\odot d \ar[rr]_{\epz} & & d\\} \]
Using the isomorphism (\ref{trans}), which depends on having the enriched adjunction (\ref{biten}) 
in $\sM$, we define composition maps
\[ \mu\colon \sC(d,e)\otimes \sC(c,d) \rtarr \sC(c,e) \] 
in $\sV$ as the adjoints of the following composites of evaluation maps in $\sM$.
\[ \xymatrix@1{
({\sC}(d,e)\otimes {\sC}(c,d))\odot c \iso {\sC}(d,e)\odot ({\sC}(c,d)\odot c)
\ar[r]^-{\mbox{id}\odot\epz} & {\sC}(d,e)\odot d \ar[r]^-{\epz} & e.\\} \] 
\end{enumerate}

\begin{prop} There is an isomorphism between $\sV$-$\sC\!at/ \sD$ and the 
category whose objects consist of the data specified in (i) and (ii) above and whose
morphisms $\al\colon \sC\rtarr \sC'$ are given by maps $\al\colon \sC(d,e) \rtarr \sC'(d,e)$
such that the following diagrams commute (in $\sM$ and $\sV$ respectively).
\[ \xymatrix{
\sC(d,e) \odot d \ar[dr]_{\epz}\ar[rr]^-{\al\odot\text{id}} & & \sC'(c,d)\odot e \ar[dl]^{\epz'}\\
& e &\\} 
\ \ \ \mbox{and} \ \ \ 
\xymatrix{
& \mathbf{I} \ar[dl]_{\et}  \ar[dr]^{\et'}& \\
\sC(d,d) \ar[rr]_-{\al} & & \sC'(d,d)\\} \]
\end{prop}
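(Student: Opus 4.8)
The plan is to construct mutually inverse functors between $\sV\text{-}\sC\!at/\sD$ and the category $\mathcal{E}$ of ``evaluation data'' described by (i) and (ii), and to check that they are inverse to one another. In one direction, given a $\sV$-functor $\ga\colon\sC\rtarr\sD$ that is the identity on objects, the composite $\sC(d,e)\xrightarrow{\ga}\sD(d,e)=\ul{\sM}(d,e)$ has an adjoint $\epz\colon\sC(d,e)\odot d\rtarr e$ under the first adjunction of (\ref{biten}); the unit maps $\et\colon\mathbf{I}\rtarr\sC(d,d)$ are simply those of $\sC$. One must check that the associativity diagram in (i) and the unit diagram in (ii) commute. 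The key device, already flagged in the statement, is that these diagrams are equivalent under adjunction to diagrams built purely from the composition and unit of $\sC$ (the displayed ``composite of evaluation maps'' and the triangle with $\mathbf{I}\odot d$), and those commute because $\sC$ is a $\sV$-category. Conversely, given data as in (i) and (ii), one \emph{defines} $\mu$ and takes $\ga$ as the adjoint of $\epz$; one must verify that $(\sC,\mu,\et)$ satisfies the associativity and unit axioms of a $\sV$-category and that $\ga$ is then $\sV$-functorial, i.e.\ compatible with $\mu$ and $\et$. Again the point is that each axiom, after transporting along the adjunction (\ref{biten}) and using the isomorphism (\ref{trans}), becomes one of the evaluation-map diagrams in (i), (ii), or a diagram chase from them.

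Concretely, first I would set up notation and recall that the adjunction $\ul{\sM}(V\odot d,e)\iso\ul{\sV}(V,\ul{\sM}(d,e))$, hence also the underlying $\sM(V\odot d,e)\iso\sV(V,\ul{\sM}(d,e))$, gives a bijective correspondence $\epz\leftrightarrow\ga$ for each pair $(d,e)$; this is the object-level part of the claimed isomorphism of categories. Next I would treat the two directions of the correspondence on objects. Going from a $\sV$-functor to evaluation data: the associativity diagram of (i) is obtained from the associativity of composition in $\sC$ by applying $(-)\odot b$ and taking adjoints, using (\ref{trans}) to identify iterated tensors with iterated $\odot$; the unit diagram of (ii) comes similarly from the left unit law of $\sC$. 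Going from evaluation data to a $\sV$-functor: I would show the composition maps $\mu$ defined in (ii) are associative and unital, again by adjointing the relevant diagrams back to the evaluation-map diagrams that are assumed to commute (or shown to commute by the diagram chase indicated after (i)), so that $(\sC,\mu,\et)$ is a genuine $\sV$-category; then $\ga$, being the adjoint of $\epz$, is automatically compatible with $\mu$ (its defining diagram is exactly the adjoint of the $\mu$-definition) and with $\et$, hence is a $\sV$-functor over $\sD$.

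Then I would handle morphisms. A morphism in $\sV\text{-}\sC\!at/\sD$ is a $\sV$-natural transformation $\al\colon\sC\rtarr\sC'$, which since both are identities on objects amounts to maps $\al\colon\sC(d,e)\rtarr\sC'(d,e)$ compatible with $\mu$ and $\et$ and satisfying $\ga'\circ\al=\ga$. Under the adjunction, the condition $\ga'\circ\al=\ga$ translates precisely into the left-hand triangle $\epz'\circ(\al\odot\mathrm{id})=\epz$, and compatibility with $\et$ is the right-hand triangle verbatim; compatibility with $\mu$ then follows automatically (or is equivalent to these, by another adjunction argument using (\ref{trans})). So the morphism sets of the two categories are identified, and the identification is visibly functorial and compatible with the object-level bijection. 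Finally I would note that the two passages (functor $\to$ data, data $\to$ functor) are inverse to each other: starting from $\ga$, forming $\epz$, and re-adjointing returns $\ga$ by the triangle identities of the adjunction; starting from data, the $\mu$ and $\et$ recovered from the constructed $\sV$-functor are the original ones by construction. This yields the claimed isomorphism of categories.

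The main obstacle I anticipate is purely bookkeeping: carrying the coherence isomorphism (\ref{trans}) through the large associativity pentagon in (i) and matching it correctly against the associativity axiom of $\sC$, so that ``adjoint of the big diagram'' really is ``the little diagram'' with no stray coherence cells. This is the diagram chase the authors allude to right after displaying (i); it is routine but needs care, and I would organize it by naming each $\odot$-reassociation explicitly and checking the pentagon/triangle coherence of $\sM$ as a $\sV$-category is exactly what makes the two composites agree. Everything else (units, the morphism-level translation, invertibility) is a direct application of the adjunction bijection and two-out-of-three for commuting squares.
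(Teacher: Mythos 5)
Your proposal is correct and follows essentially the same route as the paper's (much terser) proof: transfer the data back and forth along the adjunction (\ref{biten}), using (\ref{trans}) to match the associativity/unit diagrams, and observe that the correspondence on objects carries over to morphisms. Your added observation that compatibility of $\al$ with $\mu$ follows automatically from the $\epz$-triangle (by comparing adjoints) is exactly the point the paper leaves implicit.
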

\begin{proof}  For an object $\sC$ of the category defined in the statement we easily verify from the given data that $\sC$
is a $\sV$-category with the specified unit and composition maps and that the maps $\ga$ 
together with the identity function on objects specify a $\sV$-functor $\sC\rtarr \sD$. 
Conversely, for a $\sV$-functor $\ga\colon \sC\rtarr \sD$ that is the identity on objects,
we obtain data as in (i) and (ii) by use of the adjunction (\ref{biten}). This correspondence
between objects carries over to a correspondence between morphisms.
\end{proof}

\subsection{Remarks on multiplicative structures}\label{mult}
Our results in this paper, like nearly all of the results in the literature on 
replacing given model categories by equivalent presheaf categories, ignores any given multiplicative structure on $\sM$.  The following 
observations give a starting point for a study of products, but we shall not 
pursue this further here. There are several problems.  For starters, the hypotheses in the
following remark are natural categorically, but they are seldom satisfied
in the applications. Moreover, the assumption here that $\de$ is op-lax clashes with
the conclusion that $\ga$ is lax in \myref{gammalax}. In practice, it cannot be
expected that either is strong symmetric monoidal.

\begin{rem} Suppose that $\sD$ is symmetric $\sV$-monoidal with product $\oplus$ and 
unit object $e$.  Then $\mathbf{Pre} (\sD,\sV)$ is symmetric $\sV$-monoidal with product $\otimes$
and unit object $\bY(e)$.  For $X,Y\in \mathbf{Pre} (\sD,\sV)$, we have the evident external product 
$\bar{\otimes}\colon \sD\otimes \sD\rtarr \sV$, which is given on objects by
$(X\bar{\otimes} Y)(b,c) = X(b)\otimes Y(c)$, and left Kan extension along $\oplus$
gives the product $X\otimes Y\in \mathbf{Pre} (\sD,\sV)$. It is characterized by the $\sV$-adjunction 
\[ \ul{\mathbf{Pre} }(\sD,\sV)(X\otimes Y, Z) \iso \ul{\mathbf{Pre} }(\sD\otimes \sD,\sV)(X\bar{\otimes} Y, Z\circ \oplus). \]
\end{rem}

\begin{rem} Now suppose further that $\sM$ is symmetric $\sV$-monoidal with product $\Box$ and unit
object $\mathbf{J}$ and that the $\sV$-functor $\de\colon \sD\rtarr \sM$ is op-lax 
symmetric $\sV$-monoidal, so that we are given a map $\ze \colon \de e\rtarr \mathbf{J}$ 
in $\sM$ and a natural $\sV$-map $$\psi\colon \de(c\oplus d) \rtarr \de c \Box \de d.$$  
Then the functor $\bU\colon \sM\rtarr \mathbf{Pre} (\sD,\sV)$ is lax symmetric monoidal and therefore 
its left adjoint $\bT$ is op-lax symmetric monoidal. The data showing this are a unit map
$\et\colon \bY(e) \rtarr \bU \mathbf{J}$ in $\mathbf{Pre} (\sD,\sV)$ and a natural $\sV$-map 
$\ph\colon \bU M\otimes \bU N \rtarr \bU(M\Box N)$.  Recall that 
$(\bU M)(d) =\ul{\sM}(\de d,M)$. The map $\et$ is given by the composite maps
\[ \bY(e)(d) = \sD(d,e) \rtarr \ul{\sM}(\de d,\de e) \rtarr \ul{\sM}(\de d,\mathbf{J}) \]
induced by $\de$ and $\ze$.  The natural $\sV$-map $\ph$ is adjoint to the natural 
$\sV$-map
\[  \bU M \bar{\otimes} \bU N \rtarr \bU (M \Box N)\circ \oplus \]
given by the composite maps
\[ \ul\sM(\de c,M) \otimes \ul\sM(\de d,N) 
\rtarr \ul\sM(\de c\Box \de d, M \Box N)
\rtarr \ul\sM(\de (c\oplus d), M\Box N) \]
induced by $\Box$ and $\ph$.
\end{rem}

\end{document}